\numberwithin{equation}{section} 
\newcounter{cont}[section] 
\newtheorem{thm}[cont]{Theorem}
\newtheorem{prop}[cont]{Proposition}
\newtheorem{lem}[cont]{Lemma}
\theoremstyle{definition}
 \theoremstyle{remark}
 \newtheorem{rem}[cont]{Remark}
\newcommand{\R}{\mathbb{R}}
\newcommand{\e}{\varepsilon}
\newcommand{\T}{T_{\max}}
\begin{document}
\baselineskip=16pt

\title[Motion of interfaces for a damped hyperbolic Allen--Cahn equation]{Motion of interfaces for a damped hyperbolic Allen--Cahn equation}

\author[R. Folino]{Raffaele Folino}
\address[Raffaele Folino]{Dipartimento di Ingegneria e Scienze dell'Informazione e Matematica, Universit\`a degli Studi dell'Aquila (Italy)}
\email{raffaele.folino@univaq.it}
	
\author[C. Lattanzio]{Corrado Lattanzio}
\address[Corrado Lattanzio]{Dipartimento di Ingegneria e Scienze dell'Informazione e Matematica, Universit\`a degli Studi dell'Aquila (Italy)}
\email{corrado@univaq.it} 
 
\author[C. Mascia]{Corrado Mascia}
\address[Corrado Mascia]{Dipartimento di Matematica,
Sapienza Universit\`a di Roma (Italy)}
\email{mascia@mat.uniroma1.it}

\keywords{Allen--Cahn equation; motion by mean curvature; energy estimates}

\maketitle


\begin{abstract} 
Consider the Allen--Cahn equation $u_t=\e^2\Delta u-F'(u)$, where $F$ is a double well potential with wells of equal depth, located at $\pm1$.
There are a lot of papers devoted to the study of the limiting behavior of the solutions as the diffusion coefficient $\e\to0^+$,
and it is well known that, if the initial datum $u(\cdot,0)$ takes the values $+1$ and $-1$ in the regions $\Omega_+$ and $\Omega_-$, 
then the \emph{interface} connecting $\Omega_+$ and $\Omega_-$ moves with normal velocity equal to the sum of its principal curvatures,
i.e. the interface moves by mean curvature flow.

This paper concerns with the motion of the inteface for a damped hyperbolic Allen--Cahn equation, in a bounded domain of $\R^n$, for $n=2$ or $n=3$.
In particular, we focus the attention on radially simmetric solutions, studying in detail the differences with the classic parabolic case,
and we prove that, under appropriate assumptions on the initial data $u(\cdot,0)$ and $u_t(\cdot,0)$, 
the interface moves by mean curvature as $\e\to0^+$ also in the hyperbolic framework.
\end{abstract}

\section{Introduction}\label{sec:intro}
The aim of this paper is to analyze the behavior of the solutions to the \emph{nonlinear damped hyperbolic Allen--Cahn equation}
\begin{equation}\label{eq:hypalca-intro}
	\tau u_{tt}+g(u)u_t=\e^2\Delta u-F'(u), \qquad \quad x\in\Omega, t>0,
\end{equation}
in a bounded domain $\Omega\subset\R^n$, $n=2$ or $3$, which has a $C^1$ boundary, 
with appropriate boundary conditions and initial data $u(\cdot,0)=u_0$ and $u_t(\cdot,0)=u_1$ in $\Omega$.
We will specify later the precise assumptions on the functions $F,g$;
from now, we say that $g$ is a (smooth) strictly positive function and $F$ is a double well potential with wells of equal depth.
The main example we have in mind is $F(u)=\frac14(u^2-1)^2$ and so 
the reaction term in the equation \eqref{eq:hypalca-intro} is equal to $u-u^3$.
The \emph{relaxation parameter} $\tau$ and the diffusion coefficient $\e$ are strictly positive
and we consider the case when $\e$ is small.
Indeed, our interest is in the limiting behavior of the solutions to \eqref{eq:hypalca-intro} as $\e\to0$.

Equation \eqref{eq:hypalca-intro} is a hyperbolic variant of the classic Allen--Cahn equation
\begin{equation}\label{eq:allencahn-multiD}
	u_t=\e^2\Delta u-F'(u),
\end{equation}
which is obtained from \eqref{eq:hypalca-intro} in the (formal) limit $\tau\to0$ when $g\equiv1$.
The latter equation is a classic reaction-diffusion equation with a reaction term of bistable type,
and it has been proposed in \cite{Allen-Cahn} to describe the motion of antiphase boundaries in iron alloys.
The reaction function $F$ has two global minimal points, that correspond to two stable stationary solutions of the equation \eqref{eq:allencahn-multiD}.
In this paper, we assume that the only global minimal points of $F$ are $-1$ and $+1$.

In general, reaction-diffusion equations are widely used to describe a variety of phenomena 
such as pattern formation and front propagation in biological, chemical and physical systems.
However, such equations undergo the same criticisms of the linear diffusion equation,
mainly concerning the infinite propagation speed of disturbances and lack of inertia.
There are many ways to overcome these unphysical properties;
one of them is to consider hyperbolic reaction-diffusion equations like \eqref{eq:hypalca-intro}.
In particular, substituting the classic Fick law with a relaxation relation of Maxwell--Cattaneo type, 
one obtains the hyperbolic reaction-diffusion equation \eqref{eq:hypalca-intro} with $g=1+\tau F''$.
For a complete discussion on the derivation of the model \eqref{eq:hypalca-intro} 
and on the physical or biological details see \cite{Cat,DunbOthm86,HerPav,Hillen,Holmes,JP89a,JP89b,MenFedHor}.  

As it was previously mentioned, we are interested in the limiting behavior of the solutions
as the diffusion coefficient $\e\to0^+$.
In the one dimensional case, it is well known that equation \eqref{eq:allencahn-multiD} exhibits the phenomenon of \emph{metastability}.
If we consider equation \eqref{eq:allencahn-multiD} in a bounded domain with appropriate boundary conditions,
then we have the persistence of unsteady structure for a very long time.
Indeed, the only stable states are the constant solutions $-1$ or $+1$ (the global minimal points of the potential $F$),
but it has been proved that if the initial profile has a $N$-transition layer structure, i.e. it is approximately constant to $-1$ or $+1$
except close to $N$ transition points, then the solution maintains that structure for an exponentially long time, 
namely a time proportional to $\exp\left(Al/\e\right)$, where $A$ is a positive constant depending only on $F$ 
and $l$ is the minimum distance between the transition points.
There are many papers devoted to the study of the metastability for the Allen--Cahn equation;
here we recall the fundamental contributions \cite{Bron-Kohn,Carr-Pego,Carr-Pego2,Chen2}.
In particular, in \cite{Carr-Pego} the authors studied in details the motion of the $N$ transition points and derived a system of ODE
describing their dynamics; the transition layers move with an exponentially small velocity and so we have the persistence of the 
transition layer structure for an exponentially long time.

Similar results are also valid for the one dimensional version of \eqref{eq:hypalca-intro},
and then we have the phenomenon of the metastability also in the hyperbolic framework \eqref{eq:hypalca-intro}.
The study of the metastable properties of the solutions and the differences with the classic parabolic case \eqref{eq:allencahn-multiD}
are performed in \cite{FolinoJHDE,FolinoDIE,FolinoHyp2016,FLM-CMS}.
In particular, in \cite{FLM-CMS} using a similar approach of \cite{Carr-Pego} it has been derived a system of ODE describing the motion of the transition points
and a comparison with the classic case is perfomed.
In conclusion, both equation \eqref{eq:hypalca-intro} and equation \eqref{eq:allencahn-multiD} exhibit the phenomenon of metastability in the one dimensional case:
in both cases we have persistence of a transition layer structure for an exponentially long time and
the dynamics of such solutions is described by a finite dynamical system.

This paper concerns with the multidimensional case, where the situation is rather different.
Indeed, in this case we have to study the motion of ``transition surfaces'' instead of transition points.
There is vast literature of works about motion of interfaces in several space dimensions for the Allen--Cahn equation \eqref{eq:allencahn-multiD}, 
where the effect of the curvature of the interfaces turns out to be relevant for the dynamics, 
and it has been shown that steep interfaces are generated in a short time with subsequent motion governed by mean curvature flow.
It is impossible to quote all the contributions; 
without claiming to be complete, we recall the papers \cite{Bron-Kohn2, Chen2, demot-sch,Ev-So-Sou}.
The behavior of the solutions to equation \eqref{eq:allencahn-multiD} for $\e$ small can be described as follows:
for a short time the solution $u^\e$ behaves as if there were no diffusion, i.e. $\e=0$, 
and so, $u^\e\approx\pm1$ according to the sign of the initial datum.  
Therefore, we can divide the domain where we are considering the equation in three different regions:
two regions $\Omega_+$, $\Omega_-$ where $u^\e\approx+1$ and $u^\e\approx-1$, respectively,
and a ``thin'' region $\Omega_0$ which connects $\Omega_+$ and $\Omega_-$.
The region $\Omega_0$ is usually referred as \emph{interface} and the process described above is called \emph{generation of interface}.
After this phase of the dynamics, if $x$ is away of the interface, the diffusion term $\e^2\Delta u$ can still be neglected,
and $u^\e$ takes the values $\pm1$ in $\Omega_\pm$.
On the other hand, close to the interface, when the gradient of $u^\e$ is large enough,
the diffusion term plays a crucial role: it balances the reaction term $-F'$ and we have the \emph{propagation of the interface}.
In this phase, the mean curvature $K$ of the interface plays a fundamental role, 
indeed the interface propagates with normal velocity proportional to the mean curvature $K$, namely 
\begin{equation}\label{eq:meancurvature}
	V=\e^2 K,
\end{equation}
where $V$ is the normal velocity of the interface, and the mean curvature $K$ is the sum of its principal curvatures.
The link between the equation \eqref{eq:allencahn-multiD} and the motion by mean curvature was firstly observed
by Allen and Cahn in \cite{Allen-Cahn} on the basis of a formal analysis.
Another formal asymptotic expansion is performed in \cite{RubSteKel}.
In \cite{Bron-Kohn2, Chen2, demot-sch,Ev-So-Sou} the authors studied in details the process described above 
and proved rigorously that the formal analysis is correct.
In particular, in \cite{Bron-Kohn2} the authors consider a rescaled version of \eqref{eq:allencahn-multiD} with $F(u)=\frac14(u^2-1)^2$,
namely
\begin{equation}\label{eq:allencahn-rescaled}
	u_t=\Delta u+\e^{-2}(u-u^3),
\end{equation}
with appropriate boundary conditions and initial data, and they present two rigorous results.
Firstly, they prove a compactness theorem: as $\e\to0$, the solution $u^\e$ is in a certain sense compact as function of space-time and 
the limit is a function assuming only the values $\pm1$.
Secondly, they focused the attention on radially symmetric solutions, and proved that 
if $\Omega$ is a ball, the initial datum is radial with one transition sphere between $-1$ and $+1$ at $r=\rho_0$,
and the boundary conditions are of Dirichlet type, then the transition at time $t$ is $r=\rho(t)$, where $\rho$ satisfies
\begin{equation}\label{eq:meancurvature_sphere}
	\rho'=-\frac{n-1}{\rho}, \qquad \rho(0)=\rho_0.
\end{equation}
Therefore, they show that the motion of the interface is governed by mean curvature flow in the case of radial solution.
Indeed, it is well known that the evolution by mean curvature for general spheres in $\mathbb R^n$ is governed by the law \eqref{eq:meancurvature_sphere}
and the sphere shrinks into a point in finite time.
The scaling of the equation \eqref{eq:allencahn-multiD} has been chosen so that the associated motion by mean curvature takes place on a time scale of order one,
and so the sphere shrinks into a point in finite time which does not depend on $\e$.
This implies that the solution of \eqref{eq:allencahn-multiD} has one transition between $-1$ and $+1$ for a time proportional to $\e^{-2}$,
and then we have a fundamental difference with respect to the one dimensional case, where the solution maintains the transition layer structure
for an exponentially long time.
We remark that, in the case of the rescaled version \eqref{eq:allencahn-rescaled}, the law for the normal velocity \eqref{eq:meancurvature} becomes
\begin{equation}\label{eq:meancurvature-rescaled}
	V=K,
\end{equation}
where $K$ is again the mean curvature of the interface.
From now on, \emph{faster time scale} is referred to the rescaled version \eqref{eq:allencahn-rescaled},
and  \emph{slower time scale} to \eqref{eq:allencahn-multiD}.
Therefore, in the faster time scale the interface propagates with normal velocity equal to \eqref{eq:meancurvature-rescaled},
whereas in the slower time scale with normal velocity equal to \eqref{eq:meancurvature}.

The contributions \cite{Chen2, demot-sch,Ev-So-Sou} deal with the equation \eqref{eq:allencahn-multiD} in the whole space, without the assumption of radial symmetry.
Chen \cite{Chen2} studies generation and propagation of the interface, showing that in the faster time scale, 
the interface develops in a short time $O(\e^2|\ln\e|)$ and disappears in a finite time.
De Mottoni and Schatzman \cite{demot-sch} obtain similar results by means of completely different techniques;
they consider the slower time scale with an initial data which  has an interface, 
and study the motion of the interface giving an asymptotic expansion of arbitrarily high order and error estimates valid up to time $O(\e^{-2})$. 
At lowest order, the interface evolves normally, with a velocity proportional to the mean curvature.
All the previous papers treat the dynamics of the solutions before the appearance of geometric singularities;
the main accomplishment of \cite{Ev-So-Sou} is the verification of the fact that the interface evolves according to mean curvature motion for all positive time, 
and so even beyond the time of appearance of singularities.
In the latter paper, the motion is interpreted in the generalized sense of Evans--Spruck \cite{Ev-Sp} 
and Chen--Giga--Goto \cite{Ch-Gi-Go} after the onset of geometric singularities.
Let us stress that the proofs of \cite{Chen2, demot-sch,Ev-So-Sou} rely heavily on the maximum principle for parabolic equation. 

The aforementioned bibliography is confined to the parabolic case \eqref{eq:allencahn-multiD}.
To the best of our  knowledge, the only paper devoted to the study to the same problem for hyperbolic variations of \eqref{eq:allencahn-multiD}
is \cite{Hil-Nara}, where the authors study the singular limit of \eqref{eq:hypalca-intro} when $g$ is constant, in the whole space $\mathbb{R}^n$ for $n=2$ or $n=3$. 
The authors derive estimates for the generation and the propagation of interfaces and prove that the motion is governed by mean curvature flow
in the limit $\e\to0$ under the assumption that the damping coefficient is sufficiently strong.
Their proofs use a comparison principle for a damped wave equation and a construction of suitable subsolutions and supersolutions.
The comparison principle is obtained by expressing the solutions by Kirchhoff's formula and estimating them.

In this paper, we study the propagation of the interface of \eqref{eq:hypalca-intro} in a bounded domain, by following the approach introduced in \cite{Bron-Kohn2}.
Therefore, after rescaling the equation to study the motion of the interface on a time scale of order one, we first prove a compactness theorem, Theorem \ref{thm:compactness}, 
valid for any sufficiently regular domain $\Omega$, any positive function $g$ and for appropriate boundary conditions of Dirichlet or Neumann type.
Next, we focus the attention on the radial case with $g\equiv1$ and Dirichlet boundary conditions as in \cite{Bron-Kohn2}.
As an intermediate result, we will prove that for some radially symmetric solutions with one transition sphere at time $t=0$, 
the motion of the transition sphere can be described by the ODE
\begin{equation}\label{eq:rho-intro}
	\e^2\tau\rho''+\rho'=-\frac{n-1}{\rho}.
\end{equation}
As we will see in Section \ref{sec:radial}, equation \eqref{eq:rho-intro} allows us to prove that the interface moves by mean curvature as $\e\to0$.
Thus, in the hyperbolic framework \eqref{eq:hypalca-intro},
we have to take into account the inertial term $\e^2\tau\rho''$, involving also the small parameter $\e$.

Let us now show some numerical solutions of the equation \eqref{eq:hypalca-intro}
where $F(u)=\frac14(u^2-1)^2$ and $\Omega=\left\{x\in \R^2 : |x|\leq1\right\}$,
with Dirichlet boundary conditions $u(x,t)=1$ for all $t\geq0$ on $\partial\Omega$.
The initial datum is as in Figure \ref{fig:t=0}, it is smooth and has the transition at $\rho_0=0.6$.
Precisely, the initial datum $u_0(r)$ is equal to $+1$ when $r>0.6$ (red region), and it is equal to $-1$ in the blue region.
\begin{figure}[htbp]
\centering
\includegraphics[scale=.2]{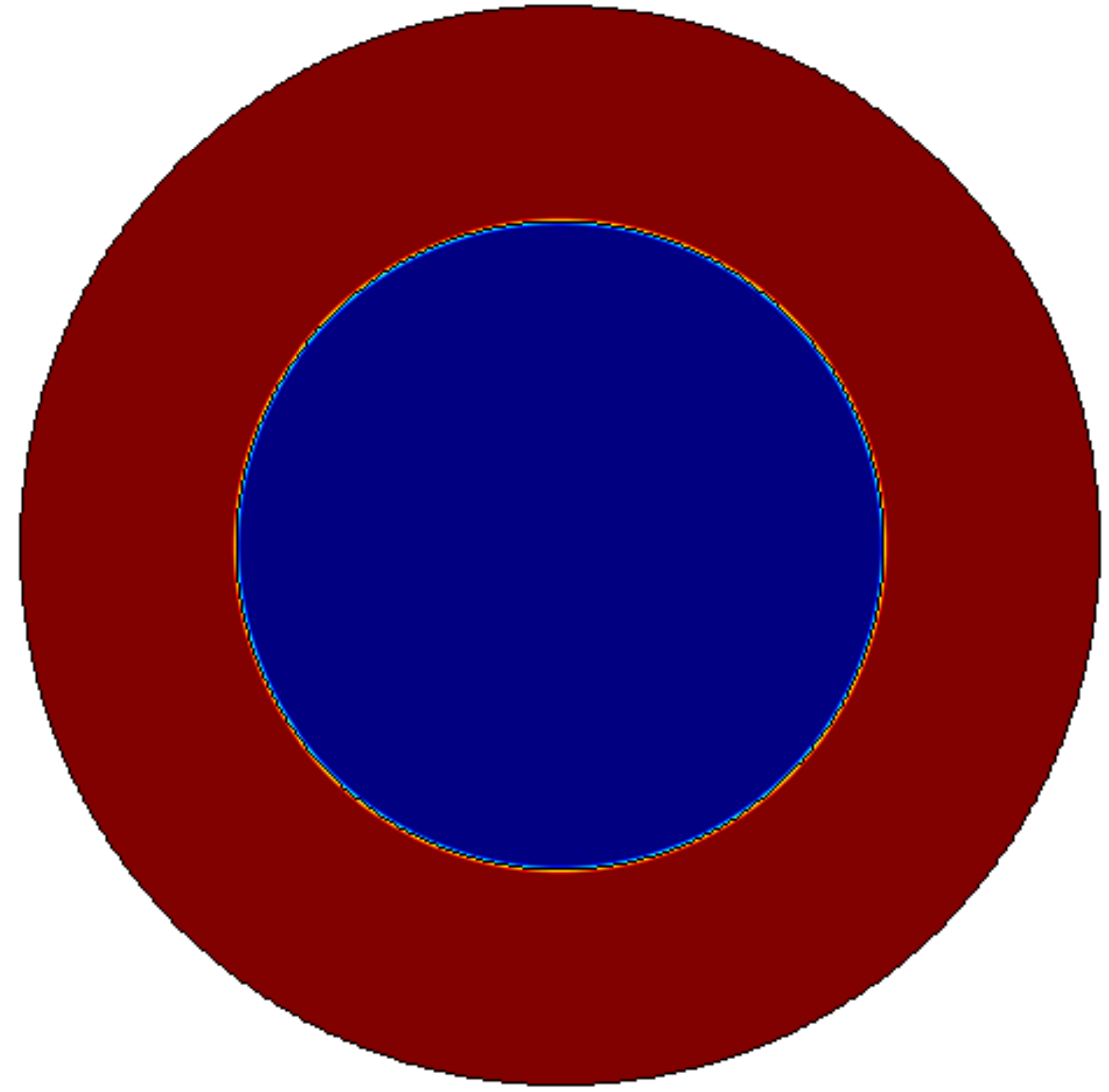}
\caption{Initial datum $u_0$ with transition at $\rho_0=0.6$.}
\label{fig:t=0}
\end{figure}

In the following examples, we choose the parameter $\tau=1$ and the initial velocity $u_1\equiv0$.
Firstly, we consider $\e=0.02$ and show the solution for different values of $t$ in Figure \ref{fig:eps=0.02}.
We see that the solution maintains the interface until the time $t=450$.
\begin{figure}[htbp]
\centering
\includegraphics[scale=.25]{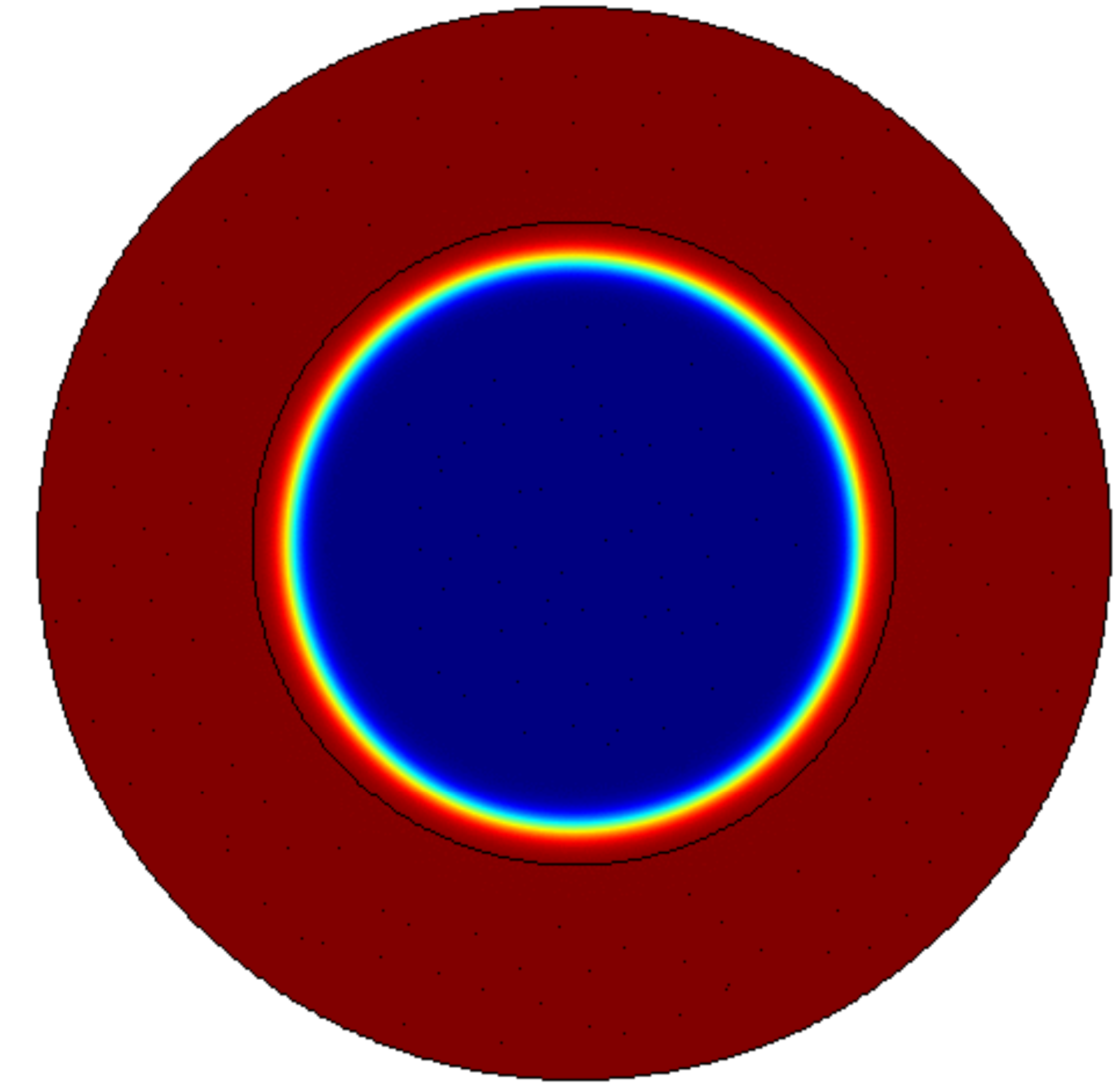}\qquad \quad
\includegraphics[scale=.25]{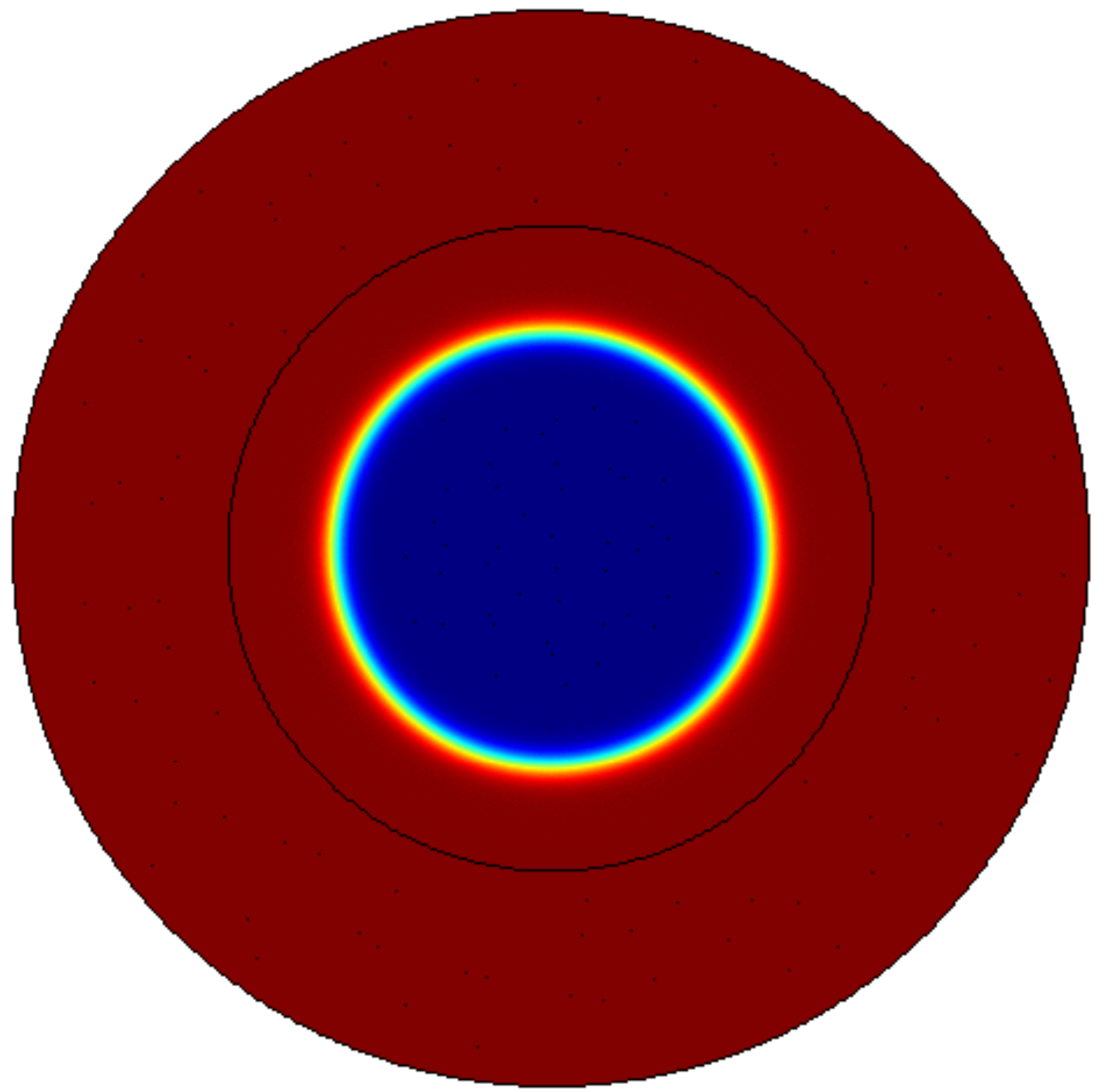}\\ \vspace{1cm}
\includegraphics[scale=.25]{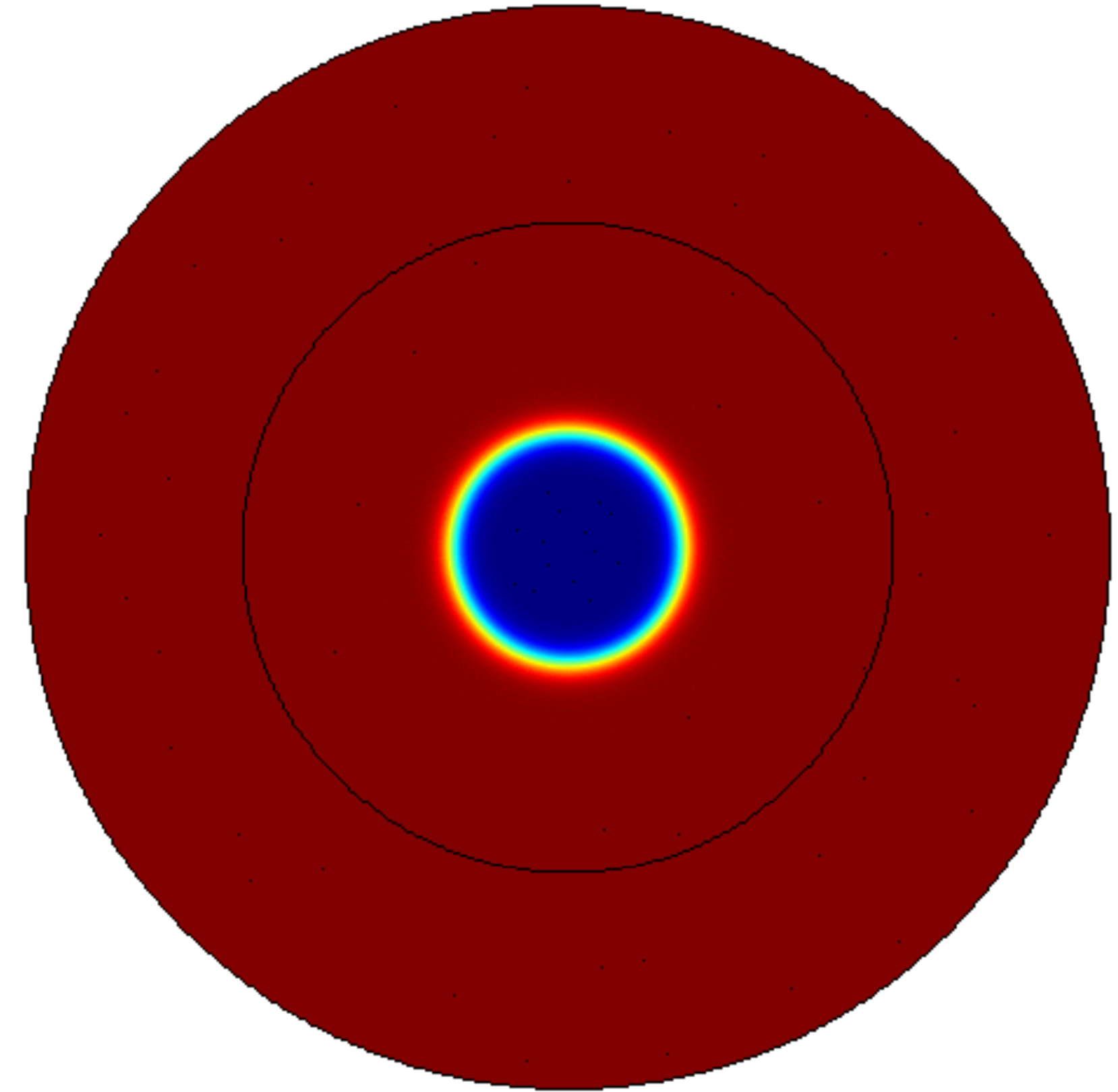}\qquad \quad
\includegraphics[scale=.25]{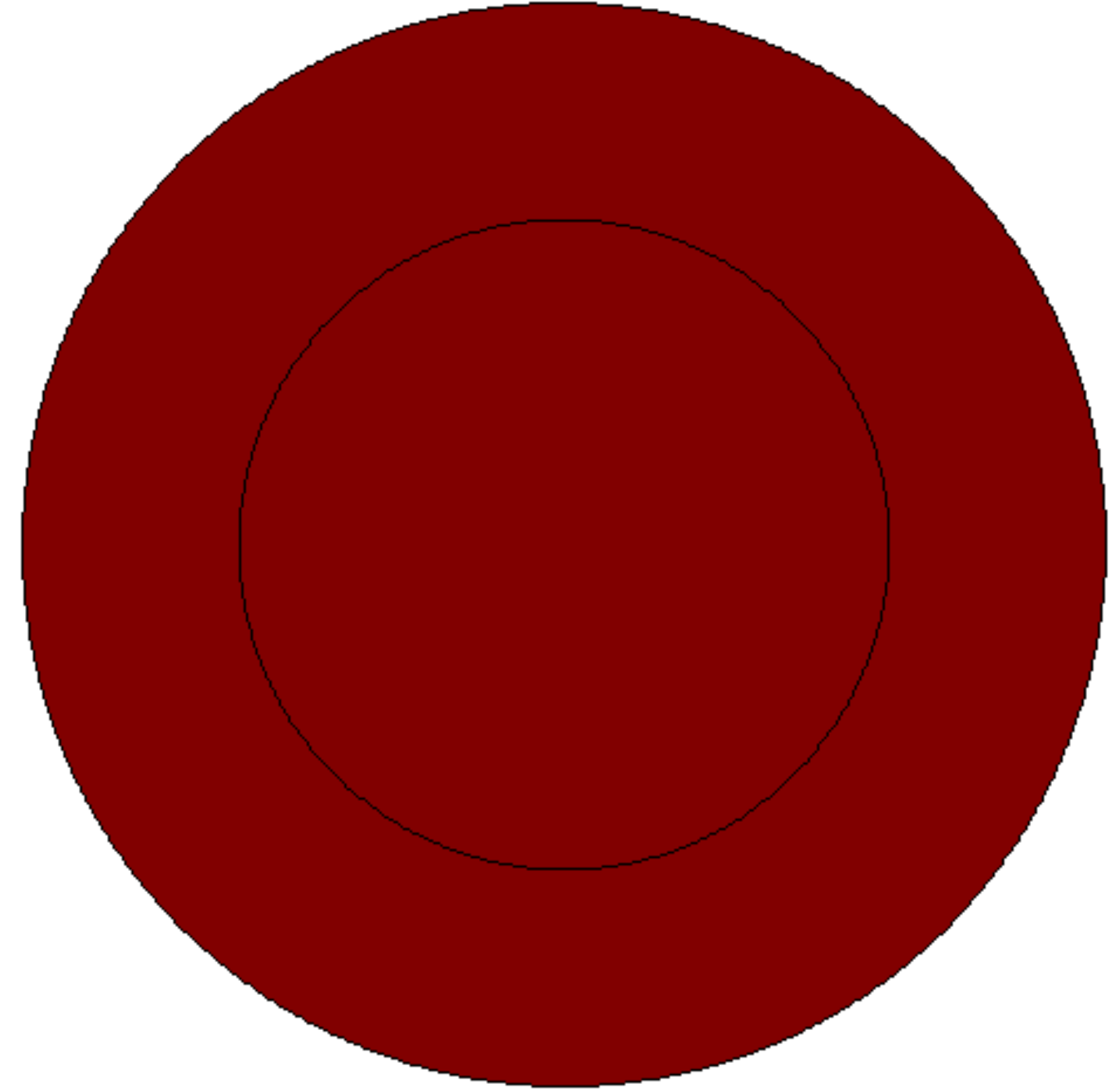}
\caption{Solution for $\tau=1$, $\e=0.02$ and different values of $t$. Top left: $t=100$, top right: $t=250$, bottom left: $t=400$, bottom right: $t=450$. The initial datum is as in Figure \ref{fig:t=0}.}
\label{fig:eps=0.02}
\end{figure}

Next, we take $\e=0.01$ and, since we are considering equation \eqref{eq:hypalca-intro} and then the slower time scale,
the evolution of the solution is slower than the previous case (see Figure \ref{fig:eps=0.01_t=250}).
\begin{figure}[htbp]
\centering
\includegraphics[scale=.25]{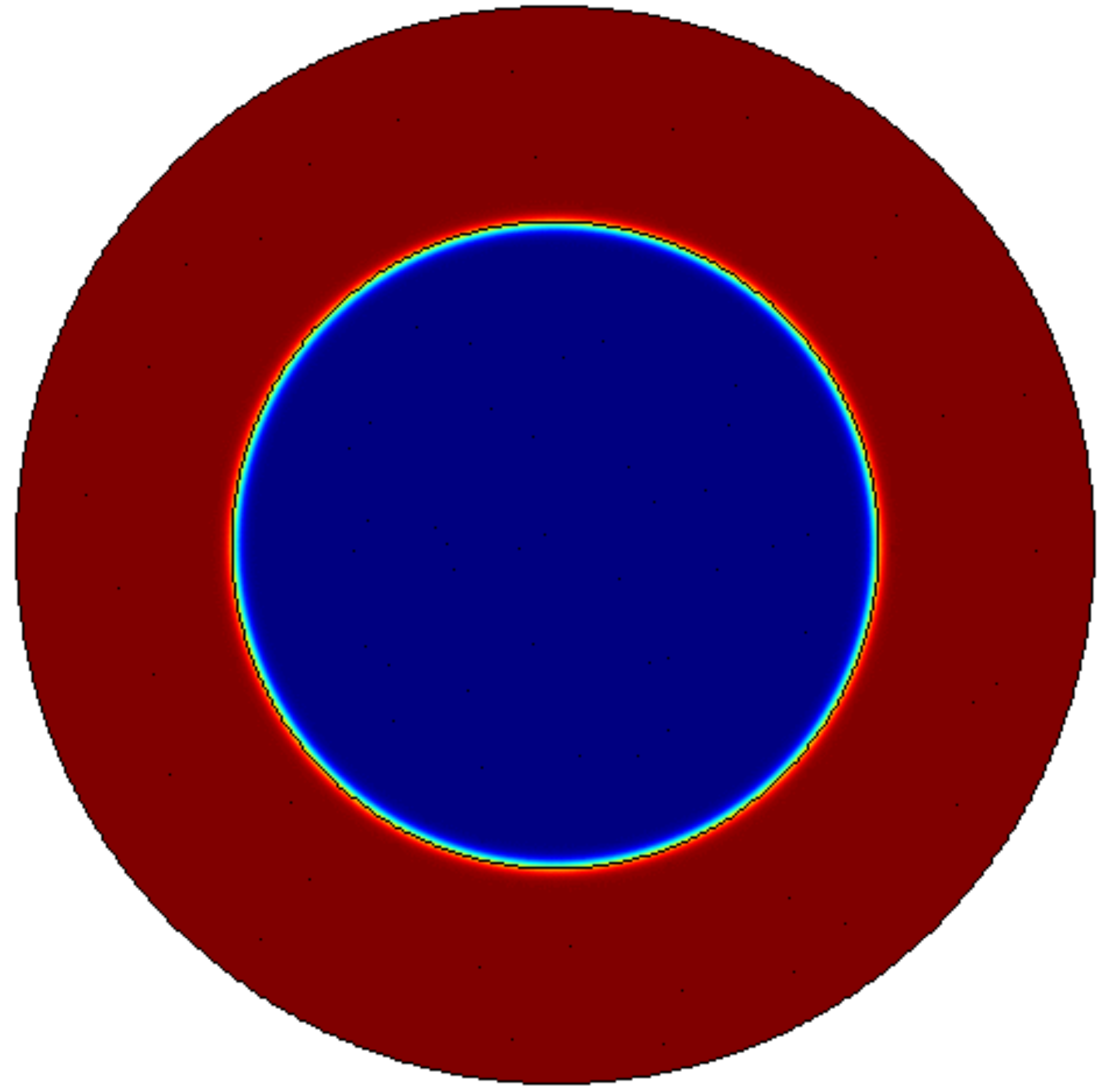}\qquad\quad
\includegraphics[scale=.25]{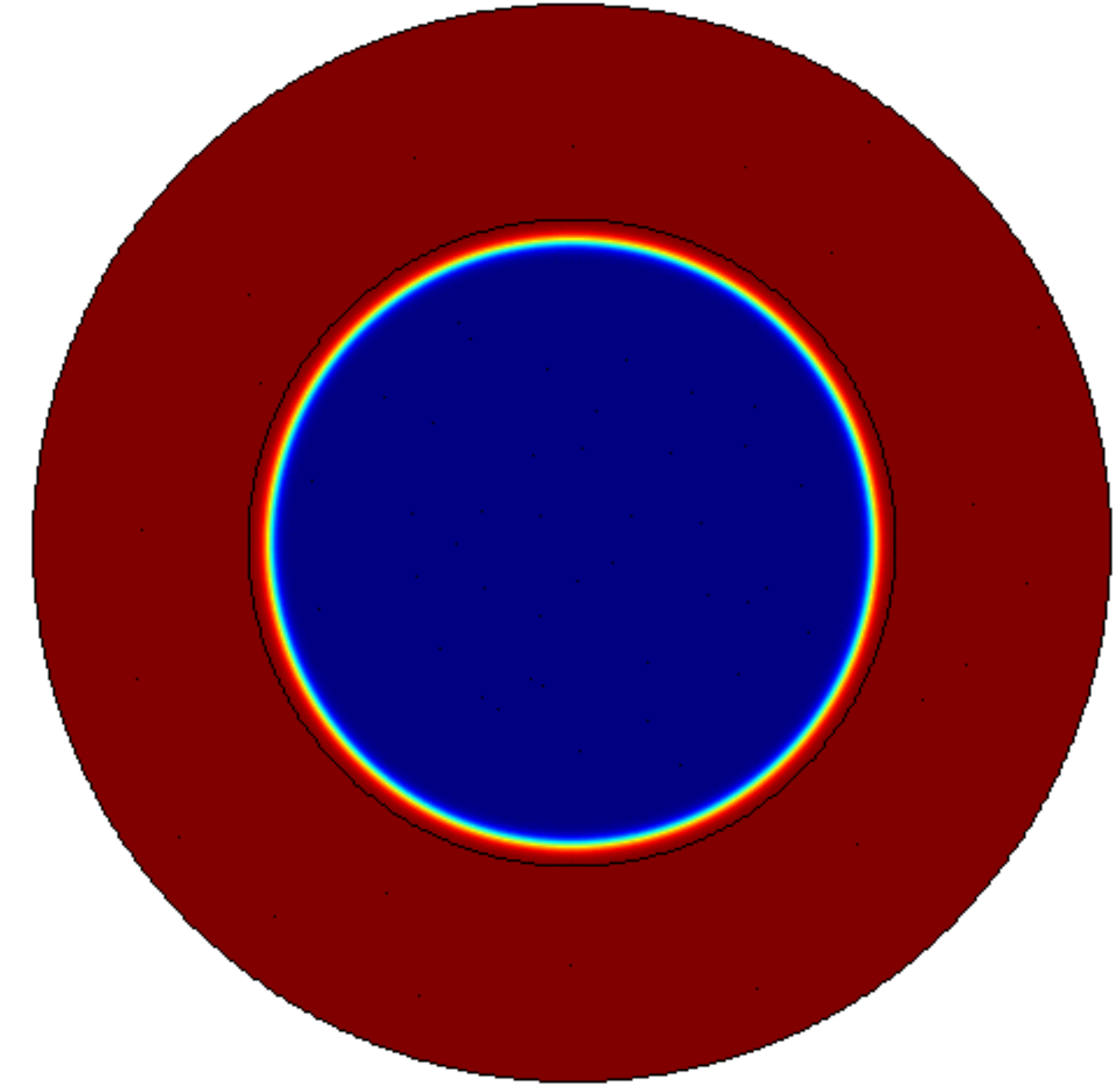}
\caption{Solution for $\tau=1$, $\e=0.01$ and different values of $t$. Left: $t=50$, right: $t=250$. The initial datum is as in Figure \ref{fig:t=0}.}
\label{fig:eps=0.01_t=250}
\end{figure}

The goal of this paper is to rigorously describe the behavior of the solutions shown in Figures \ref{fig:eps=0.02} and \ref{fig:eps=0.01_t=250}; 
we consider the faster time scale, and so a rescaled version of \eqref{eq:hypalca-intro} as in \cite{Bron-Kohn2}, 
to study the motion of the interface on a time scale of order one.
The interface will disappear in a finite time (independent on $\e$) and we study the motion of the interface before it disappears.
To this aim, we consider well-prepared initial data: an initial datum $u_0$ which makes the transition from $-1$ to $+1$ in an ``energetical efficient'' way, 
and an initial velocity $u_1$ sufficiently small in an appropriate sense; for details, see assumptions in Section \ref{sec:radial} and Remark \ref{rem:initial}.

Our results confirm that the motion of the interface is governed by mean curvature flow 
as $\e\to0$ in the radial case and for $g\equiv1$, see Theorem \ref{cor:main}.
In general, a formal computation shows that the interfaces move by mean curvature for any strictly positive function $g$ without the assumption of radially symmetric solutions.
In particular, we will (formally) show that the normal velocity satisfies 
\begin{equation}\label{eq:meancurvature-g}
	\overline{g}\,V=K,
\end{equation} 
in the limit $\e\to0$, where $K$ is the mean curvature of the interface and
\begin{equation*}
	\overline{g}:=\frac{1}{\|\sqrt{F}\|_{{}_{L^1}}}\int_{-1}^{1} \sqrt{F(s)}\,g(s)\,ds.
\end{equation*}
Thus, in the case $g\equiv1$, the asymptotic limit \eqref{eq:meancurvature-g} is equal to \eqref{eq:meancurvature-rescaled} in the faster time scale.

The rest of the paper is organized as follows.
In Section \ref{sec:lim} we consider the IBVP for equation \eqref{eq:hypalca-intro} with a generic strictly positive function $g$, 
in a generic domain $\Omega$ and with boundary conditions of Dirichlet or Neumann type.
The main result of the section is Theorem \ref{thm:compactness}, that is the compactness theorem we discussed above.
Moreover, in Section \ref{sec:lim} we deduce an estimate on the $H^1$--norm of the time derivative $u_t$ of the solution,
that we will use in the study of the radial case (see Proposition \ref{prop:higher}).
Finally, Section \ref{sec:lim} contains the formal computation suggesting that the motion of the interface is governed by mean curvature flow as $\e\to0$,
and that the normal velocity satisfies \eqref{eq:meancurvature-g} in the asymptotic limit $\e\to0$.

In Section \ref{sec:radial}, we focus the attention on the radially symmetric solutions
in the case of damping coefficient $g\equiv1$ with boundary conditions of Dirichlet type,
and prove that the interface moves by mean curvature flow in the singular limit $\e\to0$,
see Theorems \ref{thm:main} and \ref{cor:main}.

\section{Limiting behavior as $\e\to0$ in the general case}\label{sec:lim}
Rescale equation \eqref{eq:hypalca-intro} and consider the \emph{hyperbolic reaction-diffusion equation}
\begin{equation}\label{eq:hypalca-multiD}
	\e^2\tau u_{tt}+g(u)u_t=\Delta u+\e^{-2}f(u), \qquad \quad x \in\Omega, t>0,
\end{equation}
in a bounded domain $\Omega\subset\R^n$, $n=2$ or $3$, with a $C^1$ boundary, 
where $f,g:\R\rightarrow\R$ are regular functions satisfying appropriate assumptions, that will be specified later.
Equation \eqref{eq:hypalca-multiD} is complemented with initial data
\begin{equation}\label{eq:initialdata}
	u(x,0)=u_0(x), \qquad u_t(x,0)=u_1(x), \qquad \quad x\in\Omega,
\end{equation}
and appropriate boundary conditions.
Precisely, we consider either Neumann 
\begin{equation}\label{eq:neumann}
	\frac{\partial u}{\partial n}(x,t)=0, \qquad \, x\in\partial\Omega, \, t>0,
\end{equation}
or Dirichlet type boundary conditions
\begin{equation}\label{eq:dirichlet}
	u(x,t)=\pm1, \qquad \, x\in\partial\Omega, \, t>0.
\end{equation}
In the latter case, we assume that at the boundary $u$ takes values in $\{-1,+1\}$ in a way such that the solution is sufficiently regular.
In this section, we collect some results on the behavior of the solutions to \eqref{eq:hypalca-multiD} as $\e\to0^+$,
valid for any regular bounded domain $\Omega$ and any strictly positive function $g$.
Let us start with some comments on the well-posedness of the IBVPs introduced above.

\subsection{Existence framework}
Let us introduce the energy functional
\begin{equation}\label{eq:energy}
	E_\e[u,u_t](t):=\int_\Omega\left[\frac{\e^3\tau}{2} u^2_t(x,t)+\frac{\varepsilon}2 |\nabla u(x,t)|^2+\e^{-1}F(u(x,t))\right]dx,
\end{equation}
where $F'=-f$ and denote by $E_\e[u_0,u_1]:=E_\e[u,u_t](0)$.
Thanks to the boundary conditions \eqref{eq:neumann} or \eqref{eq:dirichlet}, 
we can state that the energy functional \eqref{eq:energy} is a non-increasing function of time along (sufficiently regular) solutions to \eqref{eq:hypalca-multiD}.
Precisely, we have the following result.

\begin{lem}
Let $(u,u_t)\in C\left([0,T],H^2(\Omega)\times H^1(\Omega)\right)$ be a solution to \eqref{eq:hypalca-multiD} with $f,g:\R\to\R$, 
$f=-F'$ for some $F:\R\to\R$ and either Neumann \eqref{eq:neumann} or Dirichlet \eqref{eq:dirichlet} boundary conditions.
Then, for any $0\leq t_1<t_2\leq T$ 
\begin{equation}\label{variazione-energia}
	\e\int_{t_1}^{t_2}\int_\Omega g(u)u^2_t dxdt=E_\e[u,u_t](t_1)-E_\e[u,u_t](t_2).
\end{equation}
\end{lem}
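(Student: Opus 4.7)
The plan is the standard energy-identity computation: multiply the PDE by $\varepsilon u_t$, integrate over $\Omega\times[t_1,t_2]$, and then recognize every term (except the damping one) as a total time derivative of the corresponding piece of $E_\varepsilon$.

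More concretely, starting from $\varepsilon^2\tau u_{tt}+g(u)u_t=\Delta u+\varepsilon^{-2}f(u)$, I would multiply through by $\varepsilon u_t$ and observe the following pointwise identities. The inertial term gives $\varepsilon^3\tau\,u_{tt}u_t=\tfrac{d}{dt}\bigl(\tfrac{\varepsilon^3\tau}{2}u_t^2\bigr)$. For the reaction term, since $f=-F'$, one gets $\varepsilon^{-1}f(u)u_t=-\tfrac{d}{dt}\bigl(\varepsilon^{-1}F(u)\bigr)$. For the Laplacian term, I would rewrite $\varepsilon\,\Delta u\cdot u_t=\varepsilon\,\nabla\!\cdot\!(u_t\nabla u)-\varepsilon\nabla u\cdot\nabla u_t$ and note that $\varepsilon\nabla u\cdot\nabla u_t=\tfrac{d}{dt}\bigl(\tfrac{\varepsilon}{2}|\nabla u|^2\bigr)$. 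Collecting these gives the pointwise identity
\begin{equation*}
\frac{d}{dt}\Bigl[\tfrac{\varepsilon^3\tau}{2}u_t^2+\tfrac{\varepsilon}{2}|\nabla u|^2+\varepsilon^{-1}F(u)\Bigr]+\varepsilon\,g(u)u_t^2=\varepsilon\,\nabla\!\cdot\!(u_t\nabla u).
\end{equation*}

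Next, I would integrate over $\Omega$ and apply the divergence theorem to the right-hand side, producing the boundary term $\varepsilon\int_{\partial\Omega}u_t\,\tfrac{\partial u}{\partial n}\,dS$. Under Neumann conditions \eqref{eq:neumann} this vanishes because $\partial u/\partial n=0$; under Dirichlet conditions \eqref{eq:dirichlet}, $u$ is time-independent on $\partial\Omega$ so $u_t=0$ there and the boundary integral again vanishes. Integrating the resulting ODE in $t$ from $t_1$ to $t_2$ yields \eqref{variazione-energia}.

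The only non-routine point is justifying that these manipulations are legitimate at the assumed regularity $(u,u_t)\in C([0,T];H^2(\Omega)\times H^1(\Omega))$. Solving the PDE for $u_{tt}$ gives $u_{tt}=(\varepsilon^2\tau)^{-1}(\Delta u+\varepsilon^{-2}f(u)-g(u)u_t)$, which lies in $L^2(\Omega)$ uniformly in $t$ (using that $H^2\hookrightarrow C^0$ in dimensions $n=2,3$, so $f(u),g(u)$ are bounded), so all products above are at least in $L^1(\Omega\times[t_1,t_2])$ and the integrations by parts are standard by density. I expect this minor regularity bookkeeping—not the algebra—to be the only subtle point, and it is easily handled by a standard mollification/approximation argument if needed.
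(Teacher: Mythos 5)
Your proposal is correct and is essentially the paper's own argument: multiply by $u_t$ (the paper multiplies by $\e$ only at the very end, a cosmetic difference), integrate over $\Omega\times[t_1,t_2]$, integrate the Laplacian term by parts, and kill the boundary term using $\partial u/\partial n=0$ (Neumann) or $u_t=0$ on $\partial\Omega$ (Dirichlet, since the boundary data are time-independent). The regularity bookkeeping you flag is also handled the same way implicitly in the paper, so no further comment is needed.
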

\begin{proof}
Multiplying \eqref{eq:hypalca-multiD} by $u_t$ and integrate on $\Omega\times[t_1,t_2]$, we infer
\begin{equation*}
	\int_{t_1}^{t_2}\int_\Omega\left(\e^2\tau u_tu_{tt}+g(u)u^2_t\right)dxdt=\int_{t_1}^{t_2}\int_\Omega\left(u_t\Delta u-\e^{-2}F'(u)u_t\right)dxdt.
\end{equation*}
Integrating by parts and using the boundary conditions \eqref{eq:neumann} or \eqref{eq:dirichlet} we deduce
\begin{equation*}
	\int_\Omega u_t\Delta u\,dx=\int_{\partial\Omega}u_t\frac{\partial u}{\partial n}\,d\sigma-\int_\Omega\nabla u\cdot\nabla u_t\,dx=-\frac d{dt}\int_\Omega\frac12|\nabla u|^2\,dx.
\end{equation*}
Since $u_tu_{tt}=\partial_tu_t^2/2$ and $F'(u)u_t=\partial_t F(u)$, we have
\begin{align*}
	\int_{t_1}^{t_2}\int_\Omega g(u)u^2_t dxdt &=\int_\Omega\left[\frac{\e^2\tau}2 u^2_t(x,t_1)-\frac{\e^2\tau}2u^2_t(x,t_2)\right]dx\\
	&+\int_\Omega\left[\frac12 |\nabla u(x,t_1)|^2-\frac12 |\nabla u(x,t_2)|^2\right]dx\\
	&+\int_\Omega\left[\e^{-2}F(u(x,t_1))-\e^{-2}F(u(x,t_2))\right]dx. 
\end{align*}
Multiplying by $\e$ and using the definition \eqref{eq:energy}, we obtain \eqref{variazione-energia}.
\end{proof}
In the rest of the paper, we shall consider a framework where the equality \eqref{variazione-energia} is satisfied,
and we assume that $(u,u_t)\in C\left([0,\infty),H^2(\Omega)\times H^1(\Omega)\right)$ is the solution to
\eqref{eq:hypalca-multiD}-\eqref{eq:initialdata} with Neumann boundary conditions \eqref{eq:neumann}, 
and $(u,u_t)\in C\left([0,\infty),H^2(\Omega)\times H^1_0(\Omega)\right)$ is the one with Dirichlet boundary conditions \eqref{eq:dirichlet}.

The complete discussion of the well-posedness of the IBVPs for equation \eqref{eq:hypalca-multiD} is beyond the scope of this paper.
However, we show how to obtain the existence of a unique solution $(u,u_t)\in C\left([0,\infty),H^2(\Omega)\times H^1(\Omega)\right)$
in the case $g\equiv1$ and with homogeneous Neumann boundary conditions 
by means of the classical semigroup theory for solutions of differential equations on Hilbert spaces (see \cite{Pazy} or \cite{Cazenave}).
Setting $y=(u,v)=(u,u_t)$, we rewrite \eqref{eq:hypalca-multiD} as  
\begin{equation}\label{eq:primo-ordine}
	y_t=Ay+\Phi(y),
\end{equation}
where 
\begin{equation}\label{eq:A-Fi}
	Ay:=\left(\begin{array}{cc} 0 & \,1 \\ \varepsilon^{-2}\tau^{-1}\Delta & \, 0 \end{array} \right)y-y
	\quad \textrm{and}\quad \Phi(y):=y+\dfrac1{\e^2\tau}\left(\begin{array}{c} 0 \\ f(u)-v \end{array}\right),
\end{equation}
and the operator $A:D(A)\subset X\rightarrow X$, with $X:=\left\{(u,v)\in H^1(\Omega)\times L^2(\Omega)\right\}$, is defined in the domain 
\begin{equation}\label{eq:dom-neu}
	D(A):=\left\{(u,v)\in H^2(\Omega)\times H^1(\Omega) : \frac{\partial u}{\partial n}=0 \; \mbox{ in }\,\partial\Omega \right\}.
\end{equation}
It can be shown (following, for example, \cite{FolinoJHDE} where the one-dimensional version of \eqref{eq:hypalca-multiD} is studied) that the linear operator $A$
defined by \eqref{eq:A-Fi}-\eqref{eq:dom-neu} is m-dissipative with dense domain, and so from the Lumer--Phillips Theorem, it follows that 
it is the generator of a contraction semigroup $(S(t))_{t\geq0}$ in $X$.
Hence, there exists a unique {\it mild solution} on $[0,T]$ of \eqref{eq:primo-ordine} with initial condition $y(0)=x\in X$,
that is a function $y\in C([0,T],X)$ solving the problem 
\begin{equation*}
	y(t)=S(t)x+\int_0^t S(t-s)\Phi(y(s))ds, \qquad \forall\,t\in[0,T],
\end{equation*}
if the function $\Phi$ defined by \eqref{eq:A-Fi} is a Lipschitz continuous function on bounded subsets of $X$,
namely
\begin{equation}\label{eq:Phi-Lip}
	\|\Phi(y_1)-\Phi(y_2)\|_X\leq C(M)\|y_1-y_2\|_X,
\end{equation}
for all $y_i\in X$ with $\|y_i\|_X\leq M$, $i=1,2$.
The latter condition holds if we assume that there exists a positive constant $C>0$ such that the function $f$ satisfies
\begin{equation}\label{eq:hyp-f}
	|f(x)-f(y)|\leq C(1+|x|^\alpha+|y|^\alpha)|x-y|, \qquad \forall\, x,y\in\mathbb{R},
\end{equation}
for some $\alpha>0$ in the case $n=2$ and for some $\alpha\in[0,2]$ in the case $n=3$.
Indeed, let $y_1=(u_1,v_1)$ and $y_2=(u_2,v_2)$, with $u_i\in H^1(\Omega)$ and $v_i\in L^2(\Omega)$, for $i=1,2$.
From the definition of $\Phi$ \eqref{eq:A-Fi}, we have
\begin{equation*}
	\|\Phi(y_1)-\Phi(y_2)\|_X  \leq \|y_1-y_2\|_X +\e^{-1}\tau^{-1/2}\left(\|f(u_1)-f(u_2)\|_{L^2}+\|v_1-v_2\|_{L^2}\right).
\end{equation*}
From \eqref{eq:hyp-f} it follows that 
\begin{align*}
	\|f(u_1)-f(u_2)\|_{L^2} & \leq C\|u_1-u_2\|_{L^p}\|1+|u_1|^\alpha+|u_2|^\alpha\|_{L^q} \\
	& \leq C\|u_1-u_2\|_{L^p}(1+\|u_1\|_{L^{\alpha q}}^\alpha+\|u_2\|_{L^{\alpha q}}^\alpha), 
\end{align*}
where $\frac{1}{p}+\frac1q=\frac12$.
Now, we want to use the fact that $H^1(\Omega)$ is continuously embedded in $L^p(\Omega)$ for any $p\in[1,\infty)$ if $n=2$ and $p\in[1,6]$ if $n=3$.
Consider the case $n=3$ (the case $n=2$ is simpler); by choosing $p=6$ ($q=3$), we obtain
\begin{equation*}
	\|f(u_1)-f(u_2)\|_{L^2}\leq C\|u_1-u_2\|_{H^1}(1+\|u_1\|_{H^1}^\alpha+\|u_2\|_{H^1}^\alpha)\leq C(M)\|u_1-u_2\|_{H^1},
\end{equation*}
if $\alpha\in[0,2]$, where the positive constant $C$ depends on $M:=\max\{\|y_1\|_X, \|y_2\|_X\}$,
and we can conclude that \eqref{eq:Phi-Lip} holds.
Thus, we can apply a classical theory (cfr. \cite[Chapter 4]{Cazenave} or \cite[Chapter 6]{Pazy}) to state that for all $x\in X$ 
there exists a unique mild solution $y\in C([0,T(x)),X)$, and that if $x\in D(A)$, then $y$ is a classical solution. 
Finally, the solution depends continuously on the initial data $x\in X$, uniformly for all $t\in[0,T]$.

The global existence of the solution $y=(u,u_t)$ is guaranteed if we have an {\it a priori} estimate of $\|(u,u_t)\|_X$ on $[0,T(x))$. 
To obtain such estimate, we can use the energy functional defined by \eqref{eq:energy},
that is a nonincreasing function of $t$ along the solutions of \eqref{eq:hypalca-multiD} with boundary conditions \eqref{eq:neumann}. 
This allows us to obtain an estimate (depending only on the initial data $u_0,u_1$) on the $X$--norm of the solutions 
and thus to prove the global existence of the solutions, provided an extra assumptions on the nonlinearity $f$ (see, among others, \cite[Theorem A.7]{FolinoJHDE}).
\subsection{The compactness theorem}
Now, consider the equation \eqref{eq:hypalca-multiD}, and assume that $f=-F'$, where $F\in C^3(\R)$ satisfies
\begin{equation}\label{eq:ass-F}
	F(\pm1)=F'(\pm1)=0, \quad F(s)>0 \; \mbox{ for } s\neq\pm1,
\end{equation}
and there exist positive constants $c_1,C_1$, $K\geq1$ and $\gamma\geq2$ such that
\begin{equation}\label{eq:ass-F-inf}
	c_1|s|^{\gamma/2+1}\leq F(s)\leq C_1|s|^\gamma, \qquad \quad \mbox{ for } |s|\geq K.
\end{equation}
Moreover, $g\in C^1(\R)$ is required to be strictly positive, namely
\begin{equation}\label{eq:ass-g}
	g(s)\geq\kappa>0, \qquad \quad \textrm{ for any }\, s\in\R.
\end{equation}
\begin{rem}
Observe that if $f=-F'$ and the condition \eqref{eq:hyp-f} holds, then there exists $C>0$ such that
\begin{equation*}
	|F(s)|	\leq C |s|^{\alpha+2}, \qquad \quad \mbox{ for } |s|\geq 1,
\end{equation*}
for some $\alpha>0$ in the case $n=2$ and for some $\alpha\in[0,2]$ in the case $n=3$.
Hence, if we want both \eqref{eq:hyp-f} and \eqref{eq:ass-F-inf} to be satisfied, we have to choose $\gamma\leq6$ in the case $n=3$.
As we previously mentioned in Section \ref{sec:intro}, the main example we have in mind is $F(u)=\frac14(u^2-1)^2$, 
and this potential satisfies all the assumptions discussed above.
\end{rem}
The aim of this subsection is to prove a compactness theorem for the solutions to \eqref{eq:hypalca-multiD} as $\e\to0$, 
when the potential $F$ satisfies the assumptions discussed above and $g$ is strictly positive.
To do this, we use the approach introduced by Bronsard and Kohn \cite{Bron-Kohn2} in the case of the classic Allen--Cahn equation \eqref{eq:allencahn-rescaled}.
Regarding the initial data, let us assume that $u_0$, $u_1$ depend on $\e$ and 
\begin{equation}\label{eq:u0-v0}
	\lim_{\e\to0}\|u_0^\e-v_0\|_{L^1(\Omega)}=0,
\end{equation} 
where $v_0$ is a fixed function taking only the values $\pm1$, and that there exists a positive constant $M$ such that 
\begin{equation}\label{eq:energialimitata}
	E_\e[u^\e_0,u^\e_1]\leq M,
\end{equation}
where the energy $E_\e$ is defined in \eqref{eq:energy}.
Since $g$ is strictly positive, from \eqref{variazione-energia} and \eqref{eq:energialimitata} it follows that $(u^\e,u^\e_t)$ satisfies
\begin{align}
	\sup_{t\geq0} E_\e[u^\e,u^\e_t](t)&\leq M,  \label{eq:E<M}\\
	\sup_{t\geq0} \int_\Omega F(u^\e(x,t))\,dx&\leq \e M. \label{eq:int-F}
\end{align}
Moreover, for \eqref{eq:ass-g} we deduce
\begin{equation}\label{eq:ut-stima}
	\e\kappa\int_{t_1}^{t_2}\int_\Omega u^\e_t(x,t)^2\,dxdt\leq E_\e[u,u_t](t_1)-E_\e[u,u_t](t_2)\leq M,
\end{equation}
for any $0\leq t_1<t_2$.
Introducing the function
\begin{equation}\label{eq:Psi}
	\Psi(x):=\int_{-1}^x\sqrt{2F(s)}\,ds,
\end{equation}
we can also prove the following result.
\begin{prop}
Let $(u^\e,u^\e_t)\in C\left([0,\infty),H^2(\Omega)\times H^1(\Omega)\right)$ be the solution to \eqref{eq:hypalca-multiD}, 
where $f=-F'$ with $F$ satisfying \eqref{eq:ass-F} and $g$ satisfying \eqref{eq:ass-g}, 
with either Neumann \eqref{eq:neumann} or Dirichlet \eqref{eq:dirichlet} boundary conditions.
In addition, assume that the initial data \eqref{eq:initialdata} satisfy \eqref{eq:energialimitata}.
Then, 
\begin{equation}\label{eq:grad-Psi}
	\sup_{t\geq0}\int_\Omega |\nabla \Psi(u^\e(x,t))|\,dx\leq M,
\end{equation}
and, for $0\leq t_1<t_2$, 
\begin{equation}\label{eq:dt-Psi}	
	\int_{t_1}^{t_2}\int_\Omega |\partial_t\Psi(u^\e(x,t))|\,dxdt\leq\sqrt\frac{2}\kappa M(t_2-t_1)^{1/2}.
\end{equation}
\end{prop}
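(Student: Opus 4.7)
The plan rests on a direct chain rule computation together with two classical inequalities: an arithmetic-geometric mean split for the spatial estimate, and Cauchy--Schwarz for the temporal one. Since $\Psi'(x)=\sqrt{2F(x)}$, the chain rule gives
\begin{equation*}
\nabla\Psi(u^\e)=\sqrt{2F(u^\e)}\,\nabla u^\e,\qquad \partial_t\Psi(u^\e)=\sqrt{2F(u^\e)}\,u^\e_t,
\end{equation*}
so both estimates reduce to bounding products of $\sqrt{2F(u^\e)}$ with either $|\nabla u^\e|$ or $|u^\e_t|$. The key observation is that $\sqrt{2F(u^\e)}\,|\nabla u^\e|$ is exactly the geometric mean of the two density terms appearing in the energy functional $E_\e$ (up to the factors of $\e$), which is the classical Modica--Mortola trick underlying the link between the Allen--Cahn functional and perimeter.

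For \eqref{eq:grad-Psi}, the plan is to apply $ab\le\tfrac12(a^2+b^2)$ with $a=\sqrt{2\e^{-1}F(u^\e)}$ and $b=\sqrt{\e}\,|\nabla u^\e|$ to obtain pointwise
\begin{equation*}
|\nabla\Psi(u^\e)|\le \e^{-1}F(u^\e)+\frac{\e}{2}|\nabla u^\e|^2,
\end{equation*}
and then integrate over $\Omega$; the right-hand side is dominated by the full energy density, so the bound \eqref{eq:E<M} yields the claim uniformly in $t\ge0$.

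For \eqref{eq:dt-Psi}, I would apply the Cauchy--Schwarz inequality on $\Omega\times[t_1,t_2]$ to split
\begin{equation*}
\int_{t_1}^{t_2}\!\!\int_\Omega \sqrt{2F(u^\e)}\,|u^\e_t|\,dxdt \le \left(\int_{t_1}^{t_2}\!\!\int_\Omega 2F(u^\e)\,dxdt\right)^{\!1/2}\!\left(\int_{t_1}^{t_2}\!\!\int_\Omega (u^\e_t)^2\,dxdt\right)^{\!1/2}.
\end{equation*}
Then I would control the first factor by integrating the pointwise-in-time bound \eqref{eq:int-F}, giving $2\e M(t_2-t_1)$, and the second factor by the dissipation estimate \eqref{eq:ut-stima}, giving $M/(\e\kappa)$. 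The powers of $\e$ cancel exactly, leaving $\sqrt{2/\kappa}\,M\,(t_2-t_1)^{1/2}$.

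There is no real obstacle here: the statement is tailored so that the Modica--Mortola splitting matches the energy and the Cauchy--Schwarz balance between $F(u^\e)$ and $(u^\e_t)^2$ precisely reproduces the $L^\infty_tL^1_x$ and $L^2_{t,x}$ bounds that the energy identity provides. The only point requiring minor care is justifying the chain rule for $\Psi(u^\e)$, which is immediate in the regularity class $C([0,\infty),H^2(\Omega)\times H^1(\Omega))$ since $\Psi\in C^1$ with polynomially bounded derivative under \eqref{eq:ass-F}--\eqref{eq:ass-F-inf}.
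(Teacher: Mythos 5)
Your proposal is correct and follows essentially the same route as the paper: the Young (Modica--Mortola) splitting of $\sqrt{2F(u^\e)}\,|\nabla u^\e|$ against the energy bound \eqref{eq:E<M} for \eqref{eq:grad-Psi}, and Cauchy--Schwarz combined with \eqref{eq:int-F} and \eqref{eq:ut-stima} for \eqref{eq:dt-Psi}, with the powers of $\e$ cancelling exactly as you indicate.
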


\begin{proof}
Let us start with \eqref{eq:grad-Psi}.
Since $\nabla\Psi(u^\e)=\sqrt{2F(u^\e)}\nabla u^\e$ a.e. in $\Omega$, using Young inequality we get
\begin{align*}
	\int_\Omega|\nabla\Psi(u^\e(x,t))|\,dx&=\int_\Omega\sqrt{2F(u^\e(x,t))}|\nabla u^\e(x,t)|\,dx\\
	&\leq\int_\Omega\left[\frac\e2|\nabla u^\e(x,t)|^2+\e^{-1}F(u^\e(x,t))\right]\,dx\leq E_\e[u^\e,u^\e_t](t),
\end{align*}
for any $t\geq0$.
Hence, using \eqref{eq:E<M} we obtain \eqref{eq:grad-Psi}.
The proof of \eqref{eq:dt-Psi} is very similar.
From the Cauchy--Schwarz inequality, it follows that 
\begin{align*}
	\int_{t_1}^{t_2}\int_\Omega |\partial_t&\Psi(u^\e(x,t))|\,dxdt= \int_{t_1}^{t_2}\int_\Omega\sqrt{2F(u^\e(x,t))}|u_t^\e(x,t)|\,dxdt\\
	&\leq \left(\int_{t_1}^{t_2}\int_\Omega2F(u^\e(x,t))dxdt\right)^{1/2}\left(\int_{t_1}^{t_2}\int_\Omega u_t^\e(x,t)^2dxdt\right)^{1/2},	
\end{align*}
for any $0\leq t_1<t_2$.
Using \eqref{eq:int-F} and \eqref{eq:ut-stima}, we obtain
\begin{equation*}
	\int_{t_1}^{t_2}\int_\Omega |\partial_t\Psi(u^\e(x,t))|\,dxdt\leq\sqrt\frac{2}\kappa M(t_2-t_1)^{1/2},
\end{equation*}
and the proof is complete.
\end{proof}
The previous properties of the solution $(u^\e,u^\e_t)$ allow us to prove the following compactness theorem,
that is the main result of this section.
\begin{thm}\label{thm:compactness}
Let $(u^\e,u^\e_t)\in C\left([0,\infty),H^2(\Omega)\times H^1(\Omega)\right)$ be the solution to \eqref{eq:hypalca-multiD}-\eqref{eq:initialdata} 
with either Neumann \eqref{eq:neumann} or Dirichlet \eqref{eq:dirichlet} boundary conditions 
and $f=-F'$, with $F,g$ satisfying \eqref{eq:ass-F}, \eqref{eq:ass-F-inf}, \eqref{eq:ass-g}.
Assume that the initial data $u_0^\e$, $u_1^\e$ satisfy \eqref{eq:u0-v0} and \eqref{eq:energialimitata}.
Then, for any sequence of $\e$'s approaching to zero, there exists a subsequence $\e_j$ such that 
\begin{equation}\label{eq:lim-main}
	\lim_{\e_j\to0} u^{\e_j}(x,t)=v(x,t) \qquad \quad \mbox{ for a.e. }\, (x,t)\in\Omega\times(0,\infty),
\end{equation}
where the function $v$ takes only the values $\pm1$ and satisfies
\begin{align}
	\int_\Omega|v(x,t_2)-v(x,t_1)|\,dx&\leq C|t_2-t_1|^{1/2}, \label{eq:v-t2t1}\\
	\sup_{t\geq0}\|v(\cdot,t)\|_{BV(\Omega)}&\leq C, \label{eq:v-BV}
\end{align}
for some $C>0$, and
\begin{equation}\label{eq:v-v0}
	\lim_{t\to0}\|v(\cdot,t)-v_0\|_{L^1(\Omega)}=0.
\end{equation}
\end{thm}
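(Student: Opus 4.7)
The strategy, following Bronsard--Kohn \cite{Bron-Kohn2}, is to prove compactness for the transformed sequence $w^\e(x,t) := \Psi(u^\e(x,t))$, and then transfer the result back to $u^\e$ by inverting the continuous strictly increasing bijection $\Psi\colon\R\to\R$. The two key a priori bounds are already in hand: \eqref{eq:grad-Psi} provides uniform $BV$ control in $x$ at every time, while \eqref{eq:dt-Psi} says exactly that $t\mapsto w^\e(\cdot,t)$ is uniformly $\tfrac12$-H\"older with values in $L^1(\Omega)$, uniformly in $\e$.

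\textbf{Spatial compactness and diagonalization.} For each fixed $t\geq 0$, the growth condition \eqref{eq:ass-F-inf} combined with \eqref{eq:int-F} bounds $\|u^\e(\cdot,t)\|_{L^{\gamma/2+1}(\Omega)}$, and hence $\|w^\e(\cdot,t)\|_{L^1(\Omega)}$, uniformly in $\e$; together with \eqref{eq:grad-Psi}, this makes $\{w^\e(\cdot,t)\}_\e$ uniformly bounded in $BV(\Omega)$. The compact embedding $BV(\Omega)\hookrightarrow L^1(\Omega)$ then yields $L^1$-convergence along a subsequence. Fixing a countable dense set $\mathcal{D}\subset[0,\infty)$ with $0\in\mathcal{D}$ and applying a standard diagonal extraction, we obtain a single subsequence $\e_j\to 0$ and a family $\{w(\cdot,t_k)\}_{t_k\in\mathcal{D}}\subset BV(\Omega)$ such that $w^{\e_j}(\cdot,t_k)\to w(\cdot,t_k)$ in $L^1(\Omega)$ for every $t_k\in\mathcal{D}$.

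\textbf{Extension to all $t$ and identification of the limit.} The H\"older estimate \eqref{eq:dt-Psi} passes to the limit along $\mathcal{D}$, so $w$ extends uniquely to a continuous map $[0,\infty)\to L^1(\Omega)$. A three-$\e$ argument combining the uniform modulus with the convergence on $\mathcal{D}$ upgrades the $L^1$ convergence to every $t\geq 0$; extracting once more yields a.e.\ convergence on $\Omega\times(0,\infty)$, and the assignment $v:=\Psi^{-1}(w)$ produces \eqref{eq:lim-main}. Integrating \eqref{eq:int-F} in time shows $F(u^{\e_j})\to 0$ in $L^1_{\mathrm{loc}}(\Omega\times[0,\infty))$, so $F(v)=0$ a.e., and assumption \eqref{eq:ass-F} forces $v\in\{-1,+1\}$ a.e. Setting $c_\Psi:=\Psi(1)-\Psi(-1)>0$, the elementary identity $|v_2-v_1|=\tfrac{2}{c_\Psi}|\Psi(v_2)-\Psi(v_1)|$ valid whenever $v_i\in\{\pm 1\}$ converts the H\"older bound for $w$ into \eqref{eq:v-t2t1}, and lower semicontinuity of total variation converts \eqref{eq:grad-Psi} into \eqref{eq:v-BV}. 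For the initial trace, $0\in\mathcal{D}$ gives, along a further subsequence, $u^{\e_{j_k}}_0\to v(\cdot,0)$ a.e.; comparing with the $L^1$ convergence $u^{\e_{j_k}}_0\to v_0$ from \eqref{eq:u0-v0} (and a further a.e.\ extraction) forces $v(\cdot,0)=v_0$, and then the time continuity of $v$ at $0$ delivers \eqref{eq:v-v0}.

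\textbf{Main obstacle.} The delicate step is the second one: reconciling the purely spatial $BV$ compactness with the scalar $L^1$-in-time equicontinuity. The diagonal-plus-three-$\e$ scheme must be arranged so that a \emph{single} subsequence $\e_j$ serves simultaneously for every time $t\geq 0$, and one more extraction is required to pass from pointwise-in-$t$ $L^1$ convergence to genuine a.e.\ convergence on $\Omega\times(0,\infty)$. The remaining steps---inversion of $\Psi$, the identification $v\in\{\pm 1\}$, the transfer of the $BV$ and H\"older bounds via the constant $c_\Psi$, and the initial-trace matching---are then short and essentially routine.
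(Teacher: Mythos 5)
Your proposal is correct, but the compactness mechanism differs from the paper's. The paper applies the compact embedding $BV(\Omega_T)\hookrightarrow L^1(\Omega_T)$ directly to $\Psi(u^\e)$ on the space--time cylinder: the bounds \eqref{eq:grad-Psi} and \eqref{eq:dt-Psi} control $\int_{\Omega_T}|D\Psi(u^\e)|$, while the growth assumption \eqref{eq:ass-F-inf} together with \eqref{eq:int-F} gives the $L^1(\Omega_T)$ bound \eqref{eq:BV-2}; one then extracts an $L^1(\Omega_T)$-convergent (hence a.e.\ convergent) subsequence, identifies $v$ through the monotonicity of $\Psi$, and only afterwards recovers time regularity by passing to the limit in \eqref{eq:Psi-t1t2} for a.e.\ $t_1,t_2$ and redefining $v$ at the exceptional times to make it $L^1$-continuous. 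You instead use slice-wise compactness ($BV(\Omega)\hookrightarrow L^1(\Omega)$ at each fixed $t$, with the same use of \eqref{eq:ass-F-inf}+\eqref{eq:int-F} to bound $\|\Psi(u^\e(\cdot,t))\|_{L^1}$), a diagonal extraction over a countable dense set of times, and the uniform $1/2$-H\"older modulus from \eqref{eq:dt-Psi} in an Arzel\`a--Ascoli/three-$\e$ argument; the subsequent steps (inversion of $\Psi$, Fatou to get $v\in\{\pm1\}$, the identity $|v_2-v_1|=\tfrac{2}{c_\Psi}|\Psi(v_2)-\Psi(v_1)|$, matching with \eqref{eq:u0-v0}) coincide with the paper's. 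Your route is slightly longer in the extraction stage (and the final passage from convergence at every $t$ to a.e.\ convergence on $\Omega\times(0,\infty)$, which you flag, still needs the routine dominated-convergence step to $L^1(\Omega_T)$ plus one more subsequence), but it buys convergence in $L^1(\Omega)$ at \emph{every} time, so no redefinition at exceptional times is needed, the initial trace $v(\cdot,0)=v_0$ is identified cleanly without needing $\Psi(u_0^\e)\to\Psi(v_0)$ in $L^1$ (the paper's \eqref{eq:Psi-u0v0} is stated rather tersely), and \eqref{eq:v-BV} comes out explicitly from lower semicontinuity of the total variation at each $t$, a point the paper leaves implicit.
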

\begin{proof}
Firstly, let us fix $T>0$ and prove the existence of a subsequence which converges a.e. on $\Omega_T:=\Omega\times(0,T)$.
To this aim, we use that the Banach space $BV(\Omega_T)$ is compactly embedded in $L^1(\Omega_T)$ (among others, see \cite[Theorem 1.19]{Giusti}).
We recall that, given an open set $A\subset\R^n$ and a function $f\in L^1(A)$, 
\begin{align*}
	\int_A|Df|:=\sup\biggl\{\int_A f\mathrm{div}\phi\,dx\, : \, \phi=&(\phi_1,\dots,\phi_n)\in C^1_0(A;\R^n) \\
	&  \qquad\qquad\mbox{ and } \, |\phi(x)|\leq1, \; x\in A\biggr\}.
\end{align*}
The space $BV(A)$ of all the functions $f\in L^1(A)$ such that $\displaystyle\int_A|Df|<\infty$ is a Banach space with the norm
\begin{equation*}
	\|f\|_{BV(A)}:=\|f\|_{L^1(A)}+\int_A|Df|.
\end{equation*}
Now, we have that the functions $\Psi(u^\e)$ are uniformly bounded in $BV(\Omega_T)$.
Indeed, from \eqref{eq:grad-Psi} and \eqref{eq:dt-Psi}, it follows that
\begin{equation}\label{eq:BV-1}
	\int_{\Omega_T}|D\Psi(u^\e)|=\int_0^T\int_\Omega|\nabla \Psi(u^\e(x,t))|\,dxdt+\int_0^T\int_\Omega|\partial_t\Psi(u^\e(x,t))|\,dxdt\leq C, 
\end{equation}
for some constant $C>0$.
Moreover, we claim that
\begin{equation}\label{eq:BV-2}
	\int_0^T\int_\Omega|\Psi(u^\e(x,t))|\,dxdt\leq C, 
\end{equation}
for some constant $C>0$ (independent on $\e$).
In order to prove \eqref{eq:BV-2} let us use the assumption on $F$ \eqref{eq:ass-F-inf}.
If $|u^\e|\leq K$ a.e on $\Omega_T$, then \eqref{eq:BV-2} trivially holds.
Otherwise, we split the integral 
\begin{equation*}
	\int_0^T\int_\Omega|\Psi(u^\e)|\,dxdt=\int_{\{|u^\e|\leq K\}}|\Psi(u^\e)|\,dxdt+\int_{\{|u^\e|\geq K\}}|\Psi(u^\e)|\,dxdt.
\end{equation*}
The first integral is uniformly bounded, whereas for the second one we use \eqref{eq:ass-F-inf} and
\begin{align*}
	|\Psi(u^\e)|&\leq \int_{-K}^K\sqrt{2F(s)}\,ds+\int_{-|u^\e|}^{-K}\sqrt{2F(s)}\,ds+\int_{K}^{|u^\e|}\sqrt{2F(s)}\,ds\\
	&\leq C+2\sqrt{2C_1}\int_K^{|u^\e|}|s|^{\gamma/2}\,ds
	\leq C\left(1+|u^\e|^{\gamma/2+1}\right)\leq C\left(1+F(u^\e)\right).
\end{align*}
Therefore, 
\begin{equation*}
	\int_{\{|u^\e|\geq K\}}|\Psi(u^\e(x,t))|\,dxdt\leq C+C\int_0^T\int_\Omega F(u^\e(x,t))\,dxdt,
\end{equation*}
and using \eqref{eq:int-F} we obtain the claim \eqref{eq:BV-2}.

Thanks to \eqref{eq:BV-1}-\eqref{eq:BV-2} and a standard compactness result (among others, see \cite[Theorem 1.19]{Giusti}), we can state that 
there exists a subsequence $\Psi(u^{\e_j})$ which converges in $L^1(\Omega_T)$ to a function $\Psi^*$,
namely
\begin{equation}\label{eq:Psi*}
	\lim_{\e_j\to0}\|\Psi(u^{\e_j})-\Psi^*\|_{L^1(\Omega_T)}=0.
\end{equation}
Passing to a further subsequence if necessary, we obtain
\begin{equation*}
	\lim_{\e_j\to0}\Psi(u^{\e_j}(x,t))=\Psi^*(x,t), \qquad \qquad  \mbox{ a.e. on }\, \Omega\times(0,T).
\end{equation*}
Since $\Psi'=\sqrt{2F}$ is strictly positive except at $\pm1$, the function $\Psi$ is monotone and there is
a unique function $v$ such that $\Psi(v(x,t))=\Psi^*(x,t)$, and so 
\begin{equation*}
	\lim_{\e_j\to0} u^{\e_j}(x,t)=v(x,t) \qquad \quad \mbox{ a.e. on }\, \Omega\times(0,T).
\end{equation*}
Using the Fatou's Lemma and \eqref{eq:int-F}, we get
\begin{equation*}
	\int_0^T\int_\Omega F(v(x,t))\,dxdt\leq\liminf_{\e_j\to0}\int_0^T\int_\Omega F(u^{\e_j}(x,t))\,dxdt=0,
\end{equation*}
and so, $v$ takes only the values $\pm1$.
Now, let us prove \eqref{eq:v-t2t1}.
For any fixed $x\in\Omega$ one has
\begin{equation*}
	\left|\Psi(u^\e(x,t_2))-\Psi(u^\e(x,t_1))\right|\leq\int_{t_1}^{t_2}\left|\partial_t\Psi(u^\e(x,t))\right|\,dt,
\end{equation*}
for any $0\leq t_1<t_2$.
Integrating and using \eqref{eq:dt-Psi} we end up with
\begin{equation}\label{eq:Psi-t1t2}
	\int_\Omega\left|\Psi(u^\e(x,t_2))-\Psi(u^\e(x,t_1))\right|\,dx\leq\sqrt\frac{2}\kappa M(t_2-t_1)^{1/2}.
\end{equation}
Since $\Psi(u^{\e_j}(\cdot,t))\to\Psi^*(\cdot,t)$ in $L^1(\Omega)$ for almost every $t\in(0,T)$ by \eqref{eq:Psi*} and
\begin{equation}\label{eq:Psi-u0v0}
	\int_\Omega\left|\Psi(u^\e_0(x))-\Psi(v_0(x))\right|\,dx=0,
\end{equation}
because of \eqref{eq:u0-v0} and \eqref{eq:grad-Psi}, passing to the limit as $\e_j\to0$ in \eqref{eq:Psi-t1t2} we conclude that
\begin{equation}\label{eq:Psi*-t1t2}
	\int_\Omega\left|\Psi^*(x,t_2)-\Psi^*(x,t_1)\right|\,dx\leq\sqrt\frac{2}\kappa M(t_2-t_1)^{1/2},
\end{equation}
for almost every $0\leq t_1<t_2<T$.
However, $\Psi^*(x,t)=\Psi(v(x,t))$ with $v$ taking only the values $\pm1$ and as a consequence
\begin{align}
	\left|\Psi^*(x,t_2)-\Psi^*(x,t_1)\right|&=\left|\Psi(v(x,t_2))-\Psi(v(x,t_1))\right|\notag\\
	&=\frac{\Psi(1)}2|v(x,t_2)-v(x,t_1)|, \label{eq:Psi*-c0}
\end{align}
where we used that $\Psi(-1)=0$. 
Therefore, substituting \eqref{eq:Psi*-c0} in \eqref{eq:Psi*-t1t2}, we obtain \eqref{eq:v-t2t1} for almost every $t_1, t_2\in(0,T)$.
It is possible to redefine $v$ at the exceptional times to make it continuous as a map from $[0,T]$ to $L^1(\Omega)$,
and then \eqref{eq:v-t2t1} holds for every $t_1, t_2\in(0,T)$.

By reasoning in the same way, we obtain \eqref{eq:v-v0}.
Taking $t_1=0$ in \eqref{eq:Psi-t1t2} and passing to the limit as $\e_j\to0$ making use of \eqref{eq:Psi-u0v0}, we deduce
\begin{equation*}
	\int_\Omega\left|\Psi(v(x,t_2))-\Psi(v_0(x))\right|\,dx\leq\sqrt\frac{2}\kappa M(t_2)^{1/2},
\end{equation*}
and using \eqref{eq:Psi*-c0}, we get \eqref{eq:v-v0}.
In conclusion, we proved the properties \eqref{eq:lim-main}-\eqref{eq:v-BV} on arbitrary finite time intervals $(0,T)$.
It is possible to extend the results on the infinite interval $(0,\infty)$ by taking a sequence of times $T_j\to\infty$ and 
a diagonal subsequence of $\{u^\e\}$ in the usual manner.
\end{proof}
\begin{rem}
Consider the slower time scale of order $\e^{-2}$, i.e. the new variable $s=\e^{-2}t$.
Then the function $\tilde{u}^\e(x,s)=u^\e(x,\e^{2}s)$ satisfies the equation
\begin{equation}\label{eq:rescale}
	\tau \tilde u^\e_{ss}+g(\tilde u^\e)\tilde u^\e_s=\e^2\Delta\tilde u^\e+f(\tilde u^\e), \qquad \quad x \in\Omega, t>0.
\end{equation}
Using \eqref{eq:Psi-t1t2} with $t_2=\e^{2}s$ and $t_1=0$, we obtain
\begin{equation*}
	\int_\Omega\left|\Psi(\tilde u^\e(x,s))-\Psi(\tilde u^\e(x,0))\right|\,dx\leq\sqrt\frac{2}\kappa M\e s^{1/2},
\end{equation*}
for any $s>0$.
This shows that the evolution of the solutions to \eqref{eq:rescale} is very slow (for $\e$ small) until $s\sim\e^{-2}$.
\end{rem}
\begin{rem}\label{rem:ass-F-inf}
In all this section, we used the assumption \eqref{eq:ass-F-inf} only to prove \eqref{eq:BV-2}.
Indeed, assumption \eqref{eq:ass-F-inf} implies the uniformly boundedness of the term if the initial data satisfy \eqref{eq:energialimitata}.
Observe that, if we assume that the solution $u^\e$ is uniformly (with respect to $\e$) bounded for any $t$,
then \eqref{eq:BV-2} trivially holds and  we can remove the assumption \eqref{eq:ass-F-inf} from Theorem \ref{thm:compactness}.
\end{rem}

\subsection{Higher order estimates}\label{sec:higher}
By using the energy functional \eqref{eq:energy}, it is possible to obtain a control for the $H^1\times L^2$--norm of the solutions $(u^\e,u^\e_t)$.
The goal of this subsection is to obtain higher order estimates, in particular to control the behavior of the $H^1$--norm of $u^\e_t$ as $\e\to 0$.
\begin{rem}\label{rem:regularity}
Since $\Omega$ is a bounded domain of $\R^n$, $n=2$ or $3$, with a $C^1$ boundary, thanks to the general Sobolev inequalities, 
we can say that $H^2(\Omega)$ is continuously embedded in $C^{0,\gamma}(\Omega)$, 
with $\gamma$ any positive number strictly less than $1$ if $n=2$ and $\gamma=1/2$ if $n=3$.
Furthermore, $H^1(\Omega)$ is continuously embedded in $L^p(\Omega)$ for any $p\in[1,\infty)$ if $n=2$ and $p\in[1,6]$ if $n=3$.
Therefore, we can say that $(u^\e,u^\e_t)\in C\left([0,\infty),C^{0,\gamma}(\Omega)\times L^p(\Omega)\right)$ and the functions
\begin{equation*}
	s_1(t):=\sup_{x\in\Omega}|u^\e(x,t)|, \quad \quad s_2(t):= \|u^\e_t(\cdot,t)\|_{L^p(\Omega)}
\end{equation*}
are continuous function on $[0,\infty)$. 
In the following we assume that the function $s_1$ defined above is uniformly bounded in $\e$. 
\end{rem}
Consider the case $g=1$, that is the case we will study in the next section, where we will use the following result.
\begin{prop}\label{prop:higher}
Let 
$$(u^\e,u^\e_t)\in C\left([0,T],H^2(\Omega)\times H^1(\Omega)\right)\cap C^1\left([0,T],H^1(\Omega)\times L^2(\Omega)\right)$$
be the solution to \eqref{eq:hypalca-multiD}, where $f=-F'$ with $F$ satisfying \eqref{eq:ass-F} and $g\equiv1$, 
with either Neumann \eqref{eq:neumann} or Dirichlet \eqref{eq:dirichlet} boundary conditions.
Regarding the initial data \eqref{eq:initialdata}, we assume that they satisfy \eqref{eq:energialimitata}, that $u^\e$ is uniformly bounded, namely
\begin{equation*}
	\sup_{x\in \Omega}|u^\e(x,t)|\leq C, \qquad \quad \forall\, t\in[0,T],
\end{equation*}
and that there exists a positive constant $C$ (independent on $\e$ and $\tau$) such that 
\begin{equation}\label{eq:R[u0,u1]}
	R[u^\e_0,u^\e_1]:=\e^{-2}\tau^{-1}\int_\Omega \left(\Delta u^\e_0-\e^{-2}F'(u^\e_0)-u^\e_1\right)^2\,dx+\int_\Omega|\nabla u^\e_1|^2\,dx\leq C\e^{-5}\tau^{-1}.
\end{equation}
Then, there exists $C>0$ (independent on $\e$ and $\tau$) such that
\begin{equation}\label{eq:grad-ut}
	\int_0^T\|  u^\e_t(\cdot,t)\|_{H^1(\Omega)}^2\leq C\e^{-5}\left(1+\tau^{-1}\right).
\end{equation}
\end{prop}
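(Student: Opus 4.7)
The plan is to obtain a pointwise-in-time $H^1$ bound on $u^\e_t$ by a standard energy estimate performed on the time-differentiated equation, and then integrate in $t$. Setting $g\equiv 1$, equation \eqref{eq:hypalca-multiD} reads
\begin{equation*}
	\e^2\tau u^\e_{tt}+u^\e_t=\Delta u^\e-\e^{-2}F'(u^\e).
\end{equation*}
Differentiate in $t$ to get
\begin{equation*}
	\e^2\tau u^\e_{ttt}+u^\e_{tt}=\Delta u^\e_t-\e^{-2}F''(u^\e)u^\e_t,
\end{equation*}
multiply by $u^\e_{tt}$ and integrate over $\Omega$. The boundary terms arising from the integration by parts of $\int_\Omega \Delta u^\e_t\,u^\e_{tt}\,dx=-\int_\Omega\nabla u^\e_t\cdot\nabla u^\e_{tt}\,dx$ vanish in both cases: under \eqref{eq:neumann} one has $\partial_n u^\e_t=0$, while under \eqref{eq:dirichlet} one has $u^\e_{tt}=0$ on $\partial\Omega$, since $u^\e$ is time independent on $\partial\Omega$.

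Introducing the auxiliary energy
\begin{equation*}
	\mathcal{E}^\e(t):=\frac{\e^2\tau}{2}\int_\Omega u^\e_{tt}(x,t)^2\,dx+\frac{1}{2}\int_\Omega|\nabla u^\e_t(x,t)|^2\,dx,
\end{equation*}
the identity obtained above can be written as
\begin{equation*}
	\frac{d}{dt}\mathcal{E}^\e(t)+\int_\Omega u^\e_{tt}(x,t)^2\,dx=-\e^{-2}\int_\Omega F''(u^\e)\,u^\e_t\,u^\e_{tt}\,dx.
\end{equation*}
By the uniform bound on $\|u^\e\|_\infty$ (Remark \ref{rem:regularity} and the stated hypothesis), we have $|F''(u^\e)|\le C_F$ uniformly in $\e$. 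Applying Young's inequality $|ab|\le a^2/2+b^2/2$ to the right-hand side absorbs the $u^\e_{tt}$ term on the left and yields
\begin{equation*}
	\frac{d}{dt}\mathcal{E}^\e(t)+\frac{1}{2}\int_\Omega u^\e_{tt}(x,t)^2\,dx\le C\e^{-4}\int_\Omega u^\e_t(x,t)^2\,dx.
\end{equation*}

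At $t=0$, solving the equation for $u^\e_{tt}(\cdot,0)$ gives
\begin{equation*}
	\mathcal{E}^\e(0)=\frac{1}{2\e^{2}\tau}\int_\Omega\bigl(\Delta u^\e_0-\e^{-2}F'(u^\e_0)-u^\e_1\bigr)^2\,dx+\frac{1}{2}\int_\Omega|\nabla u^\e_1|^2\,dx\le \tfrac12 R[u^\e_0,u^\e_1]\le C\e^{-5}\tau^{-1},
\end{equation*}
by the assumption \eqref{eq:R[u0,u1]}. Using estimate \eqref{eq:ut-stima} with $\kappa=1$, we have $\int_0^T\!\!\int_\Omega(u^\e_t)^2\,dxdt\le M\e^{-1}$, so integrating the differential inequality from $0$ to $t\in[0,T]$ produces
\begin{equation*}
	\mathcal{E}^\e(t)\le \mathcal{E}^\e(0)+C\e^{-4}\cdot M\e^{-1}\le C\e^{-5}(1+\tau^{-1}).
\end{equation*}
In particular, $\|\nabla u^\e_t(\cdot,t)\|_{L^2}^2\le 2\mathcal{E}^\e(t)\le C\e^{-5}(1+\tau^{-1})$ uniformly in $t\in[0,T]$, which after integration in $t$ and combination with the $L^2$ bound on $u^\e_t$ gives the desired \eqref{eq:grad-ut}.

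The main technical point that needs care is the justification of the calculation at the level of the regularity stated, since $u^\e_{ttt}$ does not obviously exist as an $L^2$ function. I would address this in the usual way, by performing the estimate on a smoother approximation (e.g.\ initial data mollified along the semigroup, or a Galerkin approximation), noting that the constants in the derivation only use $\|F''(u^\e)\|_\infty$, $\mathcal{E}^\e(0)$, and $\int_0^T\|u^\e_t\|_{L^2}^2$, and then passing to the limit by lower semicontinuity of norms; the bound \eqref{eq:grad-ut} persists. The sharp tracking of the $\e^{-5}(1+\tau^{-1})$ factor is what makes the assumption \eqref{eq:R[u0,u1]} natural: the $\tau^{-1}$ in \eqref{eq:grad-ut} arises exclusively from $\mathcal{E}^\e(0)$, while the $\e^{-5}$ from the forcing term already enjoys the same power.
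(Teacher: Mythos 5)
Your proposal is correct and follows essentially the same route as the paper: differentiate the equation in time, multiply by $u^\e_{tt}$, integrate by parts using the boundary conditions, bound $F''(u^\e)$ via the uniform bound on $u^\e$, absorb with Young's inequality, and close using the initial-data bound \eqref{eq:R[u0,u1]} together with the energy estimate \eqref{eq:ut-stima}. The only difference is cosmetic (your auxiliary energy $\mathcal{E}^\e$ versus the paper's direct inequality for $w^\e=u^\e_t$), and your remark on justifying the formal computation by approximation is a reasonable supplement to the paper, which simply assumes the needed regularity.
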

\begin{proof}
Denote by $w^\e=u^\e_t$.
From the assumptions on the regularity of the solution, $w^\e\in C([0,T], H^1(\Omega))\cap C^1([0,T], L^2(\Omega))$ 
and by differentiating the equation \eqref{eq:hypalca-multiD} with respect to $t$, we end up with
\begin{equation*}
	\e^2\tau w^\e_{tt}+w^\e_t=\Delta w^\e-\e^{-2}F''(u^\e)w^\e, \qquad \quad x \in\Omega,\, t>0.
\end{equation*}
The initial data for $w^\e$ are
\begin{equation*}
	w^\e(\cdot,0)=u_1, \qquad \e^{2}\tau w^\e_t(\cdot,0)=\Delta u^\e_0-\e^{-2}F'(u^\e_0)-u^\e_1, \qquad \quad x\in\Omega,
\end{equation*}
and the boundary conditions are 
\begin{equation*}
	\frac{\partial w^\e}{\partial n}(x,t)=0, \qquad \, x\in\partial\Omega, \, t>0,
\end{equation*}
in the case of homogeneous Neumann boundary conditions \eqref{eq:neumann} and
\begin{equation*}
	w^\e(x,t)=0, \qquad \, x\in\partial\Omega, \, t>0.
\end{equation*}
in the case of Dirichlet boundary conditions \eqref{eq:dirichlet}.
Multiplying the equation by $w^\e_t$ and integrating in $\Omega$, we obtain
\begin{align*}
	\frac{d}{dt}\int_\Omega\frac{\e^2\tau}{2}\left(w^\e_t\right)^2\,dx+\int_\Omega\left(w^\e_t\right)^2\,dx= & \int_\Omega \mathrm{div}(\nabla w^\e w^\e_t)\,dx\\
	&-\int_\Omega\nabla w^\e\nabla w^\e_t\,dx-\e^{-2}\int_\Omega F''(u^\e)w^\e w^\e_t\,dx.
\end{align*}
Using the divergence theorem and the boundary conditions, we deduce 
\begin{equation*}
	\frac12\frac{d}{dt}\left[\int_\Omega \e^2\tau \left(w^\e_t\right)^2\,dx+\int_\Omega|\nabla w^\e|^2\,dx\right]+\int_\Omega\left(w^\e_t\right)^2\,dx
	=-\e^{-2}\int_\Omega F''(u^\e)w^\e w^\e_t\,dx.
\end{equation*}
Since $|F''(u^\e)|\leq C$ for $(x,t)\in\Omega\times(0,T)$ (with $C$ independent on $\e$ for the assumption on the boundedness of the solution and the regularity of $F$), we infer
\begin{equation*}
	\frac12\frac{d}{dt}\left[\int_\Omega \e^2\tau \left(w^\e_t\right)^2\,dx+\int_\Omega|\nabla w^\e|^2\,dx\right]
	+\frac12\int_\Omega \left(w^\e_t\right)^2\,dx\leq \frac{C^2}2\e^{-4}\int_\Omega \left(w^\e\right)^2\,dx.
\end{equation*}
By integrating on $(0,T)$, we end up with
\begin{align*}
	\|w^\e_t(\cdot,t)\|^2_{L^2(\Omega)}+\|\nabla w^\e(\cdot,t)\|^2_{L^2(\Omega)}+\int_0^T\int_\Omega \left(w^\e_t\right)^2\,dxdt&\leq 
	C^2\e^{-4}\int_0^T\int_\Omega \left(w^\e\right)^2\,dxdt\\
	&\quad+R[u^\e_0,u^\e_1],
\end{align*}
for any $t\in(0,T)$.
In particular, we proved that
\begin{equation*}
	\|\nabla u^\e_t(\cdot,t)\|_{L^2(\Omega)}^2\leq C\e^{-4}\int_0^T\int_\Omega \left(u^\e_t\right)^2\,dxdt+R[u^\e_0,u^\e_1],
\end{equation*}
for any $t\in(0,T)$.
Recalling that from the assumptions \eqref{eq:energialimitata} (see \eqref{eq:ut-stima}) it follows that
\begin{equation*}
	\int_0^T\int_\Omega\left(u^\e_t\right)^2\,dxdt\leq C\e^{-1},
\end{equation*}
and using the assumption on $R[u^\e_0,u^\e_1]\leq C\e^{-5}\tau^{-1}$, we obtain \eqref{eq:grad-ut}.
\end{proof}

\subsection{Formal derivation of the interface motion equation}\label{sec:formal}
Theorem \ref{thm:compactness} asserts that some solutions $u^\e$ to the IBVP for the nonlinear damped hyperbolic Allen--Cahn equation \eqref{eq:hypalca-multiD}
take only the values $\pm1$ as $\e\to0$.
As we already mentioned, the main aim of the paper is to study the motion of the interface where
the solution $u^\e$ makes its transitions from $-1$ to $+1$.
The interface motion equation can be formally derived by means of asymptotic expansions and coincides with the mean curvature flow equation
(see \cite{Alf-Hil-Mat} or \cite{Hil-Nara}). 
In this subsection, we present this formal computation in the case of the nonlinear damped hyperbolic Allen--Cahn equation \eqref{eq:hypalca-multiD},
showing that the motion is governed by mean curvature flow for general damping coefficients $g$.
We shall assume that the steep interfaces are already developed. 

Let $u^\varepsilon$ be a solution to \eqref{eq:hypalca-multiD} where $f=-F'$, with $F$ satisfying \eqref{eq:ass-F}. 
Define
\begin{equation*}
	\Gamma^\varepsilon(t):=\left\{\bm x\in\Omega : u^\varepsilon(\bm x,t)=0\right\}, \qquad \Omega^\varepsilon_{\pm}(t):=\left\{\bm x\in\Omega: \pm u^\varepsilon(\bm x,t)>0\right\},
\end{equation*}
and the signed distance function
\begin{equation*}
	d^\varepsilon(\bm x,t):=
	\begin{cases}
		\textrm{dist}(\bm x,\Gamma^\varepsilon(t)), \qquad &\bm x\in\Omega^\varepsilon_+(t), \\
		0, \qquad &\bm x\in\Gamma^\varepsilon(t),\\
		-\textrm{dist}(\bm x,\Gamma^\varepsilon(t)), \qquad &\bm x\in\Omega^\varepsilon_-(t).
	\end{cases}
\end{equation*}
We assume that the function $d^\varepsilon$ has the following expansion
\begin{equation*}
	d^\varepsilon(\bm x,t)=\sum_{k=0}^\infty\varepsilon^kd_k(\bm x,t)=d_0(\bm x,t)+\varepsilon d_1(\bm x,t)+\varepsilon^2d_2(\bm x,t)+\cdots.
\end{equation*}
Observe that $|\nabla d^\varepsilon|=1$ in a neighborhood of $\Gamma^\varepsilon(t)$.
Here and in what follows $|\cdot|$ and $\cdot$ are the standard norm and inner product in $\R^n$. 
Then, by considering the terms of order $O(1)$ and the ones of $O(\varepsilon)$ in $|\nabla d^\varepsilon|^2=1$, we obtain
\begin{equation}\label{d0d1}
	|\nabla d_0|^2=1, \qquad \quad \nabla d_0\cdot \nabla d_1=0. 
\end{equation}
Formally, we study the motion of the interface in the limit $\varepsilon\to0$.
To this end, let us define 
\begin{equation*}
	\Gamma^0(t):=\left\{\bm x\in\Omega : d_0(\bm x,t)=0\right\}, \qquad \Omega^0_{\pm}(t):=\left\{\bm x\in\Omega : \pm d_0(\bm x,t)>0\right\}.
\end{equation*}
We want to show (formally) that the motion of $\Gamma^0$ is governed by mean curvature flow. 
Hence, let us formally derive the equation for the function $d_0$ describing the motion of $\Gamma^0(t)$. 
Following \cite{Alf-Hil-Mat, Hil-Nara}, consider the following expansion for the solution of \eqref{eq:hypalca-multiD}
\begin{equation*}
	u^\varepsilon(\bm x,t)=\sum_{k=0}^\infty\varepsilon^kU_k(\bm x,t,z)=U_0(\bm x,t,z)+\varepsilon U_1(\bm x,t,z)+\varepsilon^2U_2(\bm x,t,z)+\cdots
\end{equation*}
near the interface $\Gamma^\varepsilon(t)$, where $z:=d^\varepsilon(\bm x,t)/\varepsilon$.
Since we are looking for an approximate solution $u^\varepsilon$ such that $u^\varepsilon\approx\pm1$ on $\Omega^\varepsilon_\pm(t)$, we assume 
\begin{equation*}
	u^\varepsilon(\bm x,t)=\pm1+\varepsilon\phi^\pm_1(\bm x,t)+\varepsilon^2\phi^\pm_2(\bm x,t)+\cdots, \qquad \mbox{ on } \Omega^\varepsilon_\pm(t).
\end{equation*}
To make the expansions near and away the interface consistent, we require the following matching conditions
\begin{equation*}
	U_0(\bm x,t,\pm\infty)=\pm1, \qquad \quad U_k(\bm x,t,\pm\infty)=\phi^\pm_k(\bm x,t,\pm\infty), \quad k\geq1.
\end{equation*}
We normalize $U_0$ in such a way that $U_0(\bm x,t,0)=0$.
By direct computations, near the interface $\Gamma^\varepsilon(t)$ we have
\begin{align*}
	u^\varepsilon_t & = U_{0,t}+U_{0,z} \frac{d_{0,t}}{\varepsilon}+U_{0,z}d_{1,t}+\varepsilon U_{0,z}d_{2,t}+\varepsilon U_{1,t}+U_{1,t}d_{0,t}+\varepsilon U_{1,t}d_{1,t}+\cdots,\\
	u^\varepsilon_{tt} & =U_{0,tt}+U_{0,tz}\frac{d_{0,t}}{\varepsilon}+U_{0,zt} \frac{d_{0,t}}{\varepsilon}+U_{0,zz} \frac{d_{0,t}^2}{\varepsilon^2}+U_{0,z} \frac{d_{0,tt}}{\varepsilon}+\cdots,\\
	\Delta u^\varepsilon & = \Delta U_0+\frac2\varepsilon\nabla d_0\cdot\nabla U_{0,z}+U_{0,z}\frac{\Delta d_0}\varepsilon+U_{0,zz}\frac{|\nabla d_0|^2}{\varepsilon^2}+\frac2\varepsilon U_{0,zz}\nabla d_0\cdot\nabla d_1+\\
					& \qquad	+\varepsilon \Delta U_1+2\nabla d_0\cdot\nabla U_{1,z}+U_{1,z}\Delta d_0+U_{1,zz}\frac{|\nabla d_0|^2}{\varepsilon}+2 U_{0,zz}\nabla d_0\cdot\nabla d_1+\cdots,\\
	f(u^\varepsilon) & =f(U_0)+\varepsilon f'(U_0)U_1+O(\varepsilon^2),\\
	g(u^\varepsilon) & =g(U_0)+\varepsilon g'(U_0)U_1+O(\varepsilon^2).
\end{align*}
We substitute these expansions in \eqref{eq:hypalca-multiD} and collect the $\varepsilon^{-2}$ and $\varepsilon^{-1}$ terms. 
Since we have the terms $\varepsilon^2\tau u^\varepsilon_{tt}$ and $\varepsilon^{-2}f(u^\varepsilon)$, the only terms with $\varepsilon^{-2}$ are 
$f(U_0)$ and $U_{0,zz}|\nabla d_0|^2$.
Then, from \eqref{d0d1} it follows that $U_{0,zz}+f(U_0)=0$.
Combining this equation with the matching and normalization conditions, we obtain that $U_0$ is the unique solution to the problem
\begin{equation}\label{eq:U0}
	U_{0,zz}+f(U_0)=0, \qquad U_0(\bm x,t,0)=0, \qquad U_0(\bm x,t,\pm\infty)=\pm1.
\end{equation}
Therefore, $U_0(\bm x,t,z)=\Phi(z)$ where $\Phi$ is the standing wave profile.
For example, in the case $f(u)=u(1-u^2)$ we have $U_0(z)=\tanh(z/\sqrt2)$.
The first approximation of the profile of a transition layer around the interface is the solution $U_0$.
Note that the first approximation is the same of the parabolic case and does not depend on the damping coefficient $g$.

Next, by collecting the $\varepsilon^{-1}$ terms, we deduce
\begin{equation*}
	g(U_0) U_{0,z}\,d_{0,t}=2\nabla d_0\cdot\nabla U_{0,z}+U_{0,z}\Delta d_0+2U_{0,zz}\nabla d_0\cdot\nabla d_1+U_{1,zz}|\nabla d_0|^2+f'(U_0)U_1.
\end{equation*}
Using \eqref{d0d1} and $\nabla U_{0,z}=0$, we get
\begin{equation}\label{eq:U1}
	U_{1,zz}+f'(U_0)U_1=U_{0,z}\left\{g(U_0)d_{0,t}-\Delta d_0\right\}.
\end{equation}
The solvability condition for the linear equation of $U_1$ \eqref{eq:U1} plays the key role in determining the equation of interface motion.
In order to obtain the solvability condition for \eqref{eq:U1}, we use the following lemma (see \cite[Lemma 2.2]{Alf-Hil-Mat}). 
\begin{lem}[\cite{Alf-Hil-Mat}]
Let $A(z)$ be a bounded function on $\R$.
Then the problem 
\begin{equation*}
	\begin{cases}
		\psi_{zz}+f'(U_0)\psi=A, \qquad \quad z\in\R,\\
		\psi(0)=0, \qquad \psi\in L^\infty(\R),
	\end{cases}
\end{equation*}
has a solution if and only if 
\begin{equation}\label{solvability}
	\int_\R A(z) U'_0(z)\,dz=0.
\end{equation}
Moreover, if the solution exists, it is unique and satisfies
\begin{equation*}
	|\psi(z)|\leq C\|A\|_\infty \qquad \quad \textrm{ for } \, z\in\R,
\end{equation*}
for some constant $C>0$.
\end{lem}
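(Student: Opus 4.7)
The plan is to reduce this to a standard Fredholm alternative for the Sturm--Liouville operator $L:=\partial_z^2+f'(U_0(z))$ on $\mathbb{R}$. The key observation is that differentiating the standing wave equation \eqref{eq:U0} with respect to $z$ gives $L(U_0')=0$, so $\phi_1(z):=U_0'(z)$ lies in the kernel. Since $U_0(\pm\infty)=\pm 1$ and $F$ has nondegenerate wells at $\pm 1$ (so $f'(\pm 1)=-F''(\pm 1)<0$), a phase-plane analysis of \eqref{eq:U0} shows that $\phi_1$ is strictly positive, even, and decays exponentially as $|z|\to\infty$, with rates $\mu_\pm:=\sqrt{F''(\pm 1)}$.

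Next I would construct a second fundamental solution by reduction of order,
\begin{equation*}
\phi_2(z):=\phi_1(z)\int_0^z\frac{ds}{\phi_1(s)^2},
\end{equation*}
so that the Wronskian $W[\phi_1,\phi_2]\equiv 1$. Because $\phi_1$ decays exponentially at $\pm\infty$, the integrand blows up and $\phi_2$ grows exponentially at both ends, with rates matching those of $1/\phi_1$. By variation of parameters, every solution of $L\psi=A$ has the form
\begin{equation*}
\psi(z)=c_1\phi_1(z)+c_2\phi_2(z)+\phi_2(z)\int_0^z A(s)\phi_1(s)\,ds-\phi_1(z)\int_0^z A(s)\phi_2(s)\,ds.
\end{equation*}
The second integral $\phi_1(z)\int_0^z A\phi_2\,ds$ is automatically bounded by $C\|A\|_\infty$, because $\phi_1$ decays at the same exponential rate at which $\phi_2$ grows.

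The boundedness of $\psi$ therefore hinges on the first two terms. As $z\to+\infty$, $\phi_2(z)$ grows like $e^{\mu_+ z}$, so for $\psi\in L^\infty$ one needs the bracket $c_2+\int_0^z A\phi_1\,ds$ to vanish in the limit, i.e.\ $c_2=-\int_0^{+\infty}A\phi_1\,ds$. The analogous condition at $-\infty$ forces $c_2=\int_{-\infty}^{0}A\phi_1\,ds$. Consistency of these two equations is exactly the orthogonality condition \eqref{solvability}. When it holds, writing $\phi_2(z)\int_0^z A\phi_1\,ds=-\phi_2(z)\int_z^{+\infty}A\phi_1\,ds$ and exploiting the exponential decay of $\phi_1$ against the exponential growth of $\phi_2$ yields the uniform bound $C\|A\|_\infty$; the constant $c_1$ is then uniquely fixed by $\psi(0)=0$, using $\phi_1(0)>0$.

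For uniqueness, the difference of two bounded solutions of $L\psi=A$ satisfying $\psi(0)=0$ lies in $\ker L\cap L^\infty$, which is spanned by $\phi_1$ alone (since $\phi_2$ is unbounded), and the normalization at $z=0$ forces the multiple to be zero. The main obstacle I anticipate is the bookkeeping of the exponential decay/growth rates needed to justify the $L^\infty$ bound $|\phi_2(z)\int_z^{\pm\infty}A\phi_1\,ds|\leq C\|A\|_\infty$ uniformly in $z$; everything else follows from standard ODE manipulations once the matching rates $\mu_\pm$ of $\phi_1^{-1}$ and $\phi_2$ are matched.
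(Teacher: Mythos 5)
Your argument is correct in substance, but note that the paper itself does not prove this lemma at all: it simply cites \cite{Alf-Hil-Mat}, so there is no ``paper proof'' to match against. What you have written is essentially the standard ODE proof that the reference uses: $U_0'$ spans the bounded kernel of $L=\partial_z^2+f'(U_0)$, a second solution $\phi_2=U_0'\int_0^z (U_0')^{-2}$ grows exponentially at both ends, variation of parameters plus the requirement $\psi\in L^\infty(\R)$ at $z\to+\infty$ and $z\to-\infty$ pins down $c_2$ in two ways whose compatibility is exactly \eqref{solvability}, and the $L^\infty$ bound and uniqueness follow from the decay/growth matching and from $\phi_2(0)=0$, $U_0'(0)>0$. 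Two small inaccuracies are worth fixing. First, $U_0'$ is \emph{not} even in general (only when $F$ is symmetric about $0$); evenness plays no role in your argument, so simply drop that claim. Second, the exponential decay of $U_0'$ with rates $\sqrt{F''(\pm1)}$ requires the nondegeneracy $F''(\pm1)>0$, which is part of \eqref{eq:ass-F-radial} but not of the hypotheses \eqref{eq:ass-F} under which the formal computation of Section \ref{sec:formal} is carried out; you should state it explicitly as a hypothesis (it holds for the model potential $F(u)=\frac14(u^2-1)^2$ and in the setting of \cite{Alf-Hil-Mat}). With those two adjustments, and writing out the routine estimate $\bigl|\phi_2(z)\int_z^{\pm\infty}A\,U_0'\,ds\bigr|\leq C\|A\|_\infty$ that you already identify as the only bookkeeping step, the proof is complete and self-contained, which is arguably more than the paper offers for this statement.
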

For the proof of this lemma see \cite{Alf-Hil-Mat}.
By applying the solvability condition \eqref{solvability} in equation \eqref{eq:U1}, we have
\begin{equation}\label{solvability2}
	\int_\R U'_0(z)\left\{g(U_0(z))d_{0,t}(\bm x,t)-\Delta d_0(\bm x,t)\right\} U'_0(z)\,dz=0.
\end{equation}
Since $U_0$ solves the problem \eqref{eq:U0}, it follows that $U'_0(z)=\sqrt{2F(z)}$.
Substituting this equality in \eqref{solvability2} and using the change of variable $U_0(z)=s$, we obtain
\begin{equation*}
	\sqrt2\int_{-1}^{+1}\sqrt{F(s)}\left\{g(s)d_{0,t}(\bm x,t)-\Delta d_0(\bm x,t)\right\}ds=0,
\end{equation*}
and as a consequence
\begin{equation*}
	\left(\int_{-1}^{+1}\sqrt{F(s)}\,g(s)\,ds\right)d_{0,t}(\bm x,t)=\left(\int_{-1}^{+1}\sqrt{F(s)}\,ds\right)\Delta d_0({\bm x,t}). 
\end{equation*}
Introducing the (weighted) average $\overline{g}$ of the continuous function $g$:
\begin{equation*}
	\overline{g}:=\frac{1}{\|\sqrt{F}\|_{{}_{L^1}}}\int_{-1}^{1} \sqrt{F(s)}\,g(s)\,ds,
\end{equation*}
we conclude that the function $d_0$ satisfies the heat equation
\begin{equation}\label{heat-d0}
	\overline{g}\,d_{0,t}=\Delta d_0.
\end{equation}
This generalizes the formal computation of \cite{Hil-Nara}, where the case $g\equiv\gamma\in\R$ is considered. 
In the latter case, equation \eqref{heat-d0} becomes $\gamma\,d_{0,t}=\Delta d_0$.
We can do the same remarks of \cite{Hil-Nara}: 
since $|\nabla d_0|=1$ near $\Gamma^0(t)$, the terms $-d_{0,t}$ and $\Delta d_0$ are equivalent to the outward normal velocity and the mean curvature of $\Gamma^0(t)$, respectively.
Hence, \eqref{heat-d0} means that the motion of $\Gamma^0(t)$ is governed by mean curvature.
In conclusion, we have formally shown that in the limit $\varepsilon\to0^+$ the interface $\Gamma^\varepsilon(t)$ moves by mean curvature.

\section{Motion of the interface in the radial case}\label{sec:radial}
In this section we study the evolution of radial solutions to \eqref{eq:hypalca-multiD} with damping coefficient $g\equiv1$
and with boundary conditions of Dirichlet type.
In particular, the aim of this section is to state and prove the main result of the paper, Theorem \ref{cor:main}.
To do this, consider the damped wave equation with bistable nonlinearity
\begin{equation}\label{eq:damped-multiD}
	\e^2\tau u^\e_{tt}+u^\e_t=\Delta u^\e-\e^{-2}F'(u^\e), \qquad \quad x\in B(0,1), \, \,t>0, 
\end{equation}
where $B(0,1)=\left\{x\in \R^n : |x|\leq1\right\}$, $n=2$ or $n=3$. 
We assume that the parameter $\tau$ depends on $\e$ and that there exists a positive number $\mu\ll1$ such that
\begin{equation}\label{eq:tau-ugly}
	\tau(\e)=o(\e^\mu).
\end{equation}
This assumption is instrumental in the proof of our main result,
nevertheless, for the numerical solutions in Figures \ref{fig:eps=0.02}-\ref{fig:eps=0.01_t=250} the result is valid without restrictions on $\tau >0$; 
for further discussions, see Remark \ref{rem:ass-tau}.

The function $F$ is required to be a double well potential with wells of equal depth;
precisely, we assume that $F\in C^2(\R)$ satisfies \eqref{eq:ass-F} plus a nondegenerate condition on $F''(\pm1)$, namely
\begin{equation}\label{eq:ass-F-radial}
	F(\pm1)=F'(\pm1)=0, \quad F''(\pm1)>0, \quad F(s)>0 \; \mbox{ for } s\neq\pm1.
\end{equation}
We restrict our attention on radially symmetric solutions and so on the equation
\begin{equation}\label{eq:damped-radial}
	\e^2\tau u^\e_{tt}+u^\e_t=u^\e_{rr}+\frac{n-1}{r}u^\e_r-\e^{-2}F'(u^\e), \qquad \quad r\in(0,1), \,\,t>0, 
\end{equation}
which is equation \eqref{eq:damped-multiD} in radial coordinates. 
We consider the case of Dirichlet boundary condition
\begin{equation}\label{eq:boundary}
	u^\e(1,t)=1, \qquad \quad \forall\,t\geq0;
\end{equation}
moreover, at $r=0$ $u$ must satisfy $u^\e_r(0,t)=0$ for any $t\geq0$.
We consider the boundary value problem \eqref{eq:damped-radial}, \eqref{eq:boundary} subject to initial data
\begin{equation}\label{eq:initialdata-radial}
	u^\e(r,0)=u^\e_0(r), \qquad \quad u^\e_t(r,0)=u_1^\e(r), \qquad r\in(0,1),
\end{equation}
Fix $\rho_0\in(0,1)$, and assume that $u_0^\e$ has a \emph{1-transition layer structure} with transition from $-1$ to $+1$ in $r=\rho_0$.
Precisely, we assume that $u_0^\e$ converges in $L^1$ as $\e\to0^+$ to the function
\begin{equation*}
	\bar u(r):=\begin{cases} -1, \qquad r<\rho_0, \\ +1, \qquad r>\rho_0,\end{cases}
\end{equation*}
that is
\begin{equation}\label{eq:u0-ass}
	\lim_{\e\to0}\int_0^1\left|u_0^\e(r)-\bar u(r)\right|r^{n-1}\,dr=0,
\end{equation}
and that $u_0^\e$ makes the transition in a way such that
\begin{equation}\label{eq:energy-u}
	\int_{0}^{1}\left[\frac{\e^3\tau}{2}(u_1^\e)^2+\frac{\e}{2}(u^\e_0)_r^2+\e^{-1}F(u^\e_0)\right]\theta(r)\,dr\leq c_0+z(\e),
\end{equation}
where $z:\mathbb{R}^+\rightarrow\mathbb{R}^+$ is a positive function with $z=o(1)$ as $\e\to0^+$ and
\begin{equation}\label{eq:c0}
	c_0:=\int_{-1}^{1}\sqrt{2F(s)}\,ds, \qquad \quad\theta(r):=\exp\left\{-(n-1)\left(\frac{r}{\rho_0}-1\right)\right\}\left(\frac{r}{\rho_0}\right)^{n-1}.
\end{equation}
Observe that \eqref{eq:energy-u}-\eqref{eq:c0} imply that the energy \eqref{eq:energy} remains bounded, namely
\begin{equation*}
	E_\e[u_0^\e,u_1^\e]:=\int_0^1\left[\frac{\e^3\tau}{2} \left(u^\e_1\right)^2+\frac{\varepsilon}2 \left(u_0^\e\right)_r^2+\e^{-1}F(u^\e_0)\right]r^{n-1}dr\leq M,
\end{equation*}
and so, the condition \eqref{eq:energialimitata} is satisfied.

Moreover, in order to apply Proposition \ref{prop:higher}, we assume that the initial data are such that \eqref{eq:R[u0,u1]} holds.
Finally, as in Section \ref{sec:higher}, in the following we shall consider uniformly bounded in $\e$ solutions and so we shall assume that
there exists $C>0$ (independently on $\e$) such that
\begin{equation}\label{eq:u-bounded}
	\sup_{r\in (0,1)}|u^\e(r,t)|\leq C, \qquad \quad \forall\, t\geq0.
\end{equation}
\begin{rem}\label{rem:initial}
Let us briefly show how to construct functions $u^\e_0,u^\e_1$ satisfying the assumptions \eqref{eq:R[u0,u1]}, \eqref{eq:u0-ass} and \eqref{eq:energy-u}.
The requirement \eqref{eq:energy-u} trivially holds if $\e^3\tau\|u_1^\e\|^2_{L^2}\to0$ as $\e\to0^+$ 
and the initial datum $u_0^\e$ satisfies \eqref{eq:energy-u} with $\tau=0$, that is the assumption on the initial datum in \cite{Bron-Kohn2}.
An example of function $u_0^\e$ satisfying the assumptions of \cite{Bron-Kohn2} can be constructed as in \cite{Sternberg}.
It is easy to check that, if $u_0^\e$ is constructed as in \cite{Sternberg} and $u_1^\e$ is sufficiently small as $\e\to0^+$,
then $(u^\e_0,u^\e_1)$ satisfy the assumptions \eqref{eq:R[u0,u1]}, \eqref{eq:u0-ass} and \eqref{eq:energy-u}.
\end{rem}

Now, we can state the main results of this paper. 
\begin{thm}\label{thm:main}
Fix $\rho_0\in(0,1)$.
Let $\tau$ be as in \eqref{eq:tau-ugly}, $F$ satisfying \eqref{eq:ass-F-radial} and let $u^\e$ be the solution to \eqref{eq:damped-radial} 
with Dirichlet boundary condition \eqref{eq:boundary} and initial data \eqref{eq:initialdata-radial}.
Assume that $u_0^\e$, $u^\e_1$ satisfy  \eqref{eq:R[u0,u1]}, \eqref{eq:u0-ass}, \eqref{eq:energy-u}
and that \eqref{eq:u-bounded} holds.
Then, for any $T\in(0,T_{\max})$ 
\begin{equation}\label{eq:main}
	\lim_{\e\to0}\int_0^T\int_0^1\left|u^\e(r,t)-\omega^\e(r,t)\right|r^{n-1}\,dr\,dt=0,
\end{equation}
where $T_{\max}:=\rho_0^2/2(n-1)$, and 
\begin{equation*}
	\omega^\e(r,t)=\left\{\begin{array}{ll} -1, \qquad r<\rho^\e(t),\\
	+1, \qquad r>\rho^\e(t),
	\end{array}\right.
\end{equation*}
with $\rho^\e=\rho^\e(t)$ satisfying
\begin{equation}\label{eq:rho}
	\e^2\tau (\rho^\e)''+(\rho^\e)'+\frac{n-1}{\rho^\e}=0, \qquad \quad \rho^\e(0)=\rho_0\in(0,1), \quad (\rho^\e)'(0)=\nu_0\in\left[-\frac{n-1}{\rho_0},0\right].	
\end{equation}
\end{thm}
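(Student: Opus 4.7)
The approach adapts the weighted-energy method of Bronsard--Kohn \cite{Bron-Kohn2} to accommodate the inertial term $\e^2\tau u^\e_{tt}$. First I would apply Theorem \ref{thm:compactness} to extract a subsequence $u^{\e_j}$ converging a.e.\ and in $L^1$ to a function $v$ taking only the values $\pm 1$; since the equation and data are radial, $v(\cdot, t)$ is radial, and by \eqref{eq:boundary} we have $v(\cdot, t)=1$ near $r=1$. Hence \eqref{eq:main} reduces to showing that for a.e.\ $t \in (0, T_{\max})$, $v(\cdot, t)$ has exactly one transition sphere, located at $r = \lim_{\e \to 0}\rho^\e(t)$.

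The main device is the sliding weight
$$\theta^\e(r, t) := \exp\Bigl\{-(n-1)\bigl(\tfrac{r}{\rho^\e(t)} - 1\bigr)\Bigr\}\,\Bigl(\tfrac{r}{\rho^\e(t)}\Bigr)^{n-1},$$
with $\rho^\e$ solving \eqref{eq:rho}. This $\theta^\e(\cdot, t)$ attains its unique maximum value $1$ at $r = \rho^\e(t)$ and satisfies $\theta^\e_r = \theta^\e[(n-1)/r - (n-1)/\rho^\e]$. Define the weighted energy
$$G^\e(t) := \int_0^1 \left[\frac{\e^3\tau}{2}(u^\e_t)^2 + \frac{\e}{2}(u^\e_r)^2 + \e^{-1}F(u^\e)\right] \theta^\e(r,t)\,dr,$$
so $G^\e(0) \leq c_0 + z(\e)$ by \eqref{eq:energy-u}. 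Multiplying \eqref{eq:damped-radial} by $u^\e_t\theta^\e$, integrating by parts (the boundary terms vanish thanks to $u^\e_t(1,t)=0$ and $u^\e_r(0,t)=0$), and using the algebraic identity for $\theta^\e_r$ above yields, after substantial cancellation,
$$\dot G^\e(t) = -\e\int_0^1 (u^\e_t)^2\,\theta^\e\,dr + \frac{\e(n-1)}{\rho^\e(t)}\int_0^1 u^\e_r u^\e_t\,\theta^\e\,dr + I^\e_\theta(t),$$
where $I^\e_\theta$ is the contribution of $\partial_t\theta^\e$, pointwise proportional to $r-\rho^\e$. Inserting the sharp inner profile $u^\e \approx U_0((r-\rho^\e)/\e)$ from Section \ref{sec:formal}, the odd-even cancellation $\int z(U_0'(z))^2\,dz = 0$ renders $I^\e_\theta$ subleading, and the leading-order balance between the first two terms is precisely the ODE \eqref{eq:rho}; the inertial correction $\e^2\tau(\rho^\e)''$ accounts for the $\e^3\tau(u^\e_t)^2$ component. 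Time-integration then delivers $G^\e(t) \leq c_0 + o(1)$ uniformly on $[0, T]$.

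For the matching lower bound, Young's inequality gives $\frac{\e}{2}(u^\e_r)^2 + \e^{-1}F(u^\e) \geq |\partial_r\Psi(u^\e)|$ with $\Psi$ from \eqref{eq:Psi}, so
$$G^\e(t) \geq \int_0^1 |\partial_r\Psi(u^\e(r,t))|\,\theta^\e(r,t)\,dr.$$
Each transition sphere of $v(\cdot, t)$ at some $r = \rho^*(t)$ contributes at least $c_0\,\theta^\e(\rho^*(t), t) + o(1)$ to this integral, since $\Psi(+1) - \Psi(-1) = c_0$. Combined with $G^\e(t) \leq c_0 + o(1)$ and the fact that $\theta^\e \leq 1$ with equality only at $\rho^\e$, this forces $v(\cdot, t)$ to have exactly one transition, located at $\rho^\e(t) + o(1)$, which yields \eqref{eq:main}.

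The main obstacle is the rigorous justification of the calculation of $\dot G^\e$: the genuinely hyperbolic mixed terms $u^\e_t u^\e_{tt}$ and $(u^\e_t)^2\,\theta^\e_t$ have no parabolic counterpart and require Proposition \ref{prop:higher} (for $\|\nabla u^\e_t\|_{L^2}$) together with Cauchy--Schwarz to control. The smallness assumption \eqref{eq:tau-ugly} on $\tau$ is precisely what makes all $\e^2\tau$-weighted corrections strictly subleading and the cancellation scheme viable. The time restriction $T < T_{\max} = \rho_0^2/[2(n-1)]$ reflects the extinction time of the parabolic limit $(\rho^\e)' = -(n-1)/\rho^\e$ of \eqref{eq:rho}, beyond which $\rho^\e(t)$ reaches zero and $\theta^\e$ degenerates.
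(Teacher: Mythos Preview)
Your overall architecture is right—compactness via Theorem \ref{thm:compactness}, then a sharp weighted-energy upper bound matched against the Modica--Mortola lower bound—but the upper-bound step contains a genuine gap. The sentence ``Inserting the sharp inner profile $u^\e \approx U_0((r-\rho^\e)/\e)$ \dots\ the odd-even cancellation $\int z(U_0'(z))^2\,dz = 0$ renders $I^\e_\theta$ subleading, and the leading-order balance between the first two terms is precisely the ODE \eqref{eq:rho}'' is circular: the profile structure of $u^\e$ near the interface is exactly the conclusion of Theorem \ref{thm:main}, so it cannot be invoked as an input. Without that ansatz, neither the cross term $\tfrac{\e(n-1)}{\rho^\e}\int u^\e_r u^\e_t\,\theta^\e\,dr$ nor $I^\e_\theta$ has a sign, and a crude Cauchy--Schwarz/Young argument only yields $G^\e(t)\le e^{Ct}G^\e(0)$, which is far too weak: you need the \emph{sharp} bound $G^\e(t)\le c_0+o(1)$ for the single-transition lower bound to bite.

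The paper avoids this circularity by a different mechanism. It passes to the moving frame $R=r-\rho^\e(t)$ and chooses the weight $\phi$ (which, unlike your $\theta^\e$, carries the hyperbolic correction $(1-\e^2\tau(\rho')^2)^{-1}$ in the exponent) precisely so that, once $\rho^\e$ solves \eqref{eq:rho}, the first-order term in the transformed PDE becomes exactly $(1-\e^2\tau(\rho')^2)(\phi v_R)_R/\phi$. This makes the differential inequality for $E_\phi$ hold \emph{at the level of the exact PDE}, with no profile assumption; the only residual error $J^\e$ is supported in the boundary layer $\{0<r<\e^{2+\mu}\}$ near the origin (not near the interface) and is controlled rigorously by Proposition \ref{prop:higher} combined with H\"older's inequality and Sobolev embedding—this is where \eqref{eq:tau-ugly} actually enters. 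A further quantitative step you omit is the inductive bootstrap (the paper's Proposition \ref{prop:main-v}) that converts the energy inequality into $d^\e(t)\le C\max\{\e^{1/4},\sqrt{y(\e)}\}$ on all of $[0,T]$; without it the lower bound \eqref{eq:lower} is only available for times where $d^\e$ is already known to be small.
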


\begin{thm}\label{cor:main}
Under the same assumptions of Theorem \ref{thm:main}, we have
\begin{equation}\label{eq:corollary}
	\lim_{\e\to0}\int_0^T\int_0^1\left|u^\e(r,t)-\omega^0(r,t)\right|r^{n-1}\,dr\,dt=0,
\end{equation}
where
\begin{equation*}
	\omega^0(r,t)=\left\{\begin{array}{ll} -1, \qquad r<\rho^o(t),\\
	+1, \qquad r>\rho^o(t),
	\end{array}\right.
\end{equation*}
with $\rho^o(t)=\sqrt{\rho_0^2-2(n-1)t}$.
\end{thm}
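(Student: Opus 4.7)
The plan is to deduce Theorem \ref{cor:main} from Theorem \ref{thm:main} by proving that the $\e$-dependent transition radius $\rho^\e$ converges to $\rho^o(t)=\sqrt{\rho_0^2-2(n-1)t}$ in $L^1(0,T)$. First, I would split
\begin{equation*}
\int_0^T\!\!\int_0^1 |u^\e-\omega^0|\,r^{n-1}\,dr\,dt\le \int_0^T\!\!\int_0^1|u^\e-\omega^\e|\,r^{n-1}\,dr\,dt+\int_0^T\!\!\int_0^1|\omega^\e-\omega^0|\,r^{n-1}\,dr\,dt.
\end{equation*}
The first term on the right vanishes as $\e\to 0$ by Theorem \ref{thm:main}. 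For the second, since both $\omega^\e$ and $\omega^0$ are radial step functions with values $\pm 1$ and transitions at $\rho^\e(t)$ and $\rho^o(t)$ respectively, a direct computation gives
\begin{equation*}
\int_0^1|\omega^\e(r,t)-\omega^0(r,t)|\,r^{n-1}\,dr= 2\!\!\int_{\min\{\rho^\e,\rho^o\}}^{\max\{\rho^\e,\rho^o\}}\!\!r^{n-1}\,dr\le 2\,|\rho^\e(t)-\rho^o(t)|.
\end{equation*}
So the whole problem collapses to the scalar $L^1(0,T)$ convergence $\rho^\e\to\rho^o$.

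Second, I would view \eqref{eq:rho} as a singular perturbation. The function $\rho^o$ is precisely the solution of the reduced first-order ODE $(\rho^o)'+(n-1)/\rho^o=0$, $\rho^o(0)=\rho_0$, obtained by formally setting $\e^2\tau=0$ in \eqref{eq:rho}; under the standing assumption $\tau=o(\e^\mu)$, this coefficient tends to zero. Rewriting \eqref{eq:rho} as the slow-fast system $\rho'=v$, $\e^2\tau\,v'=-v-(n-1)/\rho$, the critical manifold $\{v=-(n-1)/\rho\}$ is exponentially attracting---the linearization of the fast equation in $v$ has eigenvalue $-(\e^2\tau)^{-1}$---and the reduced solution $\rho^o$ stays smooth and bounded away from zero on any $[0,T]\subset[0,T_{\max})$. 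A Tikhonov-type singular perturbation argument then delivers uniform convergence $\rho^\e\to\rho^o$ on every $[\delta,T]\subset(0,T_{\max})$. To handle the initial interval $[0,\delta]$, I would multiply \eqref{eq:rho} by $(\rho^\e)'$, obtaining the dissipation identity
\begin{equation*}
\frac{d}{dt}\left[\frac{\e^2\tau}{2}((\rho^\e)')^2+(n-1)\ln\rho^\e\right]=-((\rho^\e)')^2,
\end{equation*}
which gives a uniform lower bound $\rho^\e\ge c>0$ on $[0,T]$ together with $\int_0^T((\rho^\e)')^2\,dt\le C$; by Cauchy--Schwarz, $|\rho^\e(t)-\rho_0|\le C\sqrt{t}$, so $\int_0^\delta|\rho^\e-\rho^o|\,dt=O(\delta^{3/2})$ uniformly in $\e$. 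Sending $\e\to 0$ and then $\delta\to 0$ closes the argument.

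The main obstacle is the initial boundary layer: the prescribed initial velocity $\nu_0$ is only required to lie in $[-(n-1)/\rho_0,0]$ and need not match the quasi-stationary slope $-(n-1)/\rho_0$ of the slow manifold. Consequently solutions of \eqref{eq:rho} perform a fast velocity adjustment on the time scale $O(\e^2\tau)$ near $t=0$, and the delicate step is to show that this transient is negligible in $L^1(0,T)$---precisely what the energy-dissipation estimate above accomplishes, allowing the outer singular-perturbation convergence to be patched to the prescribed initial data.
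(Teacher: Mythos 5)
Your reduction is exactly the paper's: the triangle inequality, the elementary bound $\int_0^1|\omega^\e-\omega^0|\,r^{n-1}\,dr\le 2\,|\rho^\e(t)-\rho^o(t)|$, and Theorem \ref{thm:main} for the first term, so everything rests on $\sup_{t\in[0,T]}|\rho^\e(t)-\rho^o(t)|\to0$. The paper does not re-prove this convergence here: it is precisely \eqref{eq:rho-rho0} of Lemma \ref{lem:epstau-rho}, established earlier via the invariant region of Lemma \ref{lem:prop-rho} and a Gr\"onwall argument, so at this stage it can simply be cited.

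Your substitute argument for the ODE convergence contains a circular step. The dissipation identity
\begin{equation*}
\frac{d}{dt}\Bigl[\frac{\e^2\tau}{2}\bigl((\rho^\e)'\bigr)^2+(n-1)\ln\rho^\e\Bigr]=-\bigl((\rho^\e)'\bigr)^2
\end{equation*}
does not yield either of the two bounds you extract from it: integrating gives
$\int_0^t\bigl((\rho^\e)'\bigr)^2\,ds\le \frac{\e^2\tau}{2}\nu_0^2+(n-1)\ln\bigl(\rho_0/\rho^\e(t)\bigr)$,
so controlling the dissipation requires the lower bound on $\rho^\e$ and vice versa; since $\ln\rho$ is unbounded below, the identity is perfectly consistent with $\rho^\e$ collapsing to $0$ (which indeed happens at $T^\e_m$). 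The uniform lower bound on $[0,T]$, $T<\T$, is not an energy fact but an invariant-region fact: because $\nu_0\in[-(n-1)/\rho_0,0]$, the region $\Gamma=\{\rho>0,\ -(n-1)/\rho\le\rho'\le0\}$ is invariant for \eqref{eq:rho}, whence $\rho^\e(t)\ge\sqrt{\rho_0^2-2(n-1)t}$ and $|(\rho^\e)'|\le(n-1)/\rho^\e\le M_T^{1/2}$ (Lemma \ref{lem:prop-rho}); with these bounds your Cauchy--Schwarz patch on $[0,\delta]$ and the Tikhonov step on $[\delta,T]$ both go through (the Tikhonov argument also needs this a priori confinement to keep the slow flow away from $\rho=0$). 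Note finally that once the initial velocity lies in the basin of attraction of the critical manifold, Tikhonov-type theorems give uniform convergence of the slow variable $\rho^\e$ on all of $[0,T]$; the boundary layer affects only $\nu=(\rho^\e)'$ (compare \eqref{eq:nu-rho0'}), so the separate treatment of $[0,\delta]$ is not actually needed.
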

Theorem \ref{cor:main} 
shows that the formal computation given in Section \ref{sec:formal} 
is asymptotically correct in the radial case, for certain boundary conditions and initial data.
Indeed, as $\e$ goes to $0$, the motion of the ``transition sphere'' is governed by the mean curvature equation.
However, in order to prove the result in the hyperbolic setting, 
 we need to use the equation \eqref{eq:rho} which takes into account also the inertial term $\e^2\tau\rho''$ as shown in Theorem \ref{thm:main}.

The rest of the paper is devoted to prove the previous theorems.
To do this, we need some preliminary results.

\subsection{Study of the ODE}\label{subsec:rho}
First of all, let us study the behavior of the solutions to \eqref{eq:rho}.
From now on, to simplify notation we write $\rho$ instead of $\rho^\e$.
Formally, for $\e\tau=0$ we obtain 
\begin{equation*}
	(\rho^o)'+\frac{n-1}{\rho^o}=0, \qquad \quad \rho^o(0)=\rho_0.
\end{equation*}
and then we have $\rho^o(t)=\sqrt{\rho_0^2-2(n-1)t}$, which is defined for $t\in[0,T_{\max}]$, 
where 
\begin{equation}\label{eq:Tmax}
	T_{\max}:=\frac{\rho^2_0}{2(n-1)}.
\end{equation}
In particular, we can say that there exists a finite time $T_{\max}$ such that $\rho^o(\T)=0$ and 
\begin{equation*}
	\lim_{t\rightarrow\T}\rho^o(t)'=-\infty.
\end{equation*}
The following result collects some properties of the solutions to \eqref{eq:rho} that we will use later.
\begin{lem}\label{lem:prop-rho}
Let $(\rho,\rho')$ the solution to \eqref{eq:rho} and let $\T$ be the constant defined in \eqref{eq:Tmax}.
Then, there exists $T^\e_m\in[\T,\rho_0/|\nu_0|]$ such that $\rho(T^\e_m)=0$.
Moreover, we have
\begin{equation}\label{eq:inv-reg}
	\rho'(t)\leq0, \qquad \rho(t)\rho'(t)+n-1\geq0, \qquad \quad \forall\, t\in[0,T^\e_m),
\end{equation}
and for any (fixed) $T\in(0,\T)$,
\begin{equation}\label{eq:rho'-limitato}
	\rho'(t)^2\leq \frac{(n-1)^2}{\rho_0^2-2(n-1)T}=:M_T,
\end{equation}
for any $t\in[0,T]$.
\end{lem}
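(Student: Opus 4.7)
The plan is to base the proof on the single auxiliary quantity $\Phi(t) := \rho(t)\rho'(t) + (n-1)$, which vanishes identically in the formal parabolic limit ($\e^2\tau = 0$) and whose sign will control all three assertions. I would first multiply the ODE in \eqref{eq:rho} by $\rho$ and differentiate to find that $\Phi$ solves the linear first-order equation
\begin{equation*}
	\Phi'(t) + \frac{1}{\e^2\tau}\Phi(t) = \rho'(t)^2.
\end{equation*}
Since the source is non-negative and the assumption $\nu_0 \in [-(n-1)/\rho_0, 0]$ gives $\Phi(0) = \rho_0\nu_0 + (n-1) \geq 0$, Duhamel's formula yields $\Phi(t) \geq 0$ for every $t$ in the maximal existence interval. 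This is exactly the second inequality of \eqref{eq:inv-reg}.

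From $\Phi \geq 0$ the multiplied ODE reads $\rho\rho'' = -\Phi/(\e^2\tau) \leq 0$, so $\rho'' \leq 0$ on $[0, T^\e_m)$. Hence $\rho'$ is non-increasing, and combined with $\rho'(0) = \nu_0 \leq 0$ this yields the first inequality of \eqref{eq:inv-reg}. The lower bound $T^\e_m \geq \T$ then follows by integrating $(\rho^2)' = 2\rho\rho' \geq -2(n-1)$, which gives $\rho(t)^2 \geq \rho_0^2 - 2(n-1)t$ and so prevents $\rho$ from vanishing before $\T$. For the upper bound $T^\e_m \leq \rho_0/|\nu_0|$ (read as $+\infty$ when $\nu_0 = 0$), the monotonicity $\rho'(t) \leq \nu_0$ gives $\rho(t) \leq \rho_0 + \nu_0 t$; when $\nu_0 = 0$, the identity $\rho''(0) = -(n-1)/(\e^2\tau\rho_0) < 0$ makes $\rho'$ strictly negative immediately, and the same argument still forces $\rho$ to reach zero in finite time.

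For the quadratic bound \eqref{eq:rho'-limitato}, I would rewrite $\Phi(t) \geq 0$ together with $\rho'(t) \leq 0$ as $|\rho'(t)| \leq (n-1)/\rho(t)$, and then insert the lower bound $\rho(t)^2 \geq \rho_0^2 - 2(n-1)T$ valid for $t \in [0,T]$ with $T < \T$. The main conceptual step is the introduction of $\Phi$ and the derivation of its linear evolution equation; once one notices that the admissible interval $[-(n-1)/\rho_0, 0]$ for $\nu_0$ is precisely what ensures $\Phi(0) \geq 0$, each of the three conclusions reduces to a one-line consequence, and no serious obstacle remains.
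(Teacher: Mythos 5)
Your proof is correct, and it reaches the paper's conclusions by a slightly different route. The paper rewrites \eqref{eq:rho} as the first-order system for $(\rho,\nu)=(\rho,\rho')$ and simply \emph{asserts} that the region $\Gamma=\{\rho>0,\ -(n-1)/\rho\le\nu\le0\}$ is invariant, from which it reads off $-(n-1)/\rho(t)\le\rho'(t)\le\nu_0\le0$ and then derives exactly the same three estimates you do: $\rho(t)\le\rho_0+\nu_0t$ (upper bound on $T^\e_m$), $\rho(t)^2\ge\rho_0^2-2(n-1)t$ (lower bound $T^\e_m\ge\T$ and \eqref{eq:rho'-limitato}). What you do differently is to \emph{prove} that invariance analytically: your $\Phi=\rho\rho'+(n-1)$ is precisely the defining quantity of the lower boundary of $\Gamma$, the identity $\Phi'+\Phi/(\e^2\tau)=(\rho')^2$ with Duhamel gives $\Phi\ge0$ (the second inequality of \eqref{eq:inv-reg}) without any sign assumption on $\rho'$, so there is no circularity, and concavity $\rho''=-\Phi/(\e^2\tau\rho)\le0$ then yields $\rho'\le\nu_0\le0$, i.e.\ the other half of the invariance. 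This buys a self-contained verification of the step the paper leaves implicit, and you also treat the degenerate case $\nu_0=0$ (where $\rho_0/|\nu_0|=+\infty$ and one must still argue that $\rho$ vanishes in finite time via $\rho''(0)<0$), which the paper's proof, restricted to $-(n-1)/\rho_0<\nu_0<0$, glosses over; what the paper's phase-plane formulation buys in exchange is brevity and a geometric picture that also explains the behavior of data outside $\Gamma$ (its Figure 6). One small point of care worth keeping: the solution of the system can only cease to exist when $\rho\to0$, since $\Phi\ge0$ bounds $|\rho'|$ by $(n-1)/\rho$ as long as $\rho$ stays away from zero — your argument contains this implicitly, and it is what justifies calling $T^\e_m$ the time at which $\rho$ vanishes.
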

\begin{proof}
Rewrite equation \eqref{eq:rho} as the first order system
\begin{equation}\label{eq:rho-rho'}
	\begin{cases}
		\rho'=\nu\\
		\e^2\tau \nu'=-\nu-\frac{n-1}{\rho}
	\end{cases}, \qquad \quad 
	\rho(0)=\rho_0, \quad \nu(0)=\nu_0.
\end{equation}
Denote by $[0,T^\e_m)$ the maximal interval where the solution $(\rho,\nu)$ exists.
The region $$\Gamma:=\{(\rho,\nu) : \rho>0, \; -\frac{(n-1)}{\rho}\leq \nu\leq 0\}$$ is invariant for \eqref{eq:rho}, and in particular
\begin{equation*}
  	(\rho_0,\nu_0)\in \Gamma \qquad  \Longrightarrow \qquad  -\frac{(n-1)}{\rho(t)}\leq \nu(t) \leq \nu_0, \quad \forall\, t\in[0,T^\e_m).
\end{equation*}
It follows that if $\rho_0\in(0,1)$ and $-(n-1)/\rho_0< \nu_0< 0$, then $\rho'(t)\leq \nu_0$ for any $t\in[0,T^\e_m)$.
Therefore, 
\begin{equation*}
	\rho(t)\leq \rho_0+\nu_0t.
\end{equation*}
Hence, since $\nu_0<0$, we have $T^\e_m\leq-\rho_0/\nu_0$. 
On the other hand, in $\Gamma$ we have $\nu\geq-(n-1)/\rho$ and so, $\rho'(t)\geq -(n-1)/\rho(t)$ for any $t\in[0,T^\e_m)$.
This implies 
\begin{equation*}
	\frac12\rho(t)^2-\frac12\rho_0^2\geq-(n-1)t,
\end{equation*}
and, as a consequence
\begin{equation*}
	\rho(t)\geq \sqrt{\rho_0^2-2(n-1)t}.
\end{equation*}
Combining the two estimates for $\rho(t)$, we end up that there exists $T^\e_m\in[\T,\rho_0/|\nu_0|]$ such that $\rho(T^\e_m)=0$.

The properties \eqref{eq:inv-reg} and \eqref{eq:rho'-limitato} follow from the invariance of the region $\Gamma$.
In particular, for any (fixed) $T\in(0,T^\e_m)$, we deduce
\begin{equation*}
	\rho'(t)^2\leq\frac{(n-1)^2}{\rho(t)^2}\leq\frac{(n-1)^2}{\rho_0^2-2(n-1)t}\leq\frac{(n-1)^2}{\rho_0^2-2(n-1)T}=:M_T,
\end{equation*}
for any $t\in[0,T]$.
\end{proof}
Lemma \ref{lem:prop-rho} ensures that the radius $\rho$ vanishes in a finite time $T^\e_m$ and 
we will make use of properties \eqref{eq:inv-reg}, \eqref{eq:rho'-limitato} that hold for any $t\in[0,T]$ with $T<T^\e_m$. 
In Figure \ref{fig:rho=0.6eps=0.02} we show the solution to \eqref{eq:rho} for a particular choice of the parameters and of the initial data in the case $n=2$.
\begin{figure}[htbp]
\centering
\includegraphics[width=5.5cm,height=4.5cm]{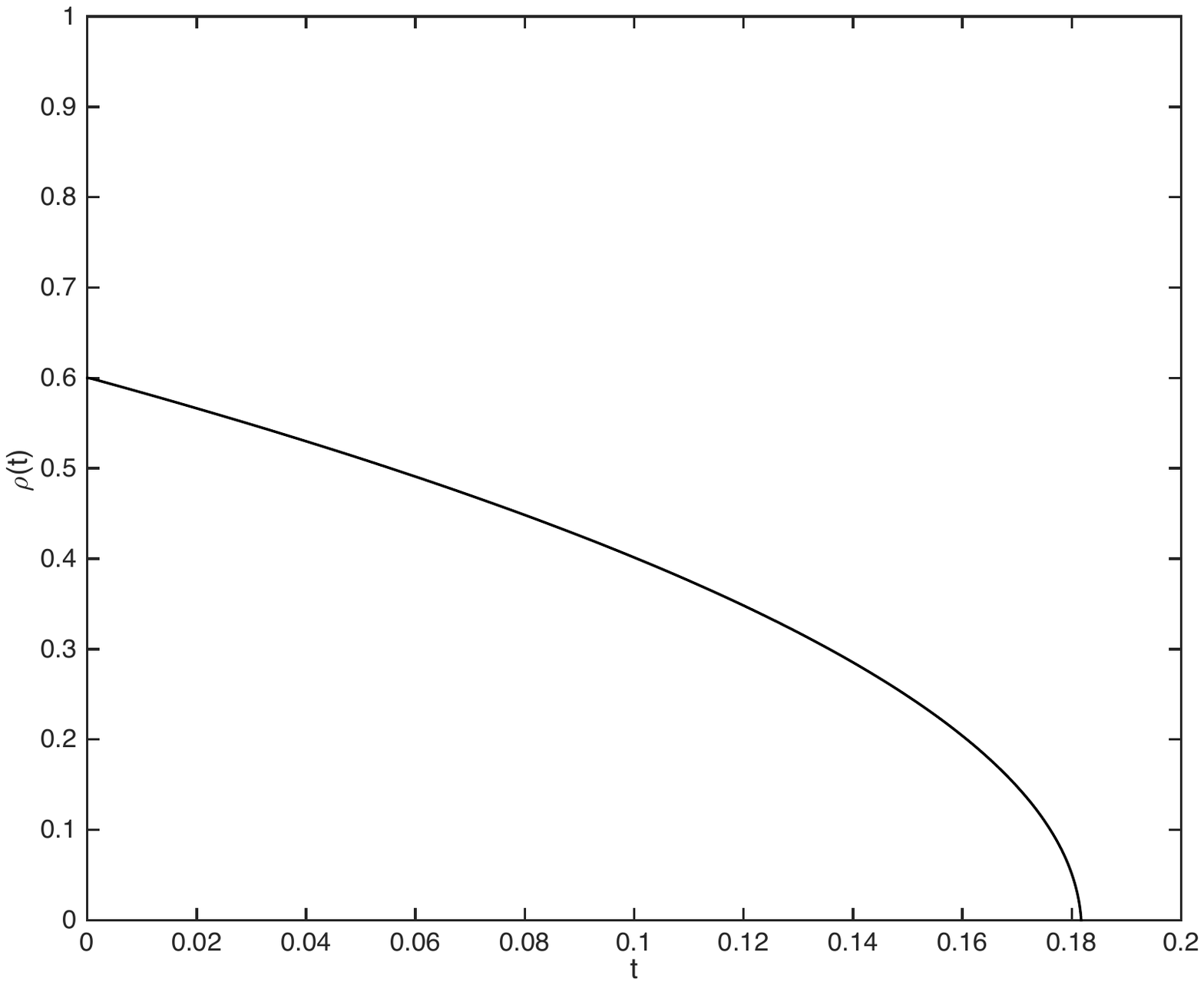}\qquad\quad
\includegraphics[width=5.4cm,height=4.55cm]{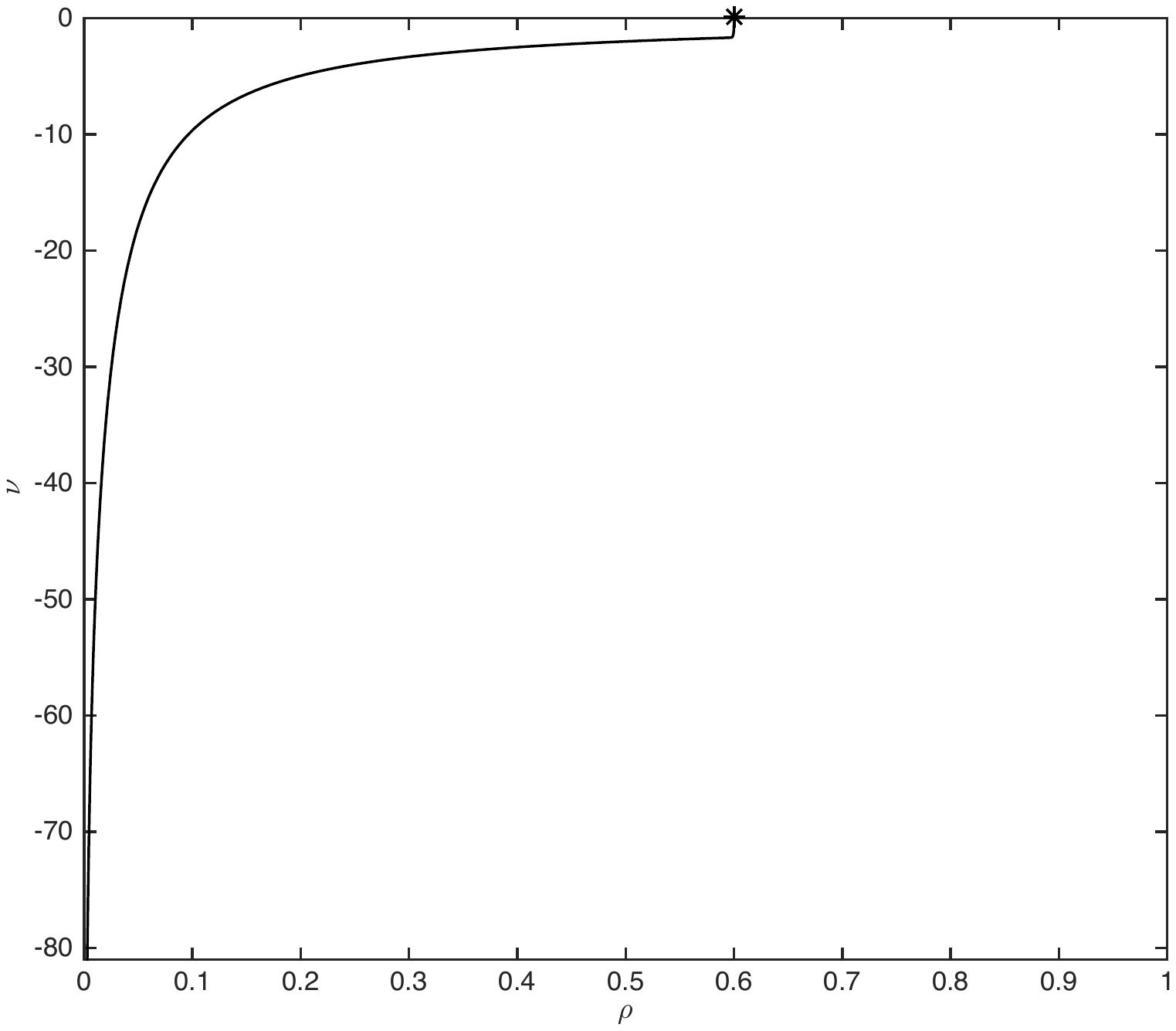}
\caption{Solution to \eqref{eq:rho} with $\tau=1$, $\e=0.02$ and initial data: $\rho_0=0.6$, $\nu_0=0$.}
\label{fig:rho=0.6eps=0.02}
\end{figure}

Let us underline that the behavior of the solutions to \eqref{eq:rho} is described by the ones to $\rho'=-(n-1)/\rho$ as $\eta:=\e^2\tau\to0$ 
and then, $T^\e_m$ tends to $\T$ as $\eta\to0$. 
In Figure \ref{fig:rho=0.6eps}, we show the solutions to \eqref{eq:rho} for $n=2$ with the same initial data, the same parameter $\tau$
and two different values of $\e$. 
Observe that for $\e=0.01$, $T^\e_m$ is very close to $\T$.
Precisely, we have the following result.
\begin{figure}[htbp]
\centering
\includegraphics[width=6cm,height=3cm]{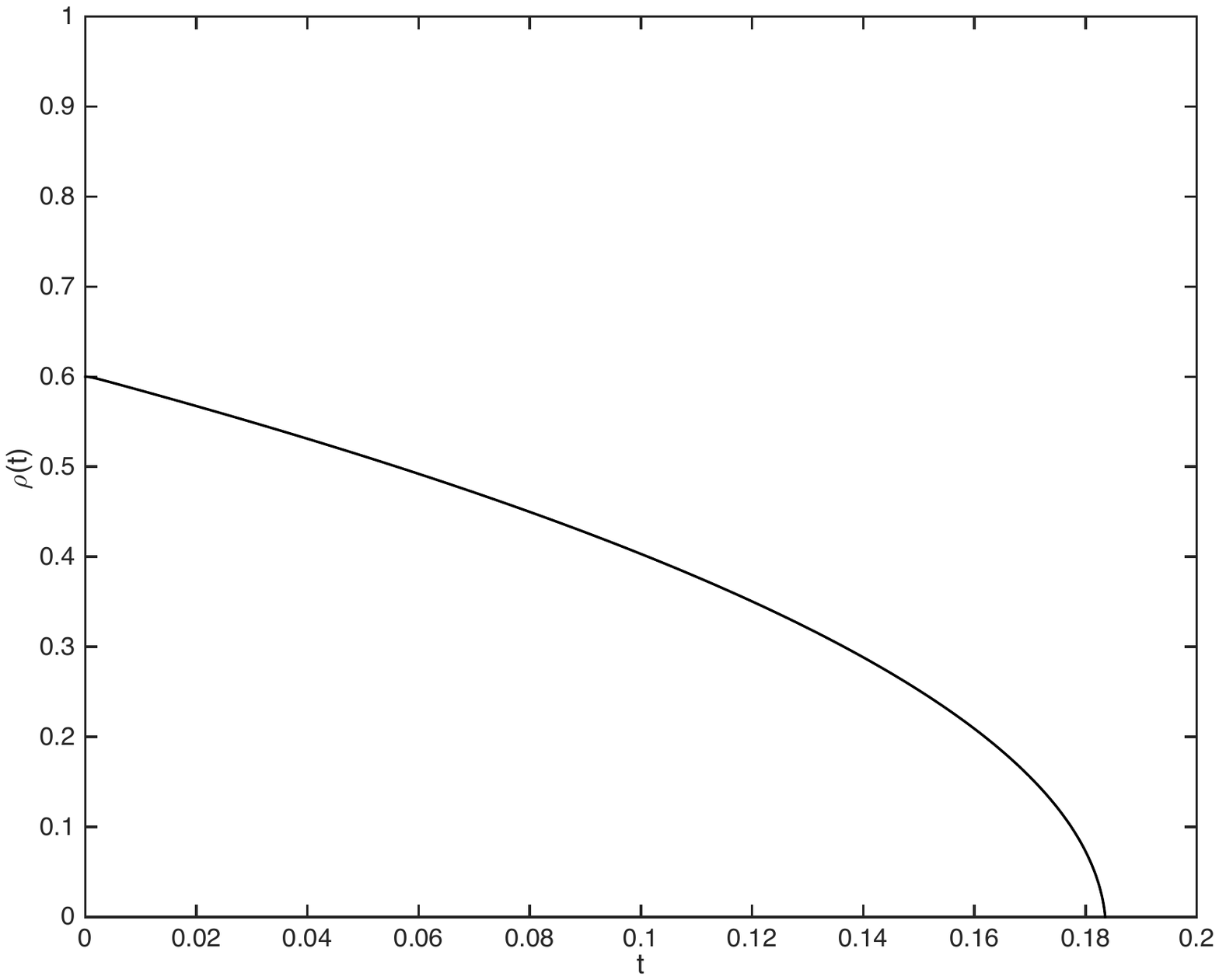}\qquad\quad
\includegraphics[width=6cm,height=3cm]{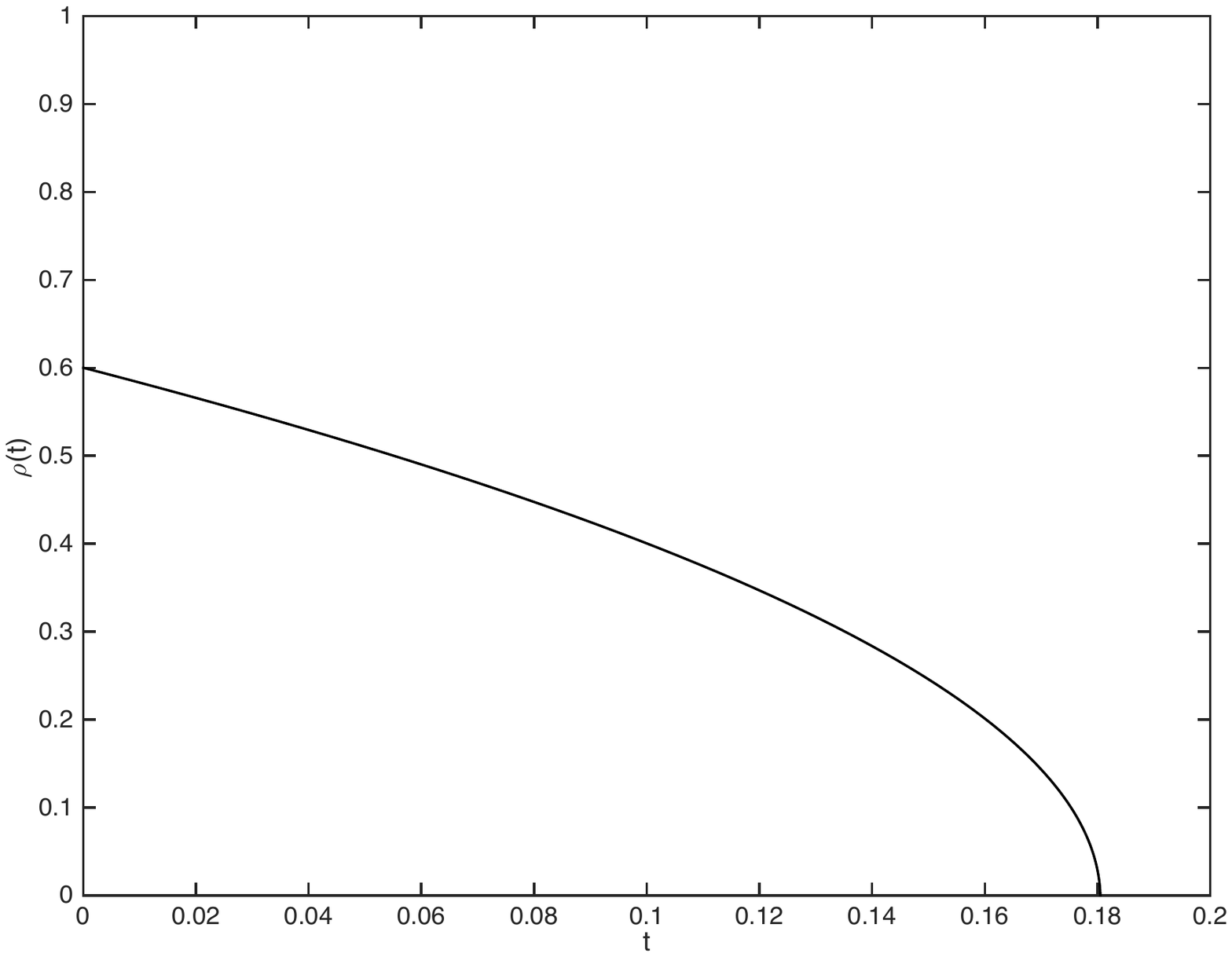}
\caption{Solution to \eqref{eq:rho} with initial data $\rho_0=0.6$, $\nu_0=0$ and $\tau=1$ for different values of $\e$:
		left $\e=0.03$, right $\e=0.01$.}
\label{fig:rho=0.6eps}
\end{figure}
\begin{lem}\label{lem:epstau-rho}
Fix $T\in(0,\T)$.
Let $(\rho,\nu)$ be the solution to \eqref{eq:rho} for $\eta := \e^2\tau$ on $(0,T)$ and $\rho^o(t)=\sqrt{\rho_0-2(n-1)t}$, that is the solution to \eqref{eq:rho} when $\eta=0$.
Then 
\begin{align}
	\lim_{\eta\rightarrow0}\sup_{t\in[0,T]}|\rho(t)-\rho^o(t)|&=0, \label{eq:rho-rho0}\\
	\lim_{\eta\rightarrow0}\sup_{t\in[t_1,T]}\left|\nu(t)+\frac{n-1}{\rho^o(t)}\right|&=0, \label{eq:nu-rho0'}
 \end{align}
for any $t_1\in(0,T)$.
\end{lem}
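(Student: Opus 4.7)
The plan is to treat the system \eqref{eq:rho-rho'} as a singularly perturbed problem in the small parameter $\eta := \e^2\tau$, so that the relation $\eta \nu' + \nu + (n-1)/\rho = 0$ degenerates to the algebraic slow manifold $\nu = -(n-1)/\rho$ as $\eta \to 0$. From Lemma \ref{lem:prop-rho} I already have the $\eta$-independent lower bound $\rho(t) \geq \rho^o(t) \geq \rho^o(T) > 0$ together with the uniform bound $|\nu(t)| \leq \sqrt{M_T}$ on $[0,T]$; these are the a priori estimates on which everything rests.

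To prove \eqref{eq:rho-rho0} I will subtract the equations $\rho' = \nu$ and $(\rho^o)' = -(n-1)/\rho^o$, use $\nu = -(n-1)/\rho - \eta \nu'$ to rewrite the first, and integrate from $0$ to $t$. Splitting $(n-1)/\rho^o - (n-1)/\rho = (n-1)(\rho - \rho^o)/(\rho \rho^o)$ and recalling $\rho(0) = \rho^o(0) = \rho_0$, this gives
\begin{equation*}
\rho(t) - \rho^o(t) = -\eta\bigl(\nu(t) - \nu_0\bigr) + (n-1) \int_0^t \frac{\rho(s) - \rho^o(s)}{\rho(s)\,\rho^o(s)} \, ds.
\end{equation*}
Since $|\nu(t) - \nu_0| \leq 2\sqrt{M_T}$ and the kernel $1/(\rho\rho^o)$ is bounded by $1/\rho^o(T)^2$ on $[0,T]$, Gronwall's lemma yields $\sup_{[0,T]} |\rho - \rho^o| \leq C\eta$, which is exactly \eqref{eq:rho-rho0}.

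For \eqref{eq:nu-rho0'} I will solve the linear scalar ODE for $\nu$ explicitly via the integrating factor $e^{t/\eta}$, obtaining
\begin{equation*}
\nu(t) = \nu_0 \, e^{-t/\eta} - \int_0^t \frac{n-1}{\eta\,\rho(s)} \, e^{-(t-s)/\eta} \, ds.
\end{equation*}
The boundary term is bounded by $|\nu_0|\,e^{-t_1/\eta}$ and hence vanishes uniformly on $[t_1,T]$. The kernel $\eta^{-1}e^{-(t-s)/\eta}$ is an approximate identity peaked at $s=t$. Writing $1/\rho(s) = 1/\rho^o(t) + \bigl(1/\rho^o(s) - 1/\rho^o(t)\bigr) + \bigl(1/\rho(s) - 1/\rho^o(s)\bigr)$ splits the convolution into three contributions: the constant piece integrates to $(n-1)(1-e^{-t/\eta})/\rho^o(t) \to (n-1)/\rho^o(t)$; the Lipschitz correction is $O(\eta)$, via the change of variable $u=(t-s)/\eta$ and $\int_0^\infty u\,e^{-u}\,du < \infty$, since $1/\rho^o$ is Lipschitz on $[0,T]$; the last piece is $O(\eta)$ by Step~2 and the fact that the kernel has unit $L^1$-mass at worst. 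Combining these establishes uniform convergence of $\nu$ to $-(n-1)/\rho^o$ on $[t_1, T]$.

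The main obstacle is that near $t=0$ one cannot hope for $\nu(t) \to -(n-1)/\rho^o(t)$, since $\nu_0$ is allowed to lie anywhere in $[-(n-1)/\rho_0, 0]$; this fast initial layer of thickness $O(\eta)$ is exactly why the convergence in \eqref{eq:nu-rho0'} is stated only on $[t_1, T]$, and it is absorbed cleanly by the exponential weight $e^{-t_1/\eta}$ in the Duhamel representation above. The slow variable $\rho$, by contrast, starts from the common value $\rho_0$ and therefore admits convergence up to $t=0$ in \eqref{eq:rho-rho0}.
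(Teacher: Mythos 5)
Your proof is correct, and it takes a route that differs in its bookkeeping from the paper's. The paper introduces the combined variable $\chi=\chi_1+\eta\chi_2$ with $\chi_1=\rho-\rho^o$ and $\chi_2=\nu+(n-1)/\rho^o$, uses the invariant-region information from Lemma \ref{lem:prop-rho} to know that $\chi_1,\chi_2\geq0$ and $\rho\geq\rho^o\geq\rho^o(T)$, derives the one-sided differential inequality $\chi'\leq C\chi+\eta C$, applies Gr\"onwall to get $\chi(t)\leq C(\chi(0)+\eta)$ with $\chi(0)=\eta|\nu_0+(n-1)/\rho_0|$, and then feeds this back into $\eta\chi_2'\leq-\chi_2+C(\chi(0)+\eta)$ and integrates with the factor $e^{t/\eta}$ to obtain the estimate for $\nu$ away from $t=0$. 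You instead decouple the two estimates: for \eqref{eq:rho-rho0} you eliminate $\nu$ via $\nu=-(n-1)/\rho-\eta\nu'$, integrate so that the stiff term becomes the boundary contribution $\eta(\nu(t)-\nu_0)$, control it by the a priori bound $\nu^2\leq M_T$ from \eqref{eq:rho'-limitato}, and run Gr\"onwall directly on $|\rho-\rho^o|$; for \eqref{eq:nu-rho0'} you write the exact Duhamel formula for $\nu$ and treat $\eta^{-1}e^{-(t-s)/\eta}$ as an approximate identity, splitting $1/\rho(s)$ into the frozen value $1/\rho^o(t)$, a Lipschitz correction of size $O(\eta)$ (legitimate since $(\rho^o)'$ is bounded on $[0,T]$ for $T<\T$), and the $O(\eta)$ error from your first step. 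Both arguments are Gr\"onwall plus an $e^{t/\eta}$ integrating factor at heart and both yield the rate $O(\eta)$ for $\rho-\rho^o$ together with convergence of $\nu$ only on $[t_1,T]$ because of the initial layer; what your version buys is that it never uses the sign conditions $\chi_1,\chi_2\geq0$ coming from the particular initial data $(\rho_0,\nu_0)\in\Gamma$, only the uniform bounds of Lemma \ref{lem:prop-rho}, so it is slightly more robust (e.g.\ it would tolerate initial velocities outside the invariant region once uniform bounds on $\nu$ and a positive lower bound on $\rho$ are available), while the paper's combined-variable trick is more compact once those signs are in hand.
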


\begin{proof}
For $t\in[0,T]$, define 
\begin{equation*}
	\chi(t):=\chi_1(t)+\eta\chi_2(t):=\rho(t)-\rho^o(t)+\eta\left(\nu(t)+\frac{n-1}{\rho^o(t)}\right).
\end{equation*}
Recall that, from the assumptions on the initial data, we have $\chi_1(t)\geq0$, $\chi_2(t)\geq0$ and $\rho(t)\geq\rho^o(t)\geq\rho^o(T)$ for any $t\in[0,T]$.
By differentiating, we get
\begin{equation*}
	\chi_1'=\chi_2, \qquad \quad 
	\eta\chi_2'=-\chi_2+\frac{n-1}{\rho^o}-\frac{n-1}{\chi_1+\rho^o}+\eta\frac{(n-1)^2}{(\rho^o)^3}.
\end{equation*}
Using that
\begin{equation*}
	\eta\chi_2'=-\chi_2+\frac{(n-1)\chi_1}{\rho\,\rho^o}+\eta\frac{(n-1)^2}{(\rho^o)^3}\leq-\chi_2+\frac{n-1}{\rho^o(T)}\chi_1+\eta\frac{(n-1)^2}{\rho^o(T)^3},
\end{equation*}
for any $t\in(0,T)$, we deduce that there exists $C>0$ (depending on $T$ but not on $\e$ and $\tau$) such that
\begin{equation*}
	\chi'_1\leq\chi_2, \qquad 
	\eta \chi_2'\leq -\chi_2+C\chi_1+\eta C,
\end{equation*}
for any $t\in(0,T)$.
Summing, one has
\begin{equation*}
	\chi'_1+\eta\chi'_2\leq C\chi_1+\eta C,
\end{equation*}
and so
\begin{equation*}
	\chi'(t)\leq C\chi(t)+\eta C,  \qquad \quad \forall\, t\in[0,T].
\end{equation*}
Integrating and applying Gr\"onwall's Lemma, we obtain that there exists $C>0$ such that
\begin{equation*}
	\chi(t)\leq C(\chi(0)+\eta), \qquad \quad \forall\, t\in[0,T].
\end{equation*}
In particular, it follows that 
\begin{equation*}
	|\rho(t)-\rho^o(t)|\leq C(\chi(0)+\eta), \qquad \quad \forall\, t\in[0,T],
\end{equation*}
and by using that $\chi(0)=\eta|\nu_0+(n-1)/\rho_0|$ we end up with \eqref{eq:rho-rho0}.
Furthermore, we also have that
\begin{equation*}
	\eta\chi'_2\leq-\chi_2+C(\chi(0)+\eta).
\end{equation*}
Hence,
\begin{equation*}
	\eta\left(e^{t/\eta}\chi_2(t)\right)'\leq C(\chi(0)+\eta)e^{t/\eta},
\end{equation*}
and so
\begin{align*}
	\chi_2(t)&\leq C(\chi(0)+\eta)\bigl(1-e^{-t/\eta}\bigr)+\chi_2(0)e^{-t/\eta} \\
	& \leq C(\chi(0)+\eta)+\chi(0)\frac{e^{-t/\eta}}{\eta},
\end{align*}
for $t\in[0,T]$.
Therefore, for any fixed $t_1\in(0,T)$, we obtain \eqref{eq:nu-rho0'}.
\end{proof}

The previous result ensures that the behavior of the solutions to \eqref{eq:rho} is described
by the equation $\rho'=-(n-1)/\rho$ as $\e$ (or $\tau$) is small.
Recall that the latter equation describes the classic motion by mean curvature for radial solutions in the classic case.
Observe also that in Lemma \ref{lem:epstau-rho} we consider initial data as in \eqref{eq:rho}, and so
the properties \eqref{eq:rho-rho0}-\eqref{eq:nu-rho0'} hold for any initial data $(\rho_0,\nu_0)\in(0,1)\times[0,-(n-1)/\rho_0]$.
Let us stress that the scope of this section is to study the evolution of the solutions to \eqref{eq:damped-radial}-\eqref{eq:boundary},
when the initial datum has a transition from $+1$ to $-1$, then in Lemmas \ref{lem:prop-rho}-\ref{lem:epstau-rho} we use particular assumptions
on the initial data $\rho_0$, $\nu_0$.
However, in the case $(\rho_0,\nu_0)\notin\Gamma$, where $\Gamma$ is the invariant region, the solution enters to $\Gamma$ in a very short time and we have the same behavior
described in Lemma \ref{lem:prop-rho} (see an example in Figure \ref{fig:rho'=50}).
\begin{figure}[htbp]
\centering
\includegraphics[width=5cm,height=4cm]{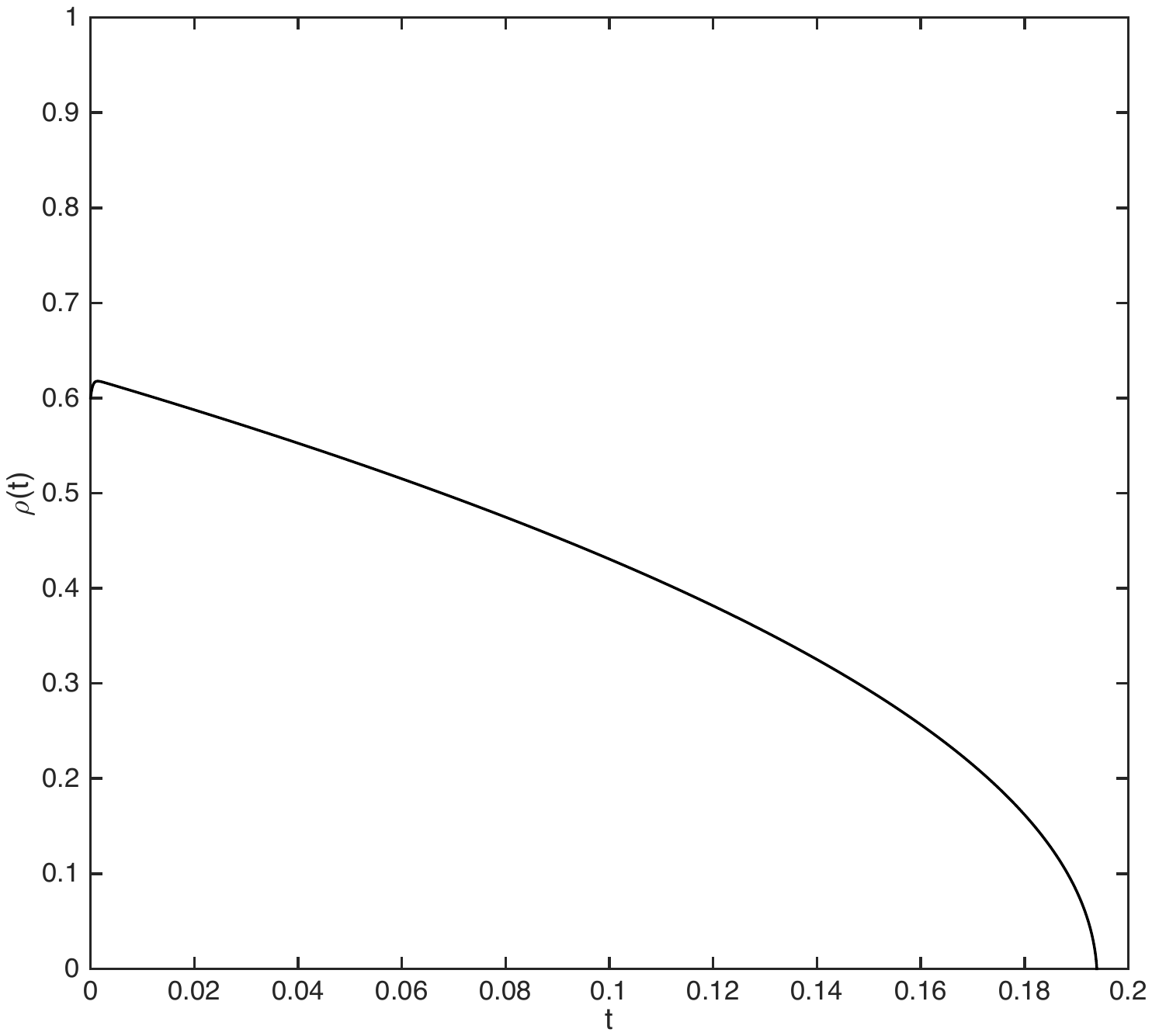}\qquad\quad
\includegraphics[width=5cm,height=4cm]{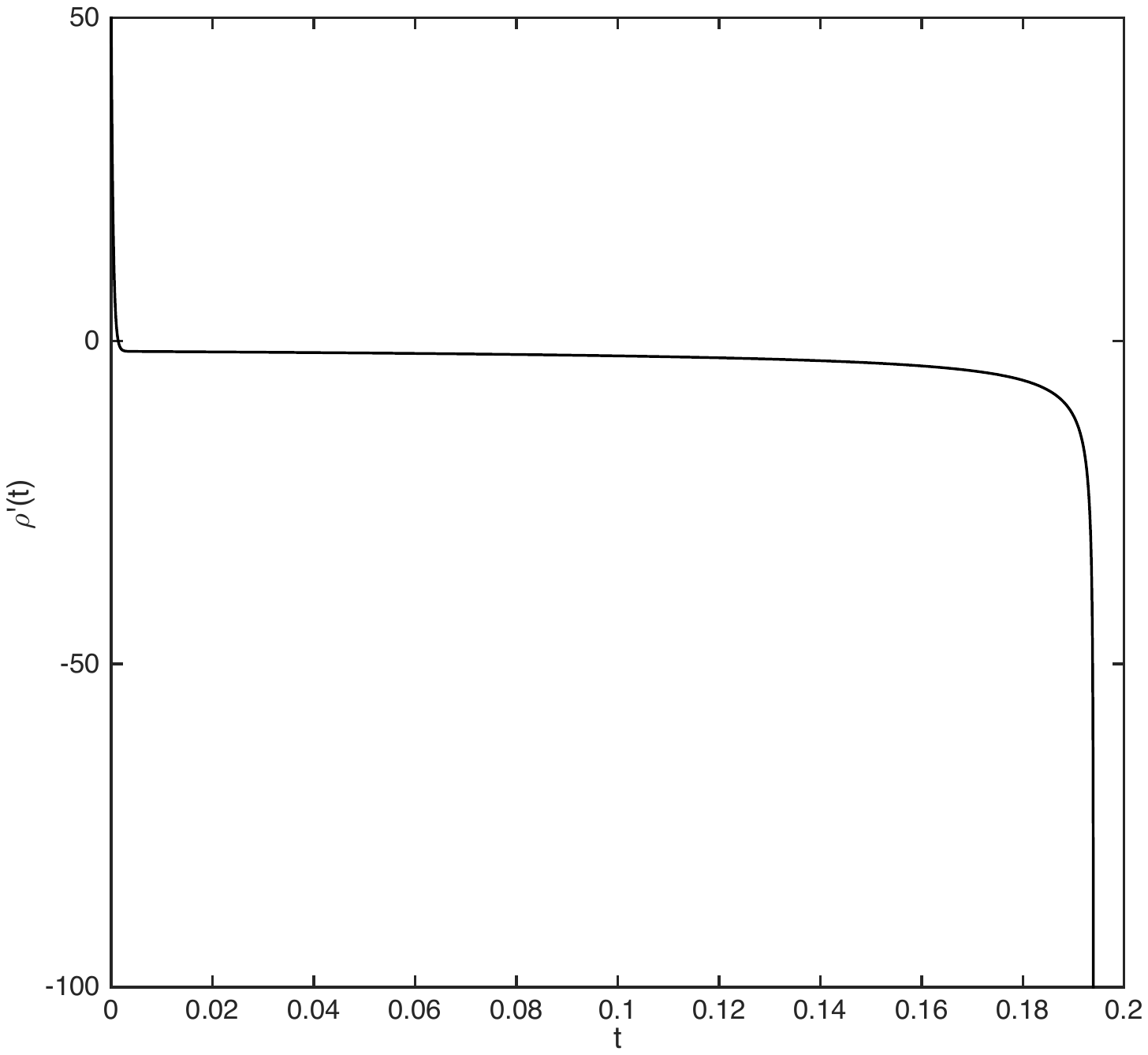}\\ \vspace{0.4cm}
\includegraphics[width=5.5cm,height=4.5cm]{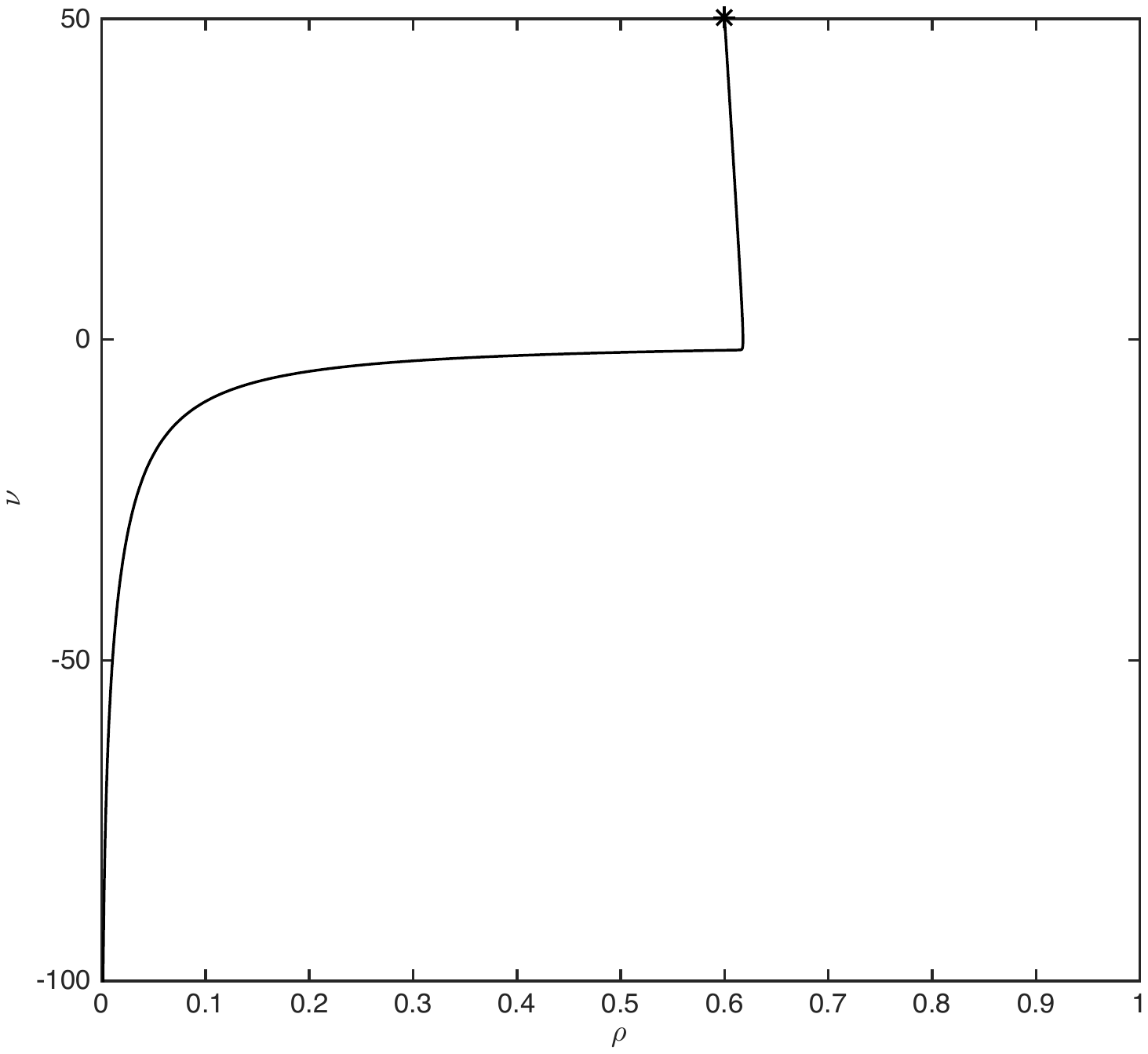}
\caption{Solution to \eqref{eq:rho} with $\tau=1$, $\e=0.02$ and initial data $\rho_0=0.6$, $\nu_0=50$.}
\label{fig:rho'=50}
\end{figure}

\subsection{Change of variables}
Following \cite{Bron-Kohn2}, we shall work in a moving coordinate system with respect to which $u^\e$ should be asymptotically stationary.
Then, we introduce the new variable $R=r-\rho(t)$ and define
\begin{equation}\label{eq:change-var}
	v^\e(R,t)=u^\e(R+\rho(t),t), \qquad \mbox{ or, equivalently } \qquad u^\e(r,t)=v^\e(r-\rho(t),t).
\end{equation}
The function $v^\e$ is defined for $R\in[-\rho(t),1-\rho(t)]$, with $t\in(0,T^\e_m)$ and 
$T^\e_m$, given in Lemma \ref{lem:prop-rho}, is the time when $\rho$ vanishes.
By differentiating \eqref{eq:change-var}, we infer
\begin{align*}
	u^\e_t(r,t)&=-\rho'(t)v^\e_R(R,t)+v^\e_t(R,t), \\
	u^\e_{tt}(r,t)&=-\rho''(t)v^\e_R(R,t)+\rho'(t)^2v^\e_{RR}(R,t)-2\rho'(t)v^\e_{tR}(R,t)+v^\e_{tt}(R,t),\\
	u_r(r,t)&=v_R(R,t).
\end{align*}
Fix $T\in(0,\T)$ where $\T$ is the constant defined in \eqref{eq:Tmax}, it follows that if $u^\e$ satisfies \eqref{eq:damped-radial}, then
\begin{align*}
	\e^2\tau v^\e_{tt}-2\e^2\tau\rho'v^\e_{tR}+v_t=&\left(1-\e^2\tau(\rho')^2\right)v^\e_{RR}\\
	&\quad+\left(\e^2\tau\rho''+\rho'+\frac{n-1}{R+\rho}\right)v^\e_R-\e^{-2}F'(v^\e),
\end{align*}
with $(R,t)\in(-\rho(t),1-\rho(t))\times(0,T)$.
Since $\rho$ satisfies \eqref{eq:rho}, we obtain
\begin{equation*}
	\e^2\tau\rho''+\rho'+\frac{n-1}{R+\rho}=-\frac{n-1}{\rho}+\frac{n-1}{R+\rho}=-\frac{(n-1)R}{\rho(R+\rho)},
\end{equation*}
and so,
\begin{equation*}
	\e^2\tau v^\e_{tt}-2\e^2\tau\rho'v^\e_{tR}+v^\e_t=\left(1-\e^2\tau(\rho')^2\right)v^\e_{RR}-\frac{(n-1)R}{\rho(R+\rho)}v^\e_R-\e^{-2}F'(v^\e).
\end{equation*}
We want the coefficient of $v^\e_{RR}$ to be strictly positive; using \eqref{eq:rho'-limitato}, we have
\begin{equation*}
	1-\e^2\tau\rho'(t)^2\geq1-\e^2\tau\frac{(n-1)^2}{\rho_0^2-2(n-1)T}=:1-\e^2\tau M_T, \qquad \quad \forall \, t\in(0,T).
\end{equation*}
Therefore we choose $\e_0=\e_0(T)$ sufficiently small so that $\e_0^2\tau M_T\leq1-\alpha$, where $\alpha\in(0,1)$.
Hence, we can state that for any $\e\in(0,\e_0)$
\begin{equation}\label{eq:alfa}
	1-\e^2\tau\rho'(t)^2\geq\alpha, \qquad \quad \forall \, t\in[0,T].
\end{equation}
Now, let us rewrite the equation for $v$ as
\begin{equation}\label{eq:v}
	\e^2\tau v^\e_{tt}-2\e^2\tau\rho'v^\e_{tR}+v^\e_t=\left(1-\e^2\tau(\rho')^2\right)\frac{(\phi^\e v^\e_R)_R}{\phi^\e}-\e^{-2}F'(v^\e),
\end{equation}
where the integrating factor $\phi^\e$ satisfies
\begin{equation}\label{eq:phi_R}
	\phi^\e_R=-\frac{(n-1)R}{\rho(R+\rho)\left(1-\e^2\tau(\rho')^2\right)}\phi^\e.
\end{equation}
Equation \eqref{eq:v} is complemented with the boundary conditions
\begin{equation}\label{eq:v-boundary}
	v^\e(1-\rho(t),t)=1, \qquad v^\e_R(-\rho(t),t)=0, \qquad \quad \forall \, t\in(0,T).
\end{equation}
The next step is to study the problem \eqref{eq:v}-\eqref{eq:v-boundary} in the domain $[-\rho(t),1-\rho(t)]\times[0,T]$,
where $T$ is a fixed constant strictly less than $\T$ and for $\e$ sufficiently small so that \eqref{eq:alfa} holds.
To start with, in the next subsection we collect some properties of the integrating factor $\phi^\e$.
From now on, we drop the superscript $\e$ and we use the notation $\phi=\phi^\e$.

\subsection{Properties of $\phi$}
Let us explicitly compute the solution of \eqref{eq:phi_R} satisfying $\phi(0)=1$.
Integrating \eqref{eq:phi_R}, we get
\begin{align*}
	\ln(\phi(R,t))&=-\int_0^R\frac{(n-1)s}{\rho(t)(s+\rho(t))\left(1-\e^2\tau\rho'(t)^2\right)}ds \\
	&=-\frac{(n-1)R}{\rho(t)\left(1-\e^2\tau\rho'(t)^2\right)}+\frac{n-1}{1-\e^2\tau\rho'(t)^2}\ln\left(\frac{R+\rho(t)}{\rho(t)}\right).
\end{align*}
Hence, we choose the integrating factor
\begin{equation}\label{eq:phi}
	\phi(R,t)=\exp\left(-\frac{(n-1)R}{\rho(t)\left(1-\e^2\tau\rho'(t)^2\right)}\right)\left(1+\frac{R}{\rho(t)}\right)^\frac{n-1}{1-\e^2\tau\rho'(t)^2}.
\end{equation}
From the smallness of $\e$ and \eqref{eq:alfa}, it follows that $\phi$ is well defined and positive in the domain
$[-\rho(t),1-\rho(t)]\times[0,T]$. 
Precisely, $\phi$ is zero if and only if $R=-\rho$ and we have
\begin{equation}\label{eq:prop-phi}
	0\leq\phi(R,t)\leq1, \qquad \phi(-\rho(t),t)=0, \quad \phi(0,t)=1.
\end{equation}
Furthermore, we will make use later of the following properties of $\phi$.
\begin{lem}
Let $\phi$ be the function defined in \eqref{eq:phi} in the domain $[-\rho(t),1-\rho(t)]\times[0,T]$ where $T<\T$,
$\rho$ satisfies \eqref{eq:rho} and $\e$ is sufficiently small that \eqref{eq:alfa} holds.
Then,
\begin{equation}\label{eq:prop-phi2}
	\phi(-R,t)\leq\phi(R,t), \qquad \forall \, (R,t)\in(0,\rho(t))\times(0,T),
\end{equation}
and, there exist positive constants $c,K_T$ such that
\begin{equation}\label{eq:prop-phi3}
	\phi(R,t)\geq1-K_TR^2, \qquad \forall \, (R,t)\in(-c,c)\times(0,T).
\end{equation}
\end{lem}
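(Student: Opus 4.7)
The plan is to verify both properties by direct manipulation of the explicit formula
\begin{equation*}
\phi(R,t) = \exp\left(-\frac{\beta(t) R}{\rho(t)}\right)\left(1+\frac{R}{\rho(t)}\right)^{\beta(t)}, \qquad \beta(t):=\frac{n-1}{1-\e^2\tau\rho'(t)^2},
\end{equation*}
exploiting only that $\beta(t)>0$ and that $\rho(t) \geq \rho^o(T) > 0$ uniformly on $[0,T]$ (the latter by Lemma \ref{lem:prop-rho} and the choice $T<\T$).

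For \eqref{eq:prop-phi2}, I would reduce the claim to a one-variable inequality by forming the ratio
\begin{equation*}
\frac{\phi(R,t)}{\phi(-R,t)} = \exp\left(-\frac{2\beta(t) R}{\rho(t)}\right)\left(\frac{\rho(t)+R}{\rho(t)-R}\right)^{\beta(t)} = \exp\left(\beta(t)\left[\ln\frac{1+x}{1-x}-2x\right]\right),
\end{equation*}
with $x:=R/\rho(t)\in(0,1)$. Since $\ln\frac{1+x}{1-x}=2\sum_{k\geq 0}\frac{x^{2k+1}}{2k+1}\geq 2x$ for $x\in(0,1)$ and $\beta(t)>0$, the ratio is $\geq 1$, which is \eqref{eq:prop-phi2}.

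For \eqref{eq:prop-phi3}, I would perform a Taylor expansion of $\phi$ in $R$ around $R=0$. Differentiating the explicit expression,
\begin{equation*}
\phi_R(R,t) = -\phi(R,t)\,\frac{\beta(t)\,R}{\rho(t)(\rho(t)+R)},
\end{equation*}
so $\phi(0,t)=1$ and $\phi_R(0,t)=0$. A further differentiation gives
\begin{equation*}
\phi_{RR}(R,t) = \phi(R,t)\left[\left(\frac{\beta(t) R}{\rho(t)(\rho(t)+R)}\right)^{2}-\frac{\beta(t)}{(\rho(t)+R)^{2}}\right].
\end{equation*}
Using Lemma \ref{lem:prop-rho} and \eqref{eq:alfa}, both $\beta(t)$ and $1/\rho(t)$ are bounded uniformly in $(R,t)\in(-c_0,c_0)\times[0,T]$ for any $c_0<\rho^o(T)/2$, so $|\phi_{RR}|\leq M_T^\ast$ on this set for some constant depending only on $T$. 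Then the integral form of Taylor's remainder yields
\begin{equation*}
\phi(R,t) = 1+\int_0^R \phi_{RR}(s,t)(R-s)\,ds \geq 1 - M_T^\ast\,\frac{R^2}{2},
\end{equation*}
which is \eqref{eq:prop-phi3} with $K_T:=M_T^\ast/2$ and any $c\in(0,c_0)$.

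Neither step is really hard — both are elementary. The only subtlety worth flagging is the uniformity in $t$: one must ensure that $\rho(t)$ does not approach zero on $[0,T]$ (which is exactly why the statement is restricted to $T<\T$) and that $\beta(t)$ stays bounded (which is guaranteed by the choice of $\e_0$ making \eqref{eq:alfa} hold). Once those bounds are in place, the constants $c$ and $K_T$ depend only on $T$, as required.
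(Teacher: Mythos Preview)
Your proof is correct and follows essentially the same approach as the paper: for \eqref{eq:prop-phi2} the paper uses the equivalent inequality $e^{x}(1-x)\leq e^{-x}(1+x)$ raised to the power $k=\beta(t)$, which is just another way of writing your series bound $\ln\frac{1+x}{1-x}\geq 2x$; for \eqref{eq:prop-phi3} the paper invokes the specific inequality $\exp(-kx)(1+x)^k\geq 1-k^2x^2$ near $x=0$ (noting $x=0$ is a local minimum of the difference), which is the same second-order Taylor idea you carry out explicitly, with the only difference that the paper extracts the explicit constant $K_T=(n-1)^2/\bigl(\alpha^2(\rho_0^2-2(n-1)T)\bigr)$.
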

\begin{proof}
In order to prove \eqref{eq:prop-phi2}, we use the inequality
\begin{equation*}
	e^x(1-x)\leq e^{-x}(1+x), \qquad  \mbox{ for any }\,  x\in(0,1).
\end{equation*} 
Let us introduce $k:=(n-1)/(1-\e^2\tau(\rho')^2)$ and observe that $k\in[n-1,(n-1)/\alpha]$ for \eqref{eq:alfa}.
By elevating the above inequality to the power $k$ and for $x=R/\rho$ we obtain \eqref{eq:prop-phi2}.

Similarly, the property \eqref{eq:prop-phi3} follows from the inequality 
\begin{equation*}
	\exp(-kx)(1+x)^k\geq1-k^2x^2, 
\end{equation*} 
which holds for $x$ in a neighborhood of $0$ and for all $k\geq1$, because $x=0$ is a minimal point of the function $\exp(-kx)(1+x)^k-1+k^2x^2$.
Therefore, we deduce that there exists a constant $c>0$ such that for all $(R,t)\in(-c,c)\times(0,T)$, one has
\begin{equation*}
	\phi(R,t)\geq1-\frac{(n-1)^2}{\rho(t)^2(1-\e^2\tau\rho'(t)^2)^2}R^2. 
\end{equation*}
Using Lemma \ref{lem:prop-rho} and \eqref{eq:alfa}, we conclude that for $|R|$ sufficiently small and $t\in(0,T)$
\begin{equation*}
	\phi(R,t)\geq1-\frac{(n-1)^2}{\rho(T)^2\alpha^2}R^2,
\end{equation*}
that is \eqref{eq:prop-phi3} with $K_T:=\frac{(n-1)^2}{\alpha^2(\rho_0^2-2(n-1)T)}$.
\end{proof}
Now, let us consider the derivatives of $\phi$.
Regarding the derivative $\phi_R$, from the equation \eqref{eq:phi_R} and \eqref{eq:phi} it follows that 
$\phi_R$ is bounded and satisfies for any $t\in(0,T)$
\begin{equation}\label{eq:prop-phi_R}
	\begin{aligned}
		\phi_R(R,t)>0 \; & \mbox{ in} \, (-\rho(t),0), \qquad \phi_R(R,t)<0 \; \mbox{ in} \, (0,1-\rho(t)], \\
		& \phi_R(-\rho(t),t)=\phi_R(0,t)=0.
	\end{aligned}		
\end{equation}
For the time derivative $\phi_t$ we have the following result.
\begin{lem}
Let $\phi$ be the function defined in \eqref{eq:phi} with $\rho$ satisfying \eqref{eq:rho} and $\e$ sufficiently small that \eqref{eq:alfa} holds.
Then, for $(R,t)\in[-\rho(t),1-\rho(t)]\times[0,T]$ we have
\begin{equation}\label{eq:phi_t-phi_R}
	\phi_t(R,t)\leq -\frac{\rho'(t)}{\rho(t)}R\,\phi_R(R,t)\leq0.
\end{equation}
\end{lem}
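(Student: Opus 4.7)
I would prove this by a direct computation from the explicit formula \eqref{eq:phi} for $\phi$, reducing everything to a single elementary inequality plus the ODE satisfied by $\rho$.

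First, I would observe that the second inequality $-\frac{\rho'}{\rho}R\phi_R \leq 0$ is essentially free: by Lemma \ref{lem:prop-rho} we have $\rho'(t) \leq 0$ and $\rho(t) > 0$ on $[0,T]$, so $-\rho'/\rho \geq 0$; meanwhile \eqref{eq:prop-phi_R} tells us that $\phi_R$ and $R$ have opposite signs, so $R\phi_R \leq 0$. Multiplying these gives the claim.

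For the main inequality, I would introduce the shorthand $s := R/\rho(t)$ and $k := (n-1)/\delta$ where $\delta := 1 - \varepsilon^2\tau(\rho')^2$, so that \eqref{eq:phi} becomes
\begin{equation*}
\ln\phi(R,t) = k\bigl[-s + \ln(1+s)\bigr].
\end{equation*}
Differentiating in $t$ and using $s_t = -s\,\rho'/\rho$ gives
\begin{equation*}
\frac{\phi_t}{\phi} = k'\bigl[\ln(1+s)-s\bigr] + k\,s_t\Bigl[\tfrac{1}{1+s}-1\Bigr]
 = k'\bigl[\ln(1+s)-s\bigr] + \frac{k\,\rho'\,s^2}{\rho(1+s)}.
\end{equation*}
On the other hand, from \eqref{eq:phi_R} one has $\phi_R/\phi = -ks/[\rho(1+s)]$, so
\begin{equation*}
-\frac{\rho'}{\rho}R\,\frac{\phi_R}{\phi} = \frac{k\,\rho'\,s^2}{\rho(1+s)}.
\end{equation*}
Subtracting, the claim $\phi_t \leq -\frac{\rho'}{\rho}R\,\phi_R$ reduces to
\begin{equation*}
k'(t)\bigl[\ln(1+s)-s\bigr] \leq 0.
\end{equation*}

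The elementary inequality $\ln(1+s) \leq s$ for $s > -1$ (with equality only at $s=0$) shows that the bracket is $\leq 0$ on the relevant range $s \in (-1,1/\rho(t)-1]$. Hence everything reduces to proving $k'(t) \geq 0$, equivalently $\delta'(t) \leq 0$. Here comes the only non-routine step: since $\delta' = -2\varepsilon^2\tau\rho'\rho''$, I would use the ODE \eqref{eq:rho} to write $\varepsilon^2\tau\rho'' = -\rho' - (n-1)/\rho$, giving
\begin{equation*}
\varepsilon^2\tau\,\rho'\rho'' = -\rho'\Bigl(\rho' + \tfrac{n-1}{\rho}\Bigr).
\end{equation*}
By Lemma \ref{lem:prop-rho}, the invariant region property \eqref{eq:inv-reg} asserts exactly $\rho' \leq 0$ and $\rho' + (n-1)/\rho \geq 0$, so the product of these two factors is $\leq 0$, whence $\varepsilon^2\tau\rho'\rho'' \geq 0$ and therefore $\delta' \leq 0$, i.e.\ $k'(t) \geq 0$.

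The main (and really the only) obstacle is the algebraic bookkeeping in computing $\phi_t$ cleanly; the conceptual content is the sign of $k'$, which is precisely where the ODE \eqref{eq:rho} and the invariant region $\Gamma$ from Lemma \ref{lem:prop-rho} are used. Everything else is either an explicit substitution from \eqref{eq:phi} or the elementary concavity inequality for $\ln(1+s)$.
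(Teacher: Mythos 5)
Your proof is correct and is essentially the paper's own argument in cleaner clothing: differentiating $\ln\phi=k[\ln(1+s)-s]$ instead of $\phi$ itself just repackages the paper's direct computation, since your term $k'[\ln(1+s)-s]$ is exactly (up to a positive factor) the paper's $-2\e^2\tau\rho''\rho(R+\rho)I(R,t)$ with $I(R,t)=R-\rho\ln\left(1+\frac{R}{\rho}\right)=-\rho[\ln(1+s)-s]$. The two key ingredients — the elementary inequality $\ln(1+x)\leq x$ and the sign of $\rho'\left(\rho\rho'+n-1\right)$ obtained from \eqref{eq:rho} together with the invariant-region property \eqref{eq:inv-reg} — are precisely those used in the paper, so no further comparison is needed.
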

\begin{proof}
Let us compute the time derivative of $\phi$ (we use the notation $\rho=\rho(t)$):
\begin{align*}
	\phi_t(R,t)=\phi(&R,t)\Biggl\{\frac{(n-1)R\rho'\left[\left(1-\e^2\tau(\rho')^2\right)-2\e^2\tau\rho\rho''\right]}{\rho^2\left(1-\e^2\tau(\rho')^2\right)^2}\\
	&+\frac{2\e^2\tau\rho'\rho''(n-1)}{\left(1-\e^2\tau(\rho')^2\right)^2}\ln\left(1+\frac{R}{\rho}\right)
	-\frac{R(n-1)\rho\rho'}{\left(1-\e^2\tau(\rho')^2\right)(R+\rho)\rho^2}\Biggr\}.
\end{align*}
Simplifying we get
\begin{align*}
	\phi_t(R,t)=\frac{\phi(R,t)(n-1)\rho'}{\rho^2(R+\rho)\left(1-\e^2\tau(\rho')^2\right)^2}\Bigl\{R^2&\left(1-\e^2\tau(\rho')^2\right)\\
	\qquad&-2\e^2\tau\rho''\rho(R+\rho)I(R,t)\Bigr\},
\end{align*}
where $I(R,t):=R-\rho\ln\left(1+\frac{R}{\rho}\right)$.
In order to determine the sign of $\phi_t$, we observe that, since $\rho$ satisfies \eqref{eq:rho},
$\e^2\tau\rho''\rho=-\rho'\rho-(n-1)$ and as a consequence, if $I(R,t)\geq0$ then \eqref{eq:inv-reg} and \eqref{eq:alfa}
imply $\phi_t(R,t)\leq0$ in the domain $[-\rho(t),1-\rho(t)]\times[0,T]$.
However,
\begin{equation*}
	I(R,t)=R\left(1-\frac{\rho}{R}\ln\left(1+\frac{R}{\rho}\right)\right)\geq0,
\end{equation*}
because the function $1-\ln(1+x)/x$ is positive for $x\geq0$ and negative for $x<0$.
Then, $\phi_t$ si negative in the domain.
Precisely, in $[-\rho(t),1-\rho(t)]\times[0,T]$ we have  
\begin{equation*}
	\phi_t(R,t)\leq\frac{\phi(R,t)(n-1)\rho'}{\rho^2(R+\rho)\left(1-\e^2\tau(\rho')^2\right)^2}\Bigl\{R^2\left(1-\e^2\tau(\rho')^2\right)\Bigr\},
\end{equation*}
and using \eqref{eq:phi_R} we get \eqref{eq:phi_t-phi_R}.
\end{proof}

\subsection{The energy functional}
Now, we introduce the functional
\begin{equation}\label{eq:energy-v}
	E_\phi[v,v_t](t):=\int_{-\rho(t)}^{1-\rho(t)}\left[\frac{\e^3\tau}{2}(v_t)^2+\e\left(1-\e^2\tau(\rho')^2\right)\frac{v_R^2}{2}+\e^{-1}F(v)\right]\phi\,dR,
\end{equation}
for $t\in[0,T]$ and $T<\T$, where $\rho$ satisfies \eqref{eq:rho}, $\e$ is so small that \eqref{eq:alfa} holds, 
$F$ satisfies \eqref{eq:ass-F-radial} and $\phi$ is defined in \eqref{eq:phi}.
In particular, the smallness of $\e$ (see \eqref{eq:alfa}) and the positivity of the integrating factor $\phi$ guarantee that $E_\phi[v,v_t](t)\geq0$, for all $t\in[0,T]$.
The goal of this subsection is to study the evolution of $E_\phi$ along the solutions $(v^\e,v^\e_t)$ to the problem \eqref{eq:v}-\eqref{eq:v-boundary}.
To simplify notation, we write $(v,v_t)$ instead of $(v^\e,v^\e_t)$. 

As we will see, the main problem is the presence of the term
\begin{equation*}
	\int_{\rho(t)}^{1-\rho(t)}v_t(R,t)^2\phi_R(R,t)\,dR,
\end{equation*}
because $\phi_R\sim \phi/(R+\rho)\sim(R+\rho)^{\frac{n-1}{1-\e^2\tau\rho'(t)^2}-1}$ for $R$ close to $-\rho$.
For any (fixed) $t>0$, since we are studying the behavior of the solutions when $\e\to0$, we need a control on
\begin{equation*}
	\int_{\rho(t)}^{1-\rho(t)}v_t(R,t)^2(R+\rho)^{n-2}\,dR,
\end{equation*}
which has a problem at $R=-\rho$.
As we will see, a possible way to overcome such problem is to use the higher order estimates introduced in Section \ref{sec:higher} 
and impose that the parameter $\tau$ depends on $\e$ in a way such that \eqref{eq:tau-ugly} holds.

\begin{prop}
Fix $T\in(0,\T)$, where $\T$ is defined in \eqref{eq:Tmax} and let $(v,v_t)$ be a sufficiently regular solution to the BVP \eqref{eq:v}-\eqref{eq:v-boundary} 
where $\tau$ satisfies \eqref{eq:tau-ugly}, $\e$ is so small that \eqref{eq:alfa} holds, $\rho$ satisfies \eqref{eq:rho}, and $\phi$ is defined in \eqref{eq:phi}.
Then, the functional \eqref{eq:energy-v} satisfies for any $t\in[0,T]$ and for any $\e$ sufficiently small,
\begin{equation}\label{eq:E'(t)}
	 \frac{d}{dt}E_\phi[v,v_t](t)\leq -\beta\e \int_{-\rho(t)}^{1-\rho(t)}v_t(R,t)^2\phi(R,t)\,dR+J^\e(t), 
\end{equation}
for some $\beta>0$ (independent on $\e,\tau,T$), where for $\mu$ as in \eqref{eq:tau-ugly}
\begin{align}
	J^\e(t):=& -\e^3\tau\rho'(t)\int_{-\rho(t)}^{-\rho(t)+\e^{2+\mu}}v_t(R,t)^2\phi_R(R,t)\,dR\nonumber\\
	 & -\frac{\rho'(t)}{\rho(t)}\int_{-\rho(t)}^{1-\rho(t)}\left[\frac{\e^3\tau}{2}(v_t)^2+\e\alpha\frac{v_R^2}{2}\right]R\,\phi_R\,dR.
	 \label{eq:J}
\end{align}
\end{prop}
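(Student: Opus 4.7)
The plan is to differentiate $E_\phi$ directly and eliminate $v_{tt}$ via equation \eqref{eq:v}, carefully tracking the boundary contributions from the moving domain. Writing $G:=\tfrac{\e^3\tau}{2}v_t^2+\tfrac{\e a}{2}v_R^2+\e^{-1}F(v)$ and $a(t):=1-\e^2\tau\rho'(t)^2$, Leibniz' rule gives
\begin{equation*}
\frac{d}{dt}E_\phi = \int_{-\rho}^{1-\rho} G_t\,\phi\,dR + \int_{-\rho}^{1-\rho} G\,\phi_t\,dR - \rho'(t)\,G(1-\rho,t)\,\phi(1-\rho,t),
\end{equation*}
since the left boundary contribution drops out thanks to $\phi(-\rho,t)=0$ from \eqref{eq:prop-phi}. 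First, I would multiply \eqref{eq:v} by $\e v_t\phi$; inside $\int G_t\phi\,dR$ the $F'(v)v_t\phi$ contribution then cancels, and the remaining mixed-derivative pieces collapse into total $R$-derivatives $\e a(\phi v_R v_t)_R$ and $\e^3\tau\rho'(v_t^2)_R\phi$. Integrating these by parts produces boundary terms only at $R=1-\rho$ (again by $\phi(-\rho,t)=0$); the condition \eqref{eq:v-boundary} forces $v_t=\rho' v_R$ and $F(v)=0$ at that endpoint, and the identity $a+\e^2\tau(\rho')^2=1$ lets all right-endpoint contributions together with the Leibniz boundary telescope to the single, non-positive quantity $\tfrac{\e}{2}\rho'(t)v_R(1-\rho,t)^2\phi(1-\rho,t)$, using $\rho'\le 0$ from \eqref{eq:inv-reg}.

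After this bookkeeping I am left with
\begin{equation*}
\frac{d}{dt}E_\phi \le -\e\!\int_{-\rho}^{1-\rho}\!v_t^2\phi\,dR \;-\; \e^3\tau\rho'\!\int_{-\rho}^{1-\rho}\!v_t^2\phi_R\,dR \;+\; \frac{\e a'(t)}{2}\!\int_{-\rho}^{1-\rho}\!v_R^2\phi\,dR \;+\; \int_{-\rho}^{1-\rho}\!G\,\phi_t\,dR.
\end{equation*}
Next, I would discard the $a'$-term: differentiating \eqref{eq:rho} gives $\e^2\tau\rho''=-\rho'-(n-1)/\rho$, which is $\le 0$ by \eqref{eq:inv-reg}, so $\rho'\rho''\ge 0$ and hence $a'=-2\e^2\tau\rho'\rho''\le 0$. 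For the last term, \eqref{eq:phi_t-phi_R} together with $R\phi_R\le 0$ (from \eqref{eq:prop-phi_R}), $a\ge\alpha$ (from \eqref{eq:alfa}) and $F\ge 0$ yields the pointwise bound $G\ge\tfrac{\e^3\tau}{2}v_t^2+\tfrac{\e\alpha}{2}v_R^2$ and hence
\begin{equation*}
\int G\,\phi_t\,dR \;\le\; -\frac{\rho'(t)}{\rho(t)}\int_{-\rho}^{1-\rho}\!\!\left[\frac{\e^3\tau}{2}v_t^2+\frac{\e\alpha}{2}v_R^2\right]\!R\,\phi_R\,dR,
\end{equation*}
which is exactly the second summand of $J^\e(t)$.

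The hard part is controlling the remaining term $-\e^3\tau\rho'\int v_t^2\phi_R\,dR$: by \eqref{eq:phi_R}, $\phi_R$ is of order $\phi/(R+\rho)$ and blows up as $R\to -\rho^+$. Following the structure of $J^\e(t)$, I would split the integral at $R=-\rho+\e^{2+\mu}$; the inner piece is, by definition, the first summand of $J^\e(t)$. On the outer set where $R+\rho\ge\e^{2+\mu}$, the combination of \eqref{eq:phi_R}, \eqref{eq:alfa} and Lemma \ref{lem:prop-rho} gives $|\phi_R|\le C_T\,\e^{-(2+\mu)}\phi$, whence
\begin{equation*}
\left|\e^3\tau\rho'\!\int_{-\rho+\e^{2+\mu}}^{1-\rho}\!v_t^2\phi_R\,dR\right|\;\le\; C_T\,\tau\e^{-\mu}|\rho'(t)|\cdot\e\!\int_{-\rho}^{1-\rho}\!v_t^2\phi\,dR.
\end{equation*}
Assumption \eqref{eq:tau-ugly} gives $\tau\e^{-\mu}=o(1)$ and \eqref{eq:rho'-limitato} bounds $|\rho'|$ uniformly on $[0,T]$, so for every fixed $\beta\in(0,1)$ the prefactor is $\le 1-\beta$ once $\e$ is sufficiently small. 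This outer contribution is thus absorbed by the dissipation $-\e\int v_t^2\phi\,dR$, and \eqref{eq:E'(t)} follows with the desired $\beta>0$.
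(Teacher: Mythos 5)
Your argument is correct and follows essentially the same route as the paper: differentiate $E_\phi$ (accounting for the moving domain and the time-dependent coefficient $1-\e^2\tau(\rho')^2$), substitute the equation, integrate by parts so the boundary terms reduce via $v_t=\rho' v_R$ and $\phi(-\rho,t)=0$ to the non-positive quantity $\tfrac{\e}{2}\rho' v_R^2\phi$ at $R=1-\rho$, control the $\phi_t$ term with \eqref{eq:phi_t-phi_R}, and split the $\phi_R$ integral at $-\rho+\e^{2+\mu}$, absorbing the outer piece into the dissipation via $|\phi_R|\le C\e^{-(2+\mu)}\phi$ and \eqref{eq:tau-ugly}. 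The only slip is cosmetic: the identity $\e^2\tau\rho''=-\rho'-(n-1)/\rho$ is just equation \eqref{eq:rho} itself, not obtained by differentiating it.
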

\begin{proof}
Let us differentiate the functional $E_\phi$ defined in \eqref{eq:energy-v} with respect to $t$:
\begin{equation*}
	\frac{d}{dt}E_\phi[v,v_t]=I_1(t)+I_2(t)+I_3(t)+I_4(t),
\end{equation*}
where
\begin{align*}
	I_1(t)&:=\int_{-\rho(t)}^{1-\rho(t)}\left[\e^3\tau v_tv_{tt}+\e\left(1-\e^2\tau(\rho')^2\right)v_Rv_{Rt}+\e^{-1}F'(v)v_t\right]\phi\,dR,\\
	I_2(t)&:=-\int_{-\rho(t)}^{1-\rho(t)}\e^3\tau\rho'\rho''v_R^2\phi\,dR,\\
	I_3(t)&:=\int_{-\rho(t)}^{1-\rho(t)}\left[\frac{\e^3\tau}{2}(v_t)^2+\e\left(1-\e^2\tau(\rho')^2\right)\frac{v_R^2}{2}+\e^{-1}F(v)\right]\phi_t\,dR,\\
	I_4(t)&:=\Biggl[\left(\frac{\e^3\tau}{2}(v_t)^2+\e\left(1-\e^2\tau(\rho')^2\right)\frac{v_R^2}{2}+\e^{-1}F(v)\right)\phi\Biggr]^{1-\rho(t)}_{-\rho(t)}(-\rho'(t)).
\end{align*}
Integrating by parts, we get
\begin{equation*}
	\int_{-\rho(t)}^{1-\rho(t)}v_Rv_{Rt}\phi\,dR=\biggl[v_Rv_t\phi\biggr]_{-\rho(t)}^{1-\rho(t)}-\int_{-\rho(t)}^{1-\rho(t)}v_t{(\phi v_R)}_R\,dR,
\end{equation*}
By substituting, we infer
\begin{equation*}
	I_1=\e\int_{-\rho}^{1-\rho}\left[\e^2\tau v_{tt}\phi-\left(1-\e^2\tau(\rho')^2\right){(\phi v_R)}_R+\e^{-2}F'(v)\phi\right]v_t\,dR+I_5,
\end{equation*}
where
\begin{equation*}
	I_5(t):=\e\left(1-\e^2\tau(\rho')^2\right)\biggl[v_Rv_t\phi\biggr]_{-\rho(t)}^{1-\rho(t)}.
\end{equation*}
From the equation for $v$ \eqref{eq:v}, it follows that 
$$
	\e^2\tau v_{tt}\phi-\left(1-\e^2\tau(\rho')^2\right){(\phi v_R)}_R+\e^{-2}F'(v)\phi=-\phi v_t+2\e^2\tau\rho'v_{tR}\phi,
$$ 
and so
\begin{equation}\label{eq:I1}
	I_1(t)=-\e\int_{-\rho(t)}^{1-\rho(t)}\phi v_t^2\,dR +I_5(t)
	+2\e^3\tau\rho'\int_{-\rho(t)}^{1-\rho(t)}v_{tR}v_t\phi\,dR.
\end{equation}
Regarding $I_2$, since $\rho$ satisfies equation \eqref{eq:rho}, $\e^2\tau\rho'\rho''=-\frac{\rho'}{\rho}(\rho\rho'+n-1)$,
and from \eqref{eq:inv-reg} and the positivity of $\phi$ it follows that
\begin{equation}\label{eq:I2}
	I_2(t)\leq0, \qquad \quad \forall\, t\in[0,T].
\end{equation}
Moreover, using \eqref{eq:alfa}, \eqref{eq:phi_t-phi_R} and the positivity of $F$ we get
\begin{equation}\label{eq:I3}
	I_3(t)\leq-\frac{\rho'(t)}{\rho(t)}\int_{-\rho(t)}^{1-\rho(t)}\left[\frac{\e^3\tau}{2}(v_t)^2+\e\alpha\frac{v_R^2}{2}\right]R\,\phi_R\,dR,
\end{equation}
for all $t\in[0,T]$.
It remains to study $I_4$; first of all, notice that differentiating the first boundary condition of \eqref{eq:v-boundary} we infer
\begin{equation}\label{eq:vt-vR}
	v_t(1-\rho(t),t)=v_R(1-\rho(t),t)\rho'(t), \qquad \quad \forall\, t\in[0,T].
\end{equation}
Thus, using \eqref{eq:v-boundary}, \eqref{eq:vt-vR} and the fact the $\phi(-\rho(t),t)=0$ (see \eqref{eq:prop-phi}), we obtain
\begin{equation}\label{eq:I4}
	I_4(t)=-\frac\e2\rho'(t)v_R(1-\rho(t),t)^2\phi(1-\rho(t),t).
\end{equation}
At the same way, we deduce that
\begin{equation}\label{eq:I5}
	I_5(t):=\e\left(1-\e^2\tau\rho'(t)^2\right)\rho'(t)v_R(1-\rho(t),t)^2\phi(1-\rho(t),t).
\end{equation}
Combining \eqref{eq:I1}, \eqref{eq:I2}, \eqref{eq:I3}, \eqref{eq:I4} and \eqref{eq:I5} we end up with	
\begin{align*}
	\frac{d}{dt}E_\phi[v,v_t]\leq &-\e\int_{-\rho(t)}^{1-\rho(t)} v_t^2\phi\,dR+2\e^3\tau\rho'(t)\int_{-\rho(t)}^{1-\rho(t)}v_{tR}v_t\phi\,dR\\
	&-\frac{\rho'(t)}{\rho(t)}\int_{-\rho(t)}^{1-\rho(t)}\left[\frac{\e^3\tau}{2}(v_t)^2+\e\alpha\frac{v_R^2}{2}\right]R\,\phi_R\,dR\\
	&+\e\left(\frac12-\e^2\tau\rho'(t)^2\right)\rho'(t)v_R(1-\rho(t),t)^2\phi(1-\rho(t),t).
\end{align*}
Using that $2v_t(R,t)v_{tR}(R,t)=\frac{d}{dR}v_t(R,t)^2$ and integrating by parts we obtain
\begin{align*}
	\frac{d}{dt}E_\phi[v,v_t]\leq &-\e\int_{-\rho(t)}^{1-\rho(t)}v_t(R,t)^2\phi(R,t)\,dR\\
	&+\e^3\tau\rho'(t)v_t(1-\rho(t),t)^2\phi(1-\rho(t),t)\\
	&-\e^3\tau\rho'(t)\int_{-\rho(t)}^{1-\rho(t)}v_t(R,t)^2\phi_R(R,t)\,dR\\
	&-\frac{\rho'(t)}{\rho(t)}\int_{-\rho(t)}^{1-\rho(t)}\left[\frac{\e^3\tau}{2}(v_t)^2+\e\alpha\frac{v_R^2}{2}\right]R\,\phi_R\,dR\\
	&+\e\left(\frac12-\e^2\tau\rho'(t)^2\right)\rho'(t)v_R(1-\rho(t),t)^2\phi(1-\rho(t),t).
\end{align*}
Using \eqref{eq:vt-vR}, we conclude that
\begin{align*}
	\frac{d}{dt}E_\phi[v,v_t]\leq &-\e\int_{-\rho(t)}^{1-\rho(t)}v_t(R,t)^2\phi(R,t)\,dR+I_6(t)\\
	&+\frac\e2\rho'(t)v_R(1-\rho(t),t)^2\phi(1-\rho(t),t),
\end{align*}
where
\begin{align*}
	I_6(t):=& -\e^3\tau\rho'(t)\int_{-\rho(t)}^{1-\rho(t)}v_t(R,t)^2\phi_R(R,t)\,dR\\
	&-\frac{\rho'(t)}{\rho(t)}\int_{-\rho(t)}^{1-\rho(t)}\left[\frac{\e^3\tau}{2}(v_t)^2+\e\alpha\frac{v_R^2}{2}\right]R\,\phi_R\,dR.
\end{align*}
Since $\rho'$ is negative and $\phi$ is positive, we infer
\begin{equation}\label{eq:E'(t)-I6}
	\frac{d}{dt}E_\phi[v,v_t](t)\leq -\e\int_{-\rho(t)}^{1-\rho(t)}v_t(R,t)^2\phi(R,t)\,dR+I_6(t),
\end{equation}
for all $t\in[0,T]$.
The second integral in $I_6(t)$ is indeed non positive, but we keep it for later use. 
Concerning the first one, for which we are not able to determine its sign, we split it as follows:
\begin{align*}
	 \int_{-\rho(t)}^{1-\rho(t)}v_t(R,t)^2\phi_R(R,t)\,dR & =  \int_{-\rho(t)}^{-\rho(t) +\e^{2+\mu}}v_t(R,t)^2\phi_R(R,t)\,dR \\
	&\quad +  \int_{-\rho(t) +\e^{2+\mu}}^{1-\rho(t)}v_t(R,t)^2\phi_R(R,t)\,dR.
\end{align*} 
where $\mu$ is the same of \eqref{eq:tau-ugly}.
From \eqref{eq:phi_R} there exists a constant $C=C(\alpha,T)>0$ (independent of $\e$) such that 
\begin{equation*}
	|\phi_R(R,t)|<C\e^{-(2+\mu)} \phi(R,t),
\end{equation*}
for any $(R,t)\in [-\rho(t) + \e^{2+\mu}, 1 - \rho(t)]\times [0,T]$ and therefore \eqref{eq:E'(t)-I6} becomes
\begin{align*}
	\frac{d}{dt}E_\phi[v,v_t](t)\leq &-\e\int_{-\rho(t)}^{-\rho(t)+\e^{2+\mu}}v_t(R,t)^2\phi(R,t)\,dR\\
	&\qquad-(1+C\tau\e^{-\mu}\rho'(t))\e\int_{-\rho(t)+\e^{2+\mu}}^{1-\rho(t)}v_t(R,t)^2\phi(R,t)\,dR+J^\e(t),
\end{align*}
for all $t\in[0,T]$, where $J^\e$ is defined in \eqref{eq:J}.
Therefore, we obtain inequality \eqref{eq:E'(t)} choosing $\e$ so small that $1+C\tau\e^{-\mu}\rho'(t)\geq\beta$ for some $\beta\in[0,1]$ and for any $t\in[0,T]$.
Indeed, using that $\rho'(t)\geq-(n-1)/\rho(t)$ for any $t\in[0,T]$, we deduce
\begin{equation*}
	1+C\tau\e^{-\mu}\rho'(t)\geq 1-\frac{C(n-1)}{\rho(t)}\tau\e^{-\mu}\geq 1-\frac{C(n-1)}{\rho(T)}\tau\e^{-\mu},
\end{equation*} 
for any $t\in[0,T]$ and the proof is complete thanks to \eqref{eq:tau-ugly}.
\end{proof}
By integrating \eqref{eq:E'(t)} we obtain
\begin{equation}\label{eq:energy-variation}
	E_\phi[v,v_t](0)-E_\phi[v,v_t](\bar T)\geq\beta\e\int_0^{\bar T}\int_{-\rho(t)}^{1-\rho(t)}v_t(R,t)^2\phi(R,t)\,dtdR-h(\e),
\end{equation}
for all $\bar T\in[0,T]$, where 
\begin{equation*}
	h(\e):=\int_0^{T}J^\e(t)\,dt.
\end{equation*}
If $h$ is negative, in view of \eqref{eq:energy-variation}, we have that $E_\phi[v,v_t](0)\geq E_\phi[v,v_t](\bar T)$ for any $\bar T\in[0,T]$.
In order to use \eqref{eq:energy-variation} in the case of $h$ strictly positive we need the following result.
\begin{prop}
Assume that $T<\T$, $J^\e$ is defined in \eqref{eq:J}, $\rho$ satisfies \eqref{eq:rho} and $v$ is given by the change of variables \eqref{eq:change-var},
where $u$ is a solution to \eqref{eq:damped-radial} satisfying the same assumptions of Proposition \ref{prop:higher}.
Then,
\begin{equation}\label{eq:ass-h}
	\lim_{\e\to0}h(\e)=0.
\end{equation}
\end{prop}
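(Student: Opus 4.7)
The plan is to split $h(\e)=\int_0^T J^\e(t)\,dt$ according to the two lines in \eqref{eq:J}. Call the first line $h_1(\e)$ and the second $h_2(\e)$. The second contribution is manifestly nonpositive: by \eqref{eq:prop-phi_R} the product $R\phi_R\le0$ throughout $[-\rho(t),1-\rho(t)]$, while $-\rho'(t)/\rho(t)>0$ on $[0,T]$ by Lemma \ref{lem:prop-rho}, so the integrand of $h_2$ is $\le0$. In particular $h(\e)\le\int_0^T J_1^\e(t)\,dt$, and the principal task is to produce an upper bound for this positive piece that vanishes as $\e\to0^+$, together with a matching control of $|h_2(\e)|$.

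The crucial observation for $J_1^\e$ is that, although $\phi_R$ is large in general, on the inner layer $R\in(-\rho(t),-\rho(t)+\e^{2+\mu})$ one can extract from \eqref{eq:phi}--\eqref{eq:phi_R} the uniform estimate $|\phi_R(R,t)|\le C_T(R+\rho)^{k-1}$ with exponent $k=(n-1)/(1-\e^2\tau(\rho')^2)\ge n-1$, guaranteed by \eqref{eq:alfa}. Passing to the radial variable $r=R+\rho\in(0,\e^{2+\mu})$, using $v_t=\rho'u_r+u_t$ with $|\rho'|$ bounded on $[0,T]$ by \eqref{eq:rho'-limitato}, one obtains
\begin{equation*}
\int_0^T J_1^\e(t)\,dt\le C_T\,\e^3\tau\int_0^T\!\!\int_0^{\e^{2+\mu}}\bigl(u_r^2+u_t^2\bigr)\,r^{n-2}\,dr\,dt.
\end{equation*}
The higher-order bound \eqref{eq:grad-ut} from Proposition \ref{prop:higher}, namely $\int_0^T\|u_t^\e\|_{H^1(\Omega)}^2\,dt\le C\e^{-5}(1+\tau^{-1})$, together with Hardy's inequality in radial form (legitimate for $u_t^\e$ because differentiating \eqref{eq:boundary} in $t$ gives $u_t^\e(1,t)=0$) converts the weighted control into an estimate on the unweighted $\int_0^1 u_t^2\,dr$; an analogous argument, using the energy bound on $u_r^\e$ coming from \eqref{eq:energy-u} and a Sobolev embedding, handles the $u_r$-term. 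The shrinking interval of length $\e^{2+\mu}$ supplies an additional small factor that, combined with the $\e^3\tau$ prefactor and the scaling assumption $\tau=o(\e^\mu)$ from \eqref{eq:tau-ugly}, drives the right-hand side to zero.

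To obtain the matching lower bound on $h$ one shows $|h_2(\e)|\to0$. The key point is that $|R\phi_R|\le C_T$ holds \emph{uniformly} on $[-\rho(t),1-\rho(t)]\times[0,T]$: writing $s=1+R/\rho$ and using \eqref{eq:phi}, the apparent $R^2/(R+\rho)$ singularity near $R=-\rho$ is absorbed by the $(R+\rho)^k$ factor of $\phi$ as soon as $k\ge1$. With this uniform estimate, $|J_2^\e(t)|$ reduces to the unweighted integrals $\int_0^1(\e^3\tau v_t^2+\e v_R^2)\,dr$, and the same combination of Proposition \ref{prop:higher}, Hardy's inequality, and \eqref{eq:tau-ugly} gives $\int_0^T|J_2^\e|\,dt\to0$. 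Summing both contributions proves \eqref{eq:ass-h}.

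The main obstacle is precisely the passage from the weighted estimates naturally provided by the energy functional and by Proposition \ref{prop:higher} (which carry the radial weight $r^{n-1}$) to the unweighted integrals that the two pieces of $J^\e$ force upon us. For $n=3$ the Hardy inequality on $H_0^1(B^3)$ closes this gap cleanly; for $n=2$, where Hardy degenerates logarithmically, one must rely on the smallness of the interval of integration $(0,\e^{2+\mu})$ combined with a careful one-dimensional radial Sobolev argument, and it is exactly this balance between the dissipative factor $\e^3\tau$, the higher-order growth $\e^{-5}\tau^{-1}$, and the interval length $\e^{2+\mu}$ that dictates the quantitative form $\tau=o(\e^\mu)$ of the assumption \eqref{eq:tau-ugly}.
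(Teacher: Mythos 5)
Your overall skeleton matches the paper's mechanism (isolate the positive inner-layer term of \eqref{eq:J}, exploit the bound $|\phi_R|\leq C(R+\rho)^{k-1}$ there, pass to the radial variable on the shrinking interval $(0,\e^{2+\mu})$, and close with Proposition \ref{prop:higher} and the scaling \eqref{eq:tau-ugly}), but there is a genuine gap at the step where you expand $v_t=u_t+\rho' v_R$ after having discarded the whole second line of \eqref{eq:J}. You are then forced to estimate $\e^3\tau\int_0^T\!\!\int_0^{\e^{2+\mu}}u_r^2\,r^{n-2}\,dr\,dt$, and your proposed control of it (``energy bound on $u_r$ plus a Sobolev embedding'') does not close: the only available gradient bound is the energy estimate, which controls $\int_0^1 u_r^2\,r^{n-1}\,dr\lesssim\e^{-1}$ with the natural weight $r^{n-1}$, while the integral you need carries the extra singular factor $r^{-1}$ near the origin; no $\e$-uniform $H^2$ bound on $u^\e$ nor any higher integrability of $u_r^\e$ is available (Proposition \ref{prop:higher} gives $H^1$ control of $u^\e_t$, not of $u^\e_r$), so neither Hardy nor H\"older can be applied to this term. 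The paper avoids the issue precisely by \emph{not} throwing away the nonpositive second line of \eqref{eq:J}: part of it is retained and combined with the inner-layer $v_R^2$ contribution, and since on $(-\rho(t),-\rho(t)+\e^{2+\mu})$ one has $-\alpha R\geq 4\e^2\tau(\rho')^2$ for $\e$ small, the entire $v_R^2$ part becomes nonpositive and is simply dropped; only the $u_t^2$ term survives, and that is handled by H\"older with exponent $q'$ close to $1$ (resp.\ $q'=3/2$ for $n=3$), the embedding $H^1\subset L^{2q}$, \eqref{eq:grad-ut} and \eqref{eq:tau-ugly}. Your Hardy-inequality route for the $u_t^2$ piece is a legitimate variant when $n=3$, but for $n=2$ it is only sketched, whereas the paper's H\"older/Sobolev argument covers both dimensions uniformly.

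A second, lesser problem is your claimed matching bound $|h_2(\e)|\to0$. Even granting the uniform estimate $|R\,\phi_R|\leq C_T$, the resulting bound on $|J_2^\e|$ involves the \emph{unweighted} integrals of $\e v_R^2$ and $\e^3\tau v_t^2$: near $R=-\rho(t)$ these are not controlled by the $\phi$-weighted (equivalently $r^{n-1}$-weighted) energy at all, and even away from that point $\int\e v_R^2\phi\,dR$ is only $O(1)$ --- the transition layer alone contributes order one --- so it cannot tend to zero; Proposition \ref{prop:higher} and Hardy do not help, since they concern $u_t$, not $v_R$. Note that the paper itself establishes only an upper bound on $h$ tending to zero, which is all that is used afterwards (through $y=\max\{z,h\}$ in \eqref{eq:energy-Tbar} and in Proposition \ref{prop:main-v}); so the decisive defect of your proposal is the $u_r^2$ inner-layer term discussed above, not the absence of a two-sided limit.
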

\begin{proof}
We have that
\begin{align*}
	J^\e(t)& \leq J^\e_1(t)+J^\e_2(t)\\
	& :=-\frac{\e^3\tau\rho'(t)}{2\rho(t)}\int_{-\rho(t)}^{-\rho(t)+\e^{2+\mu}}(R+2\rho(t))v_t(R,t)^2\phi_R(R,t)\,dR\\
	& \;\quad-\frac{\e\alpha\rho'(t)}{2\rho(t)}\int_{-\rho(t)}^{1-\rho(t)}v_R(R,t)^2 R\,\phi_R(R,t)\,dR.
\end{align*}
Observe that $J^\e_2(t)\leq0$ for all $t\in[0,T]$ because $R\phi_R(R,t)<0$ for $(R,t)\in(-\rho(t),1-\rho(t))\times[0,T]$, 
whereas $\phi_R(R,t)>0$ for $(R,t)\in(-\rho(t),0)\times[0,T]$ and then $J^\e_1(t)\geq0$ for all $t\in[0,T]$.
Let us estimate the term $J_1^\e$. 
To do this, we recall that 
\begin{align*}
	v_t(R,t)^2& =\left(u_t(R+\rho(t),t)+\rho'(t)v_R(R,t)\right)^2\\
	& \leq2\left(u_t(R+\rho(t),t)^2+\rho'(t)^2v_R(R,t)^2\right),
\end{align*}
for all $(R,t)\in(-\rho(t),1-\rho(t))\times(0,T))$.
Hence,
\begin{align*}
	J^\e_1(t)& \leq -\frac{\e^3\tau\rho'(t)}{\rho(t)}\int_{-\rho(t)}^{-\rho(t)+\e^{2+\mu}}v_t(R,t)^2\phi_R(R,t)\,dR\\
	&\leq -\frac{2\e^3\tau\rho'(t)}{\rho(t)}\int_{-\rho(t)}^{-\rho(t)+\e^{2+\mu}}u_t(R+\rho(t),t)^2\phi_R(R,t)\,dR\\
	&\quad -\frac{2\e^3\tau\rho'(t)^3}{\rho(t)}\int_{-\rho(t)}^{-\rho(t)+\e^{2+\mu}}v_R(R,t)^2\phi_R(R,t)\,dR,
\end{align*}
and, as a trivial consequence
\begin{align*}
	J^\e(t)& \leq -\frac{2\e^3\tau\rho'(t)}{\rho(t)}\int_{-\rho(t)}^{-\rho(t)+\e^{2+\mu}}u_t(R+\rho(t),t)^2\phi_R(R,t)\,dR\\
	&\quad +\frac{\e\rho'(t)}{2\rho(t)}\int_{-\rho(t)}^{-\rho(t)+\e^{2+\mu}}\left(-4\e^2\tau\rho'(t)^2-\alpha R\right)v_R(R,t)^2\phi_R(R,t)\,dR\\
	& \quad-\frac{\e\alpha\rho'(t)}{2\rho(t)}\int_{-\rho(t)+\e^{2+\mu}}^{1-\rho(t)}v_R(R,t)^2 R\,\phi_R(R,t)\,dR.
\end{align*}
Choosing $\e$ so small that
\begin{equation*}
	\alpha(\rho(T)-\e^2)\geq4\e^2\tau\rho'(T)^2,
\end{equation*}
we end up with
\begin{equation*}
	J^\e(t)\leq -\frac{2\e^3\tau\rho'(t)}{\rho(t)}\int_{-\rho(t)}^{-\rho(t)+\e^{2+\mu}}u_t(R+\rho(t),t)^2\phi_R(R,t)\,dR=:J^\e_3(t).
\end{equation*}
Now, let us estimate the integral $J^\e_3$.
Using that
\begin{align*}
	\left|\phi_R(R,t)\right| & \leq \frac{(n-1)|R|}{\rho(T)\alpha(R+\rho)}|\phi(R,t)|\leq \frac{C}{R+\rho}(R+\rho)^\frac{n-1}{1-\e^2\tau\rho'(t)^2}\\
	& \leq C(R+\rho)^{n-2}, 
\end{align*}
where $C>0$ depends on $T$, we deduce that there exists $C>0$ (depending on $T$) such that 
\begin{align*}
	J^\e_3(t)&\leq C\e^3\tau\int_{-\rho(t)}^{-\rho(t)+\e^{2+\mu}}u_t(R+\rho(t),t)^2(R+\rho)^{n-2}\,dR\\
	&=C\e^3\tau\int_{0}^{\e^{2+\mu}}u_t(r,t)^2r^{n-2}\,dr.
\end{align*}
Coming back to cartesian coordinates, we obtain
\begin{equation*}
	J^\e_3(t)\leq C\e^3\tau\int_{B(0,\e^{2+\mu})}\frac{u_t(x,t)^2}{|x|}\,dx,
\end{equation*}
where $B(0,\e^{2+\mu})$ is the ball of center $0$ and of radius $\e^{2+\mu}$.
From H\"older's inequality, it follows that
\begin{equation*}
	\int_{B(0,\e^{2+\mu})}\frac{u_t(x,t)^2}{|x|}\,dx\leq\left(\int_{B(0,\e^{2+\mu})}u_t(x,t)^{2q}\,dx\right)^{\frac1q}\left(\int_{B(0,\e^{2+\mu})}\frac{1}{|x|^{q'}}\,dx\right)^\frac{1}{q'},
\end{equation*}
where $\frac1q+\frac1{q'}=1$ and $q'= \frac{q}{q-1} <n$.
For such $q'$, the second integral is bounded as follows:
\begin{equation*}
	\left ( \int_{B(0,\e^{2+\mu})}\frac{1}{|x|^{q'}}\,dx\right)^\frac{1}{q'} \leq C \e^{(2+\mu)\left( \frac{n}{q'} - 1\right )},
\end{equation*}
and therefore it is convenient to choose $q$ big such that $q'$ is as close as possible to $1$. 
In particular, in view of Sobolev inequalities, we choose $q = \frac{2^*}{2} = 3$ for $n=3$, namely, $q' = \frac{3}{2}$, and  $q$ large   so that 
$q'=1+\mu/5$ for $n=2$. 
With these choices, we can say that there exists a constant $C>0$ (depending on $T$) such that
\begin{equation*}
	J^\e_3(t)\leq C\e^\sigma \tau\|u_t(\cdot,t)\|_{L^{2q}(B(0,\e^{2+\mu}))}^2\leq C\e^\sigma\tau\| u_t(\cdot,t)\|_{H^{1}(\Omega)}^2, 
\end{equation*}
for all $t\in(0,T)$, where
\begin{equation*}
	\sigma = \begin{cases} 
			5+\mu\left(\frac{1-\mu}{5+\mu}\right), & \hbox{if}\ n=2, \\
			5+\mu, & \hbox{if}\ n=3.
			\end{cases}
\end{equation*}
Using Proposition \ref{prop:higher} and \eqref{eq:grad-ut}, we conclude that
\begin{equation*}
	h(\e) = \int_0^T J^\e(t)\, dt  \leq C\e^{-5+\sigma},
\end{equation*}
where $\sigma$ is defined above, and then the proof is complete.
\end{proof}
\begin{rem}\label{rem:ass-tau}
If the term $I_6$ of inequality \eqref{eq:E'(t)-I6} is negative, then the functional $E_\phi$ decreases in time
along the solutions $(v^\e,v^\e_t)$ to the problem \eqref{eq:v}-\eqref{eq:v-boundary}, and we need no assumptions on the parameter $\tau>0$.
Also if $I_6$ is positive with $I_6=o(1)$ as $\e\to0$, we must not impose a smallness condition on the parameter $\tau$.
Since we are not able to establish a priori the sign of $I_6$ and we do not have an estimate of $u_t$ near $x=0$, 
we introduce the function $J^\e$ and use the estimate \eqref{eq:grad-ut}.
In this way, we need to impose the condition \eqref{eq:tau-ugly} on the parameter $\tau$ to obtain \eqref{eq:ass-h}.
However, we believe that such condition on the smallness of $\tau$ is indeed technical, as confirmed by numerical evidence; 
for instance in the numerical examples of Section \ref{sec:intro}, Figures \ref{fig:eps=0.02}-\ref{fig:eps=0.01_t=250}, we choose $\tau=1$.
\end{rem}

\subsection{Dynamics of $v^\e$}
Denote by $v^\e$ the solution of \eqref{eq:v} with boundary conditions \eqref{eq:v-boundary} and initial data
\begin{equation}\label{eq:v0}
	v^\e(R,0)=v_0^\e(R), \qquad v_t^\e(R,0)=v_1^\e(R), \quad R\in[-\rho_0,1-\rho_0].
\end{equation}
Similarly to \eqref{eq:u0-ass}, we assume that $v_0^\e$ converges in $L^1$ to $\bar v(R)$ as $\e\to0$, where
\begin{equation}\label{eq:ass-v0}
	 \bar v(R):=\begin{cases} -1, \qquad R<0, \\ +1, \qquad R>0,\end{cases}
\end{equation}
and that  the energy $E_\phi$ at the time $t=0$ satisfies
\begin{align}\label{eq:ass-energy}
	E_\phi[v^\e_0,v_1^\e]&:=\int_{-\rho(0)}^{1-\rho(0)}\left[\frac{\e^3\tau}{2}(v_1^\e)^2+
			\e\left(1-\e^2\tau\rho'(0)^2\right)\frac{\left(v_0^\e\right)_r^2}{2}+\e^{-1}F(v^\e_0)\right]\phi\,dR\notag\\
	&\leq c_0+z(\e),
\end{align}
where $c_0, z$ are the same of \eqref{eq:energy-u}.
Using \eqref{eq:energy-variation}, assumption \eqref{eq:ass-energy} and the positivity of $\phi$, we obtain the following estimate for the energy
\begin{equation}\label{eq:energy-Tbar}
	E_\phi[v^\e,v^\e_t](\bar T)\leq c_0+y(\e),
\end{equation}
for all $\bar T\in[0,T]$, where $y=\max\{z,h\}$. 
In particular, since $z=o(1)$ as $\e\to0^+$, in view of \eqref{eq:ass-h}, we deduce that 
the energy is uniformly bounded for any $\bar T\in[0,T]$.

The function $v^\e$ is defined in the region $[-\rho(t),1-\rho(t)]\times[0,T^\e_m)$; 
however, in the following we shall work in a region 
\begin{equation}\label{eq:region-a}
	[-a, a]\times(0,T)\subset(-\rho(t),1-\rho(t))\times(0,T^\e_m).
\end{equation}
Since $\rho$ is a decreasing function of $t$ and $\rho(t) \geq \rho^o(t)$ for any $t\in [0,\T] \subset [0,T^\e_m)$, 
for any $T\in(0,\T)$ it is possible to choice $a>0$ (depending on $T$) such that \eqref{eq:region-a} is satisfied; 
i.e.\ $a< \min\{\rho^o(T), 1-\rho_0\}$.
The function $\phi$ vanishes only at $R=-\rho$ and so, with this choice of $a$ and $T$, we can say that 
\begin{equation}\label{eq:phi_m}
	\phi(R,t)\geq\phi_m>0 \quad \mbox{ for } (R,t)\in(-a, a)\times(0,T),
\end{equation}
where $\phi_m$ is a constant depending only on $T$ (to be explicitly obtained).
For $0\leq t_1<t_2\leq T$, define
\begin{equation}\label{eq:d^eps}
	d^\e(t_1,t_2):=\int_{-a}^a\big|\Psi(v^\e(R,t_1))-\Psi(v^\e(R,t_2))\big|\,dR,
\end{equation}
where the function $\Psi$ is defined in \eqref{eq:Psi}.
\begin{prop}
Let $(v^\e,v^\e_t)$ be a sufficiently regular solution to the IBVP \eqref{eq:v}-\eqref{eq:v-boundary}-\eqref{eq:v0},
where $\tau$ satisfies \eqref{eq:tau-ugly}, $\e$ is so small that \eqref{eq:alfa} holds, $\rho$ satisfies \eqref{eq:rho}, 
$\phi$ is defined in \eqref{eq:phi}, and the initial data satisfy \eqref{eq:ass-energy}.
Moreover, fix $T\in(0,\T)$ and $a>0$ such that \eqref{eq:region-a} holds. 
Then, there exists a constant $C>0$ (depending on $T$, but not on $\e$) such that 
\begin{equation}\label{eq:d-stima1}
	d^\e(t_1,t_2)\leq C(t_2-t_1)^{1/2}\bigl(E_\phi[v^\e,v^\e_t](t_1)-E_\phi[v^\e,v^\e_t](t_2)+h(\e)\bigr)^{1/2},
\end{equation}
whenever $0\leq t_1<t_2\leq T$.
\end{prop}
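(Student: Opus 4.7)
The plan is to control $d^\e(t_1,t_2)$ by rewriting the integrand using the identity $\partial_t\Psi(v^\e)=\sqrt{2F(v^\e)}\,v^\e_t$ and then applying Cauchy--Schwarz in space-time, so as to split the estimate into a potential factor (bounded by the energy) and a kinetic factor (bounded by the energy dissipation from \eqref{eq:E'(t)}).

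First I would observe that $\Psi'=\sqrt{2F}$ implies, for each fixed $R\in[-a,a]$,
\begin{equation*}
\bigl|\Psi(v^\e(R,t_2))-\Psi(v^\e(R,t_1))\bigr|\leq\int_{t_1}^{t_2}\sqrt{2F(v^\e(R,t))}\,|v^\e_t(R,t)|\,dt.
\end{equation*}
Integrating in $R$ over $[-a,a]$ and applying the Cauchy--Schwarz inequality on $[-a,a]\times[t_1,t_2]$ yields
\begin{equation*}
d^\e(t_1,t_2)\leq\left(\int_{t_1}^{t_2}\!\!\int_{-a}^{a}2F(v^\e)\,dR\,dt\right)^{\!1/2}\left(\int_{t_1}^{t_2}\!\!\int_{-a}^{a}(v^\e_t)^2\,dR\,dt\right)^{\!1/2}.
\end{equation*}

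The key step is then to control both factors using the uniform lower bound $\phi(R,t)\geq\phi_m>0$ on $[-a,a]\times[0,T]$, which is available because $a$ has been chosen so that $[-a,a]\subset(-\rho(t),1-\rho(t))$ uniformly in $t\in[0,T]$ (cfr.\ \eqref{eq:phi_m}). For the potential factor, the energy bound \eqref{eq:energy-Tbar} gives
\begin{equation*}
\int_{-a}^{a}F(v^\e(R,t))\,dR\leq\phi_m^{-1}\int_{-\rho(t)}^{1-\rho(t)}F(v^\e)\,\phi\,dR\leq\phi_m^{-1}\e\,E_\phi[v^\e,v^\e_t](t)\leq C\e,
\end{equation*}
so the first factor is at most $C\,\e^{1/2}(t_2-t_1)^{1/2}$. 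For the kinetic factor, integrating inequality \eqref{eq:E'(t)} from $t_1$ to $t_2$ and using the bound $\int_0^T J^\e(t)\,dt\leq h(\e)$ (actually the stronger $\int_{t_1}^{t_2} J^\e \leq h(\e)$ if we note that the positive part of $J^\e$ is dominated, or use the obvious extension from $0$ to $T$) yields
\begin{equation*}
\beta\e\int_{t_1}^{t_2}\!\!\int_{-\rho(t)}^{1-\rho(t)}(v^\e_t)^2\,\phi\,dR\,dt\leq E_\phi[v^\e,v^\e_t](t_1)-E_\phi[v^\e,v^\e_t](t_2)+h(\e),
\end{equation*}
and then restricting to $[-a,a]$ and invoking $\phi\geq\phi_m$ again gives
\begin{equation*}
\int_{t_1}^{t_2}\!\!\int_{-a}^{a}(v^\e_t)^2\,dR\,dt\leq\frac{1}{\beta\,\e\,\phi_m}\bigl(E_\phi[v^\e,v^\e_t](t_1)-E_\phi[v^\e,v^\e_t](t_2)+h(\e)\bigr).
\end{equation*}

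Multiplying the two factors, the powers of $\e$ cancel exactly and one obtains \eqref{eq:d-stima1} with a constant $C$ depending only on $T$ (through $\phi_m$ and $\beta$). The main conceptual obstacle is really already absorbed into the earlier work: the positive part of $J^\e$ was shown to be of order $o(1)$ precisely to make the integrated dissipation identity usable here; the only genuine verification required in this proof is the uniform lower bound $\phi_m>0$ on $[-a,a]$, which follows from Lemma \ref{lem:prop-rho} and the choice $a<\min\{\rho^o(T),1-\rho_0\}$.
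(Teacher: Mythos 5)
Your proof is correct and follows essentially the same route as the paper: bound $\partial_t\Psi(v^\e)=\sqrt{2F(v^\e)}\,v^\e_t$, apply Cauchy--Schwarz, control the potential factor by the uniform energy bound \eqref{eq:energy-Tbar} and the kinetic factor by integrating the dissipation inequality \eqref{eq:E'(t)}, with the $\e$ powers cancelling. The only cosmetic difference is that the paper keeps the weight $\phi$ inside the Cauchy--Schwarz over the full interval $(-\rho(t),1-\rho(t))$ and invokes $\phi\geq\phi_m$ once at the end, while you insert $\phi_m^{-1}\phi$ into each factor separately; even the slightly informal step $\int_{t_1}^{t_2}J^\e\,dt\leq h(\e)$, which you flag, is handled the same way in the paper.
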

\begin{proof}
Fix $t_1$, $t_2$ satisfying $0\leq t_1<t_2\leq T$.
Thanks to Cauchy--Schwarz inequality we obtain
\begin{align*}
	\int_{t_1}^{t_2}&\int_{-\rho(t)}^{1-\rho(t)}\left|\frac{d}{dt}\Psi(v^\e)\right|\phi\,dtdR =\int_{t_1}^{t_2}\int_{-\rho(t)}^{1-\rho(t)}\left|\Psi'(v^\e)v^\e_t\right|\phi\,dtdR\\
	&\;\;\leq\left(\int_{t_1}^{t_2}\int_{-\rho(t)}^{1-\rho(t)}\Psi'(v^\e)^2\phi\,dtdR\right)^{1/2}\left(\int_{t_1}^{t_2}\int_{-\rho(t)}^{1-\rho(t)}(v^\e_t)^2\phi\,dtdR\right)^{1/2}.
\end{align*}
Using that $\Psi'(v^\e)^2\phi=2F(v^\e)\phi\leq2\e E_\phi[v^\e,v^\e_t]$ and \eqref{eq:energy-Tbar}, we deduce that the energy is uniformly bounded in time and therefore
\begin{equation*}
	\int_{t_1}^{t_2}\int_{-\rho(t)}^{1-\rho(t)}\Psi'(v^\e)^2\phi\,dtdR\leq2\e\int_{t_1}^{t_2}E_\phi[v^\e,v^\e_t](t)\,dt\leq C\e(t_2-t_1),
\end{equation*}
where $C$ is a positive constant depending on $T$.
Moreover, integrating \eqref{eq:E'(t)} we infer
\begin{equation*}
	\int_{t_1}^{t_2}\int_{-\rho(t)}^{1-\rho(t)}(v^\e_t)^2\phi\,dtdR\leq (\beta\e)^{-1}\left(E_\phi[v^\e,v^\e_t](t_1)-E_\phi[v^\e,v^\e_t](t_2)+h(\e)\right).
\end{equation*}
Then, we get
\begin{align}
	\int_{t_1}^{t_2}\int_{-\rho(t)}^{1-\rho(t)}\left|\frac{d}{dt}\Psi(v^\e)\right|\phi\,dtdR\leq C(t_2-t_1)^{1/2}&\biggl(E_\phi[v^\e,v^\e_t](t_1) \nonumber\\
	& -E_\phi[v^\e,v^\e_t](t_2)+h(\e)\biggr)^{1/2}, \label{eq:ineq1}
\end{align}
for some $C>0$ depending on $T$.
On the other hand, we have
\begin{equation*}
	d^\e(t_1,t_2)\leq\int_{-a}^a\int_{t_1}^{t_2}\left|\frac{d}{dt}\Psi(v^\e)\right|\,dtdR,
\end{equation*}
and so, using \eqref{eq:phi_m} we infer
\begin{align}
	d^\e(t_1,t_2)&\leq(\phi_m)^{-1}\int_{-a}^a\int_{t_1}^{t_2}\left|\frac{d}{dt}\Psi(v^\e)\right|\phi\,dtdR\nonumber\\
	&\leq(\phi_m)^{-1}\int_{t_1}^{t_2}\int_{-\rho(t)}^{1-\rho(t)}\left|\frac{d}{dt}\Psi(v^\e)\right|\phi\,dtdR. \label{eq:ineq2}
\end{align}
Combining \eqref{eq:ineq1} and \eqref{eq:ineq2}, we end up with \eqref{eq:d-stima1} and the proof is complete.
\end{proof}
\begin{rem}\label{rem:Holder}
 Denoting by $d^\e(t)$ the function $d^\e(0,t)$ for $t\in[0,T]$, then
\begin{equation*}
	|d^\e(t_1)-d^\e(t_2)|\leq d^\e(t_1,t_2).
\end{equation*}
Thus, estimate \eqref{eq:d-stima1} implies $d^\e$ is an H\"older continuous function in $t$, uniformly in $\e$ for \eqref{eq:energy-Tbar}.
\end{rem}
The next step is to establish a lower bound for $E_\phi[v^\e,v^\e_t]$.
From Young inequality it follows that
\begin{equation}\label{eq:Young}
	\e\frac{v_R^2}{2}+\e^{-1}F(v)\geq\sqrt{2F(v)}|v_R|=\left|\frac{d}{dR}\Psi(v)\right|.
\end{equation}
The lower bound \eqref{eq:Young} is fundamental in the proof of the following result.
We use the notation
\begin{equation*}
	P_\phi[v^\e](t):=\int_{-\rho(t)}^{1-\rho(t)}\left[\e\frac{v^\e_R(R,t)^2}{2}+\e^{-1}F(v^\e(R,t))\right]\phi(R,t)\,dR.
\end{equation*}
\begin{prop}\label{prop:lowerbound}
Let $P_\phi[v^\e]$ defined above with $\rho$ satisfying \eqref{eq:rho}, $F$ satisfying \eqref{eq:ass-F-radial} and $\phi$ defined in \eqref{eq:phi}.
Fix $T\in(0,\T)$ and $a>0$ such that \eqref{eq:region-a} holds.
Then, there exist positive constants $\e_0, C_1, C_2$ (independent on $\e$) such that
\begin{equation}\label{eq:lowerbound}
		P_\phi[v^\e](t)\geq\phi(-C_1d^\e(t)-\e^{1/2},t)\cdot(c_0-C_2\e^{1/2}),
\end{equation}
for any $\e\in(0,\e_0)$ and $t\in[0,T]$ such that
\begin{equation}\label{eq:cond-lower}
	C_1d^\e(t)+\e^{1/2}\leq a.
\end{equation}
Here, the positive constant $c_0$ is the same of \eqref{eq:ass-energy}, $\e_0$ and $C_2$ depend on $T$, 
whereas the constant $C_1$ can be chosen independent on $\e$ and $T$.
\end{prop}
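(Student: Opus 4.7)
The starting point is Young's inequality \eqref{eq:Young}, which immediately yields
$$P_\phi[v^\e](t) \geq \int_{-\rho(t)}^{1-\rho(t)} \left|\frac{d}{dR}\Psi(v^\e(R,t))\right| \phi(R,t)\, dR.$$
My plan is to isolate the single transition of $v^\e(\cdot,t)$ from values near $-1$ to values near $+1$ within a short interval $[\alpha(t), \beta(t)]$ located near the origin, to use the shape of $\phi$ to bound it from below on this interval by its value at the left endpoint, and to recover the factor $c_0 - C_2\e^{1/2}$ from the integral of $|d\Psi(v^\e)/dR|$ across the transition.

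\textbf{Transition and integral bound.} Setting $\delta = \e^{1/2}$, I will pick $\alpha(t) < \beta(t)$ in $[-a, a]$ with $v^\e(\alpha(t),t) = -1+\delta$ and $v^\e(\beta(t),t) = 1-\delta$. Uniqueness of such a transition follows from the tight energy bound \eqref{eq:energy-Tbar}, since any additional full transition would contribute an extra quantity close to $c_0$ to $P_\phi$, contradicting $E_\phi \leq c_0 + y(\e)$ with $y=o(1)$. By the fundamental theorem of calculus and the nondegeneracy $F''(\pm 1)>0$ in \eqref{eq:ass-F-radial}, which implies $\sqrt{2F(s)} = O(|s\mp 1|)$ near $\pm 1$ and hence $\Psi(\pm 1 \mp \delta) = \Psi(\pm 1) + O(\e)$, I get
$$\int_{\alpha(t)}^{\beta(t)} \left|\frac{d\Psi(v^\e)}{dR}\right| dR \geq \Psi(1-\delta) - \Psi(-1+\delta) \geq c_0 - C\e.$$

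\textbf{Localization of the transition.} The crux of the proof is the estimate $\alpha(t), \beta(t) \in [-C_1 d^\e(t) - \e^{1/2}, C_1 d^\e(t) + \e^{1/2}]$. I would argue by contradiction: suppose $\alpha(t) > C_1 d^\e(t) + \e^{1/2}$. Then on $(0, \alpha(t))$ one has $v^\e(R,t) \leq -1+\delta$, so by monotonicity of $\Psi$, $\Psi(v^\e(R,t)) \leq \Psi(-1+\delta) = O(\e)$. On the other hand, the $L^1$-convergence $v_0^\e \to \bar v$ from \eqref{eq:ass-v0} combined with $\bar v \equiv 1$ on $(0,a)$ and the uniform boundedness of $v_0^\e$ yields, via dominated convergence, $\int_0^a |\Psi(v_0^\e) - c_0| \, dR \leq \e^{1/2} c_0/4$ for $\e$ sufficiently small. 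Therefore
$$d^\e(t) \geq \int_0^{\alpha(t)} \left|\Psi(v^\e(R,t)) - \Psi(v_0^\e(R))\right| dR \geq \alpha(t)\cdot\frac{c_0}{2},$$
which contradicts $\alpha(t) > C_1 d^\e(t) + \e^{1/2}$ provided $C_1 > 2/c_0$. Symmetric arguments handle the bound $\alpha(t) \geq -C_1 d^\e(t) - \e^{1/2}$ (using $v_0^\e \to -1$ on $(-a,0)$) and the location of $\beta(t)$.

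\textbf{Conclusion and main obstacle.} The properties \eqref{eq:prop-phi_R} and \eqref{eq:prop-phi2} imply that on $[\alpha(t), \beta(t)] \subset [-C_1 d^\e(t) - \e^{1/2}, C_1 d^\e(t)+\e^{1/2}]$ the function $\phi(\cdot,t)$ attains its minimum at the left endpoint, so $\phi(R,t) \geq \phi(-C_1 d^\e(t) - \e^{1/2}, t)$ throughout the interval. Combining with the integral estimate of Step 2 yields \eqref{eq:lowerbound}. The main technical obstacle is the localization step: the argument relies on absorbing the $o(1)$ error coming from the merely $L^1$-convergence \eqref{eq:ass-v0} into the $\e^{1/2}$ margin (so that the threshold $\e_0$ depends on the rate at which $v_0^\e \to \bar v$), and on treating carefully the degenerate case $d^\e(t)=0$, where the bound must still deliver the pure $\e^{1/2}$-correction; the constant $C_1$ is then fixed independently of $\e$ and $T$ as $C_1 = 2/c_0 + \text{margin}$.
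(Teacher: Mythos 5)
Your skeleton is the same as the paper's: start from Young's inequality \eqref{eq:Young}, trap the transition inside the window $A=(-C_1d^\e(t)-\e^{1/2},\,C_1d^\e(t)+\e^{1/2})$, bound $\phi$ below on $A$ by its value at the left endpoint using \eqref{eq:prop-phi2} and \eqref{eq:prop-phi_R}, and recover $c_0-C_2\e^{1/2}$ by integrating $\left|\tfrac{d}{dR}\Psi(v^\e)\right|$ across the transition. The last two steps of your plan are fine. The gap is in the middle: the existence of points $\alpha(t),\beta(t)\in[-a,a]$ with $v^\e(\alpha(t),t)=-1+\delta$ and $v^\e(\beta(t),t)=1-\delta$ is asserted, not proved; your uniqueness remark (no second transition because of \eqref{eq:energy-Tbar}) does not address existence, and nothing you have written excludes that at some time $t$ the profile stays away from $-1$ (or from $+1$) on all of $[-a,a]$. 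Producing such points is precisely the main content of the paper's proof: it splits $(-C_1d^\e(t)-\e^{1/2},0)$ into $I^\pm$ according to $v^\e\gtrless\tfrac14$, bounds $m(I^+)\le\{\Psi(1/4)-\Psi(0)\}^{-1}d^\e(t)$ using only that $v_0^\e<0$ there, deduces $m(I^-)\ge\e^{1/2}$ once $C_1\ge\{\Psi(1/4)-\Psi(0)\}^{-1}$, and then combines $\int_A\e^{-1}F(v^\e)\phi\,dR\le C_3$, the lower bound $\phi\ge\phi_m$ on $A$, and the nondegeneracy in \eqref{eq:ass-F-radial} to find $R_1\in I^-$ with $v^\e(R_1,t)\le-1+C\e^{1/4}$ (and symmetrically $R_2$). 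Note also that this mean-value/measure argument only yields values within $C\e^{1/4}$ of $\pm1$, not the exact values $\mp1\pm\e^{1/2}$ you prescribe.

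Your localization-by-contradiction step is also flawed in two specific places. First, from ``$\alpha(t)$ is a crossing point of the level $-1+\delta$'' and ``$\alpha(t)>C_1d^\e(t)+\e^{1/2}$'' you infer $v^\e(R,t)\le-1+\delta$ on all of $(0,\alpha(t))$; this does not follow for any natural choice of $\alpha(t)$ (first or last crossing) unless the profile is monotone in $R$, which is not known. Second, you need $\int_0^a|\Psi(v_0^\e)-c_0|\,dR\le\tfrac{c_0}{4}\e^{1/2}$, i.e.\ an $O(\e^{1/2})$ rate for the convergence \eqref{eq:ass-v0} (equivalently \eqref{eq:u0-ass}); but that hypothesis gives only $o(1)$ with no rate, and shrinking $\e_0$ cannot convert a quantity that may decay like $1/|\ln\e|$ into one dominated by $\e^{1/2}$ — the inequality would then fail for \emph{all} small $\e$, not just large ones. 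The paper's argument deliberately avoids any rate: on $I^+$ it compares $\Psi(v^\e(\cdot,t))\ge\Psi(1/4)$ with $\Psi(v_0^\e)\le\Psi(0)$, an order-one gap, and the factor $\e^{1/2}$ enters through the measure bookkeeping $m(I^+)+m(I^-)=C_1d^\e(t)+\e^{1/2}$ together with the choice of $C_1$, not through a quantitative closeness of the initial data. As written, then, both the existence and the localization of your transition points are unproven, and the proof needs to be reorganized along the lines above.
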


\begin{proof}
The first step of the proof is to prove the existence of two points $R_1$, $R_2$ in a neighborhood of $0$ such that
$v^\e$ is \emph{close} to $-1$ in $R_1$ and $v^\e$ is \emph{close} to $1$ in $R_2$.
To do this, let us define $$A:=(-C_1d^\e(t)-\e^{1/2},C_1d^\e(t)+\e^{1/2}),$$
and fix $t\in[0,T]$ such that assumption \eqref{eq:cond-lower} is satisfied, namely such that $A\subset(-a,a)$.
We claim that there exist $R_1,R_2\in A$ such that 
\begin{equation}\label{eq:claim-Ri}
	v^\e(R_1,t)\leq-1+C\e^{1/4}, \qquad \quad v^\e(R_2,t)\geq1-C\e^{1/4},
\end{equation}
for some constant $C>0$.
To start with, we prove the existence of $R_1$ such that the first inequality of \eqref{eq:claim-Ri} holds.
Let us introduce
\begin{align*}
	I^- & :=(-C_1d^\e(t)-\e^{1/2},0)\cap\left\{R: v^\e(R,t)<\tfrac14\right\},\\
	I^+ &:=(-C_1d^\e(t)-\e^{1/2},0)\cap\left\{R: v^\e(R,t)\geq\tfrac14\right\}.
\end{align*}
From the assumption \eqref{eq:cond-lower} and recalling the definition of $d^\e(t):=d^\e(0,t)$ 
where $d^\e(t_1,t_2)$ is defined in \eqref{eq:d^eps}, we deduce
\begin{align*}
	d^\e(t)&=\int_{-a}^a\big|\Psi(v^\e_0(R))-\Psi(v^\e(R,t))\big|\,dR\geq\int_{I^+}\big|\Psi(v^\e_0(R))-\Psi(v^\e(R,t))\big|\,dR\\
	&\geq\{\Psi(1/4)-\Psi(0)\}\,m(I^+),
\end{align*}
where we used that $v^\e(R,t)\geq\frac14$ in $I^+$ and $v^\e_0(R)<0$ if $\e$ is sufficiently small and $R\in I^{+}$ for \eqref{eq:ass-v0}.
Since $m(I^+)=C_1d^\e(t)+\e^{1/2}-m(I^-)$, we get
\begin{equation*}
	\left(\{\Psi(1/4)-\Psi(0)\}^{-1}-C_1\right)d^\e(t)\geq \e^{1/2}-m(I^-).
\end{equation*}
Taking $C_1\geq\{\Psi(1/4)-\Psi(0)\}^{-1}$, we obtain 
\begin{equation}\label{eq:m(I)}
	m(I^-)\geq\e^{1/2}.
\end{equation}
Moreover, we have
\begin{equation}\label{eq:C3}
	\min_{I^-}\phi\int_{I^-}\e^{-1}F(v^\e)\,dR\leq\int_A\e^{-1}F(v^\e)\phi\,dR\leq P_\phi[v^\e]\leq C_3,
\end{equation}
where in the last estimate we assumed without loss of generality that $P_\phi[v^\e]\leq C_3$ for some $C_3>0$.
Since $I^-\subset A\subset(-a,a)$, we can use \eqref{eq:phi_m} and the estimates \eqref{eq:m(I)}, \eqref{eq:C3} imply
the existence of $R_1\in I^-$ such that
\begin{equation}\label{eq:F-R1}
	F(v^\e(R_1,t))\leq C_3 (\phi_m)^{-1}\e^{1/2}.
\end{equation}
Using the assumptions on $F$ \eqref{eq:ass-F-radial}, we infer that there exists $\beta_0>0$ such that
\begin{equation}\label{eq:F-quadratic}
	\frac{F''(-1)}{4}(v^\e+1)^2\leq F(v^\e)\leq F''(-1)(v^\e+1)^2,
\end{equation}
for any $v^\e\in[-1-\beta_0,-1+\beta_0]$.
Assume $\e$ sufficiently small so that \eqref{eq:F-R1} and the fact that $v^\e(R_1,t)<\frac14$ in $I^-$ imply $v^\e\in[-1-\beta_0,-1+\beta_0]$.
Thus, from \eqref{eq:F-R1} and \eqref{eq:F-quadratic}, it follows that $v^\e(R_1,t)\leq-1+C\e^{1/4}$,
that is the first inequality of \eqref{eq:claim-Ri}.
The second one can be proved similarly. 
Now, we shall use \eqref{eq:claim-Ri} and \eqref{eq:Young} to complete the proof of \eqref{eq:lowerbound}. 
Precisely, we have 
\begin{align}
	P_\phi[v^\e](t)&\geq\int_{R_1}^{R_2}\left[\e\frac{v^\e_R(R,t)^2}{2}+\e^{-1}F(v^\e(R,t))\right]\phi(R,t)\,dR \notag \\
	&\geq\int_{R_1}^{R_2}\left|\frac{d}{dR}\Psi(v^\e(R,t))\right|\phi(R,t)\,dR \notag \\
	&\geq\min_A\phi\,\big|\Psi(v^\e(R_2,t))-\Psi(v^\e(R_1,t))\big|, \label{eq:Plower1}
\end{align}
using \eqref{eq:Young} in the second step.
For the last term, using \eqref{eq:claim-Ri} we infer
\begin{align*}
	\big|\Psi(v^\e(R_2,t))-\Psi(v^\e(R_1,t))\big|&\geq\int_{-1+C\e^{1/4}}^{1-C\e^{1/4}}\sqrt{2F(s)}\,ds\\
	&=c_0-\int_{-1}^{-1+C\e^{1/4}}\sqrt{2F(s)}\,ds\\
	&\qquad\quad-\int_{1-C\e^{1/4}}^1\sqrt{2F(s)}\,ds. 
\end{align*}
Using again the assumptions on $F$ \eqref{eq:ass-F-radial} and the upper bound for $F$ in \eqref{eq:F-quadratic}, 
we deduce that there exists $C>0$ depending on $T$ such that for any $\e$ sufficiently small
\begin{equation*}
	\int_{-1}^{-1+C\e^{1/4}}\sqrt{2F(s)}\,ds\leq\sqrt{2F''(-1)}\int_{-1}^{-1+C\e^{1/4}}|s+1|\,ds\leq C\e^{1/2}.
\end{equation*}
A similar results holds true for the last integral, and therefore we obtain that there exists $C>0$ depending on $T$ such that
\begin{equation}\label{eq:Plower2}
	\big|\Psi(v^\e(R_2,t))-\Psi(v^\e(R_1,t))\big|\geq c_0-C\e^{1/2}.
\end{equation}
It remains to study the term $\displaystyle\min_A\phi$ in \eqref{eq:Plower1}.
Since $\phi$ satisfies \eqref{eq:prop-phi2} and \eqref{eq:prop-phi_R}, we conclude that
\begin{equation}\label{eq:min_A}
	\min_A\phi=\phi(-C_1d^\e(t)-\e^{1/2},t).
\end{equation}
Substitute \eqref{eq:Plower2} and \eqref{eq:min_A} in \eqref{eq:Plower1} and the proof is complete.
\end{proof}
\begin{rem}
From \eqref{eq:lowerbound} and \eqref{eq:prop-phi3}, it follows that if $C_1d^\e(t)+\e^{1/2}$ is sufficiently small then
\begin{equation*}
	P_\phi[v^\e](t)\geq\left(1-K_T(C_1d^\e(t)+\e^{1/2})^2\right)\cdot(c_0-C_2\e^{1/2}).
\end{equation*}
Using the definition of the energy \eqref{eq:energy-v}, we deduce that there exists $C>0$ (depending on $T$ but not on $\e$ and $\tau$) such that
\begin{align*}
	E_\phi[v^\e,v^\e_t](t)&\geq P_\phi[v^\e](t)-\frac12\e^3\tau\rho'(t)^2\int_{-\rho(t)}^{1-\rho(t)}v^\e_R(R,t)^2\phi(R,t)\,dR\\
	&\geq P_\phi[v^\e](t)-C\alpha^{-1}\e^2\tau,
\end{align*}
where in the last passage we used \eqref{eq:rho'-limitato}, \eqref{eq:alfa} and \eqref{eq:energy-Tbar}.
Hence, we end up with the following lower bound
\begin{equation}\label{eq:lower}
	E_\phi[v^\e,v^\e_t](t)\geq c_0-C\e^{1/2}-Cd^\e(t)^2,
\end{equation}
which holds for any sufficiently small $\e$.
Actually, we have proved a property stronger than \eqref{eq:lower};
we have proved the following ``local'' bound
\begin{equation}\label{eq:lower2}
	\int_A \left[\e\frac{v^\e_R(R,t)^2}{2}+\e^{-1}F(v^\e(R,t))\right]\phi(R,t)\,dR\geq c_0-C\e^{1/2}-Cd^\e(t)^2,
\end{equation}
which we will use later.
\end{rem}
The next step is to prove the following fundamental result.
\begin{prop}\label{prop:main-v}
Let $(v^\e,v^\e_t)$ be a sufficiently regular solution to the IBVP \eqref{eq:v}-\eqref{eq:v-boundary}-\eqref{eq:v0},
where  $\tau$ satisfies \eqref{eq:tau-ugly}, $\rho$ satisfies \eqref{eq:rho},
$\phi$ is defined in \eqref{eq:phi}, and the initial data satisfy \eqref{eq:ass-energy}.
Moreover, fix $T\in(0,\T)$ and $a>0$ such that \eqref{eq:region-a} holds. 
Then, there exists a constant $C>0$ (depending on $T$, but not on $\e$)  such that 
\begin{equation}\label{eq:d^eps-estimate}
	d^\e(t)\leq C\max\left\{\e^{1/4},\sqrt{y(\e)}\right\}, \qquad \qquad \mbox{ for } \quad t\in[0,T],
\end{equation}
provided $\e\in(0,\e_0)$, where $\e_0$ is a small constant depending on $T$ such that in particular \eqref{eq:alfa} holds. 
\end{prop}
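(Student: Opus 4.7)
The plan is to combine the time-regularity estimate \eqref{eq:d-stima1} with the upper bound $E_\phi[v^\e_0,v^\e_1]\leq c_0+z(\e)$ from \eqref{eq:ass-energy} and the lower bound on the energy from Proposition~\ref{prop:lowerbound}. Taking $t_1=0$ and $t_2=t$ in \eqref{eq:d-stima1} and squaring yields
\begin{equation*}
	d^\e(t)^2 \leq CT\bigl(E_\phi[v^\e,v^\e_t](0)-E_\phi[v^\e,v^\e_t](t)+h(\e)\bigr).
\end{equation*}
Under the smallness condition \eqref{eq:cond-lower}, the lower bound \eqref{eq:lower} gives $E_\phi[v^\e,v^\e_t](t)\geq c_0-C\e^{1/2}-Cd^\e(t)^2$; together with the initial energy bound and the definition $y=\max\{z,h\}$, this produces the key recursive inequality
\begin{equation*}
	d^\e(t)^2 \leq CT\bigl(y(\e)+C\e^{1/2}+Cd^\e(t)^2\bigr).
\end{equation*}

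To give the smallness condition \eqref{eq:cond-lower} a rigorous meaning I would run a bootstrap based on the continuity of $d^\e(\cdot)$ noted in Remark~\ref{rem:Holder} and the initial value $d^\e(0)=0$: let $T^*\in(0,T]$ be the supremum of those $s$ for which $C_1 d^\e(r)+\e^{1/2}\leq a$ for every $r\in[0,s]$. On $[0,T^*]$ the recursive inequality is valid, and if $T$ is small enough that the coefficient of $d^\e(t)^2$ on the right is strictly less than one, that term can be absorbed into the left, giving
\begin{equation*}
	d^\e(t)\leq C\max\{\sqrt{y(\e)},\e^{1/4}\}, \qquad t\in[0,T^*].
\end{equation*}
Since this bound is $o(1)$ as $\e\to0$, for $\e$ sufficiently small the inequality $C_1d^\e(t)+\e^{1/2}<a$ holds strictly on $[0,T^*]$, which by continuity forces $T^*=T$.

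The main obstacle is that the constants in \eqref{eq:d-stima1} and \eqref{eq:lower} depend on $T$ through $\phi_m$, $\alpha$ and $K_T$, so the smallness assumption on $T$ cannot be guaranteed for every $T\in(0,\T)$. To reach an arbitrary $T<\T$ I would iterate on a partition $0=s_0<s_1<\dots<s_N=T$ with $s_{i+1}-s_i=T_0$ small enough for the previous argument to apply on each subinterval. Integrating \eqref{eq:E'(t)} gives $E_\phi[v^\e,v^\e_t](s_i)\leq E_\phi[v^\e,v^\e_t](0)+h(\e)$, so applying \eqref{eq:d-stima1} on $[s_i,t]$ and using the triangle-type bound $d^\e(t)\leq d^\e(s_i)+d^\e(s_i,t)$ in place of $d^\e(t)=d^\e(0,t)$ produces an inductive step of the form $d^\e(t)\leq 2d^\e(s_i)+CT_0^{1/2}(\sqrt{y(\e)}+\e^{1/4})$. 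Iterating $N=\lceil T/T_0\rceil$ times starting from $d^\e(s_0)=0$ yields the desired bound with a constant of order $2^N$, which is finite for each fixed $T<\T$; the bootstrap on \eqref{eq:cond-lower} then has to be carried out one subinterval at a time, using the $o(1)$ estimate obtained at step $i$ to validate the lower bound at step $i+1$.
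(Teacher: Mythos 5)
Your proposal is correct and follows essentially the same route as the paper: the key inequality from \eqref{eq:d-stima1} combined with \eqref{eq:ass-energy}, \eqref{eq:energy-Tbar} and the lower bound \eqref{eq:lower}, absorption of the $d^\e(t)^2$ term on a short time interval, and then an iteration over subintervals of fixed small length yielding a constant of order $2^N$ with $N\sim T/t_0$. The only cosmetic difference is that you enforce \eqref{eq:cond-lower} via a continuation argument on $T^*$, whereas the paper does the same job directly through the H\"older continuity of $d^\e$ (Remark \ref{rem:Holder}) together with the inductive bound at the left endpoint of each subinterval.
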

\begin{proof}
We shall prove inductively that if $\e\in(0,\e_0)$ then
\begin{equation}\label{eq:induzione}
	d^\e(t)\leq 2^N\max\left\{\e^{1/4},\sqrt{y(\e)}\right\}, \qquad \qquad \mbox{ for } \quad t\in[0,\min(Nt_0,T)],
\end{equation}
where $N$ is a positive integer, for a suitable choice of the constant $t_0>0$.
The constant $\e_0$ will depend on $N$, but the value of $t_0$ will be independent on $N$ and $\e$.
The estimate \eqref{eq:d^eps-estimate} clearly follows from \eqref{eq:induzione}, by taking $N=[T/t_0]+1$.

Let us start with the first step of the induction, namely let us show \eqref{eq:induzione} for $N=1$.
From \eqref{eq:d-stima1}, it follows that
\begin{equation*}
	d^\e(t)\leq Ct^{1/2}\bigl(E_\phi[v^\e_0,v^\e_1]-E_\phi[v^\e,v^\e_t](t)+h(\e)\bigr)^{1/2}.
\end{equation*}
In order to estimate the latter quantity, we use assumption \eqref{eq:ass-energy} and Proposition \ref{prop:lowerbound}.
As observed in Remark \ref{rem:Holder}, the function $d^\e$ is H\"older continuous in $t$, uniformly in $\e$;
hence, we can choose $t_0>0$ sufficiently small such that assumption \eqref{eq:cond-lower} is satisfied on $[0,t_0]$ for small values of $\e$. 
Combining \eqref{eq:ass-energy} and \eqref{eq:lower}, we obtain for all $t\in[0,t_0]$
\begin{equation*}
	d^\e(t)^2 \leq Ct\biggl(c_0+z(\e)-c_0+C\e^{1/2}+Cd^\e(t)^2+h(\e)\biggr),
\end{equation*}
where the positive constant $C$ depends on $T$, but not on $\e$.
Hence, we have
\begin{equation*}
	d^\e(t)^2\leq Ct\bigl(d^\e(t)^2+\e^{1/2}+y(\e)\bigr),
\end{equation*}
and as a consequence
\begin{equation*}
	(1-Ct_0)d^\e(t)^2\leq Ct_0\bigl( \e^{1/2}+y(\e)\bigr) ,
\end{equation*}
for all $t\in[0,t_0]$.
Choosing $t_0$ small enough so that $0<Ct_0/(1-Ct_0)\leq2$, we end up with \eqref{eq:induzione} in the case $N=1$.

Now, let us proceed with the inductive step.
Assume that \eqref{eq:induzione} holds for $N=1,2,\dots,k$ and that $kt_0\leq T$ (otherwise \eqref{eq:induzione} is trivial).
As in the initial step we use \eqref{eq:ass-energy} and Proposition \ref{prop:lowerbound}.
The condition \eqref{eq:cond-lower} is satisfied because 
\begin{equation*}
	d^\e(t)\leq|d^\e(t)-d^\e(kt_0)|+d^\e(kt_0), \qquad \quad \forall\,t\in[kt_0,\min((k+1)t_0,T)],
\end{equation*}
and we can use the H\"older continuity of $d^\e$ and the inductive hypothesis, by choosing $t_0$ and $\e$ sufficiently small.
Moreover, we have 
\begin{equation*}
	d^\e(t)\leq d^\e(kt_0,t)+d^\e(kt_0), \qquad \quad \forall\,t\in[kt_0,\min((k+1)t_0,T)],
\end{equation*}
and so
\begin{equation*}
	d^\e(t)^2\leq 2d^\e(kt_0,t)^2+2d^\e(kt_0)^2\leq2d^\e(kt_0,t)^2+2^{2k+1}\max\left\{\e^{1/2},y(\e)\right\}, 
\end{equation*}
for all $t\in[kt_0,\min((k+1)t_0,T)]$, where in the last inequality we used the inductive hypothesis.
Let us estimate the remaining term.
From \eqref{eq:d-stima1}, it follows that
\begin{equation*}
	d^\e(kt_0,t)^2\leq C(t-kt_0)\bigl(E_\phi[v^\e,v^\e_t](kt_0)-E_\phi[v^\e,v^\e_t](t)+h(\e)\bigr),
\end{equation*}
for all $t\in[kt_0,\min((k+1)t_0,T)]$.
Using \eqref{eq:energy-Tbar} and \eqref{eq:lower}, we obtain
\begin{equation*}
	d^\e(kt_0,t)^2\leq Ct_0\left(d^\e(t)^2+\e^{1/2} +y(\e)\right), \quad \quad \forall\,t\in[kt_0,\min((k+1)t_0,T)],
\end{equation*}
where the positive constant $C$ depends on $T$ but not on $\e$ and $t_0$.
Hence, we get 
\begin{equation*}
	d^\e(t)^2\leq Ct_0\left(d^\e(t)^2+\e^{1/2} + y(\e) \right)+2^{2k+1}\max\left\{\e^{1/2},y(\e)\right\}, 
\end{equation*}
for all $t\in[kt_0,\min((k+1)t_0,T)]$, and, as an easy consequence 
\begin{equation*}
	(1-Ct_0)d^\e(t)^2\leq2(Ct_0+2^{2k})\max\left\{\e^{1/2},y(\e)\right\}, 
\end{equation*}
for all $t\in[kt_0,\min((k+1)t_0,T)]$.
By choosing $t_0$ sufficiently small so that $Ct_0 \leq1/4$, we conclude that 
\begin{equation*}
	d^\e(t)^2\leq \left(\frac23+\frac432^{2k+1}\right)\max\left\{\e^{1/2},y(\e)\right\}\leq2^{2k+2}\max\left\{\e^{1/2},y(\e)\right\},
\end{equation*}
for all $t\in[kt_0,\min((k+1)t_0,T)]$, that is \eqref{eq:induzione} with $N=k+1$, and the proof is complete.
\end{proof}

\subsection{Proof of the main results}
In this subsection, we conclude the proof of Theorems \ref{thm:main} and \ref{cor:main}.
Before proceeding with the proof, let us make some comments.
Firstly, we remark that the condition \eqref{eq:energy-u} on the initial data allows us to make use of Proposition \ref{prop:main-v}.
Indeed, using the change of variables \eqref{eq:change-var} 
\begin{equation*}
	v_0^\e(R)=u_0^\e(R+\rho_0), \qquad \quad v_1^\e(R)=u_1^\e(R+\rho_0)+\nu_0\partial_ru_0^\e(R+\rho_0),
\end{equation*}
we obtain that \eqref{eq:u0-ass} and \eqref{eq:energy-u} are equivalent to \eqref{eq:ass-v0} and \eqref{eq:ass-energy}, in the case $\nu_0=0$.

Secondly, as we have already observed, the assumption \eqref{eq:energy-u} ensures that
the condition \eqref{eq:energialimitata} holds.
Moreover, the condition \eqref{eq:u-bounded} permits to remove the assumption \eqref{eq:ass-F-inf} on $F$ as pointed out 
in Remark \ref{rem:ass-F-inf} and then we can apply Theorem \ref{thm:compactness} to the solution 
of the problem \eqref{eq:ass-F-radial}-\eqref{eq:u-bounded} introduced in Section \ref{sec:radial}.

Observe also that since the function $h$ satisfies \eqref{eq:ass-h}, we have that $y=o(1)$ as $\e\to0$.
Substituting in \eqref{eq:d^eps-estimate} we obtain 
\begin{equation}\label{eq:d^eps->0}
	\lim_{\e\to0}d^\e(t)=0, \qquad \quad \mbox{ for any } t\in[0,T].
\end{equation}
Finally, let us recall the definition
\begin{equation*}
	\omega^0(r,t)=\left\{\begin{array}{ll} -1, \qquad r<\rho^o(t),\\
	+1, \qquad r>\rho^o(t),
	\end{array}\right.
\end{equation*}
with $\rho^o(t)=\sqrt{\rho_0^2-2(n-1)t}$. 
We have
\begin{equation}\label{eq:omega^e-omega0}
	|\omega^\e(r,t)-\omega^0(r,t)|=\left\{\begin{array}{ll} 2, \qquad \rho^o(t)\leq r\leq\rho(t),\\
	0, \qquad \quad \mbox{ otherwise},
	\end{array}\right.
\end{equation}
Therefore, from Lemma \ref{lem:epstau-rho} it follows that $\omega^\e\to\omega^0$ as $\e\to0$.
Now, we have all the tools to prove Theorem \ref{thm:main}.
\begin{proof}[Proof of Thereom \ref{thm:main}]
Fix $T\in(0,\T)$.
We will prove the property \eqref{eq:main} by contradiction.
If \eqref{eq:main} is not true, then there exists a sequence $\e_j$ and a constant $\delta>0$ such that 
\begin{equation}\label{eq:contradiction}
	\int_0^T\int_0^1\left|u^\e(r,t)-\omega^\e(r,t)\right|r^{n-1}\,dr\,dt\geq\delta.
\end{equation}
The assumptions of Theorem \ref{thm:compactness} are satisfied, then we can apply it to the solution $u^{\e_j}$ and we can state that
there exists a subsequence (still denoted $u^{\e_j}$) such that
\begin{equation}\label{eq:subsequence}
	\lim_{\e_j\to0} u^{\e_j}(r,t)=u^*(r,t) \qquad \quad \mbox{ for a.e. }\, (r,t)\in(0,1)\times(0,T),
\end{equation}
where $u^*$ takes only the values $\pm1$.
Regarding $\omega^\e$, from \eqref{eq:rho-rho0} and \eqref{eq:omega^e-omega0} it follows that 
\begin{equation}\label{eq:omega^e}
	\lim_{\e_j\to0} \omega^{\e_j}(r,t)=\omega^0(r,t) \qquad \quad \mbox{ for any }\, (r,t)\in(0,1)\times(0,T).
\end{equation}
Using the assumption \eqref{eq:u-bounded} and \eqref{eq:subsequence}-\eqref{eq:omega^e}, 
we may pass to the limit as $\e_j\to0$ in \eqref{eq:contradiction} and conclude that
\begin{equation}\label{eq:contradiction2}
	\int_0^T\int_0^1\left|u^*(r,t)-\omega^0(r,t)\right|r^{n-1}\,dr\,dt\geq\delta.
\end{equation}
We will show that $u^*\equiv\omega^0$ and so that \eqref{eq:contradiction2} can not be true.
Consider the functions $v^{\e_j}$ and $v^*$ corresponding to $u^{\e_j}$ and $u^*$ through the change of variables \eqref{eq:change-var}:
\begin{align*}
	&v^\e(R,t)=u^\e(R+\rho(t),t), & &(R,t)\in(-\rho(t),1-\rho(t))\times(0,T);  \\
	&v^*(R,t)=u^*(R+\rho^o(t),t),  & &(R,t)\in(-\rho^o(t),1-\rho^o(t))\times(0,T).
\end{align*}
From the assumptions on the initial data, it follows that the function $v^{\e_j}_0=v^{\e_j}_0(R)=v^{\e_j}(R,0)$ satisfies 
\begin{equation*}
	\lim_{\e_j\to0}v^{\e_j}_0(R)=\begin{cases} -1, \qquad R<0, \\ +1, \qquad R>0.\end{cases}
\end{equation*}
On the other hand, thanks to Proposition \ref{prop:main-v} (see \eqref{eq:d^eps->0}) we can state that
\begin{equation*}
	\lim_{\e_j\to0}\int_{-a}^a\big|\Psi(v^{\e_j}_0(R))-\Psi(v^{\e_j}(R,t))\big|\,dR=0.
\end{equation*}
Applying the dominated convergence theorem, we conclude that
\begin{equation}\label{eq:v*}
	v^*(R,t)=\begin{cases} -1, \qquad R<0, \\ +1. \qquad R>0,\end{cases} \qquad \quad \forall\, (R,t)\in(-a,a)\times(0,T).
\end{equation}
This implies that $u^*=\omega^0$ in $(r,t)\in(\rho^o(t)-a,\rho^o(t)+a)\times(0,T)$.
In order to handle values of $R$ outside of $(-a,a)$ we use \eqref{eq:energy-Tbar} and the ``local'' lower bound \eqref{eq:lower2};
setting $A=(-d^\e(t)-\e^{1/2},d^\e(t)+\e^{1/2})$, we have
\begin{equation}\label{eq:ineq-promain1}
	\int_{(-\rho(t),1-\rho(t))\backslash A}\left[\e\frac{v^\e_R(R,t)^2}{2}+\e^{-1}F(v^\e(R,t))\right]\phi(R,t)\,dR\leq C\left(y(\e)+\e^{1/2}+(d^\e)^2\right).
\end{equation}
In particular, we deduce
\begin{equation*}
	\int_{-\rho(t)}^{-d^\e(t)-\e^{1/2}}\left[\e\frac{v^\e_R(R,t)^2}{2}+\e^{-1}F(v^\e(R,t))\right]\phi(R,t)\,dR\leq C\left(y(\e)+\e^{1/2}+(d^\e)^2\right).
\end{equation*}
Using \eqref{eq:Young} and \eqref{eq:d^eps-estimate}, we infer
\begin{equation}\label{eq:ineq-promain2}
	\int_{-\rho(t)}^{-d^\e(t)-\e^{1/2}}\left|\frac{d}{dR}\Psi(v^\e(R,t))\right|\phi(R,t)\,dR\leq C\left(y(\e)+\e^{1/2}\right).
\end{equation}
Observe that the function $\phi$ defined in \eqref{eq:phi} is strictly positive and vanishes only at $-\rho$;
then, for any $\eta\in(0,\rho(T))$ we have
\begin{equation*}
	\phi(R,t)\geq \phi_m>0 \qquad \quad \forall\, (R,t)\in[-\rho(t)+\eta,0]\times[0,T],
\end{equation*}
where the constant $\phi_m$ can be chosen only depending on $\eta$.
For example, in view of \eqref{eq:alfa}, for $\e$ sufficiently small we can choose 
\begin{equation*}
	\phi_m=\eta^{\frac{n-1}{\alpha}}.
\end{equation*}
Therefore, using \eqref{eq:d^eps->0} and \eqref{eq:ineq-promain2}, we can say that for any fixed $t\in[0,T]$ 
and for any two points $R_1, R_2\in(-\rho^o(t),0)$, one has 
\begin{equation*}
	\phi_m\left|\Psi(v^\e(R_2,t))-\Psi(v^\e(R_1,t))\right|\leq C\left(y(\e)+\e^{1/2}\right),
\end{equation*}
whenever $\e$ is sufficiently small.
Since $\phi_m$ is strictly positive, by passing to the limit $\e_j\to0$, we obtain that $\Psi(v^*)$ is constant on $(-\rho^o(t),0)$.
From the definition of $\Psi$ \eqref{eq:Psi} and \eqref{eq:v*}, we conclude that $v^*(R,t)=-1$ for $(R,t)\in(-\rho^o(t),0)\times(0,T)$.
Similarly, we can prove that $v^*(R,t)=+1$ for $(R,t)\in(0,1-\rho^o(t))\times(0,T)$.
Indeed, using \eqref{eq:ineq-promain1} we can also say that
\begin{equation*}
	\int_{d^\e(t)+\e^{1/2}}^{1-\rho(t)}\left|\frac{d}{dR}\Psi(v^\e(R,t))\right|\phi(R,t)\,dR\leq C\left(y(\e)+\e^{1/2}\right).
\end{equation*}
Since $\phi$ is strictly positive in $[0,1-\rho(t)]$, $\rho$ satisfies \eqref{eq:rho-rho0} and $d^\e$ satisfies \eqref{eq:d^eps->0}, 
for any fixed $t\in[0,T]$ and for any two points $R_1, R_2\in(0,1-\rho^o(t))$, we get 
\begin{equation*}
	\left|\Psi(v^\e(R_2,t))-\Psi(v^\e(R_1,t))\right|\leq C\left(y(\e)+\e^{1/2}\right),
\end{equation*}
whenever $\e$ is sufficiently small.
Therefore, $\Psi(v^*)$ is constant on $(0,1-\rho^o(t))$ and from \eqref{eq:v*},
we have that $v^*(R,t)=+1$ for $(R,t)\in(0,1-\rho^o(t))\times(0,T)$.
In conclusion, returning to the original variables, we have shown that $u^*=\omega^0$ in $(0,1)\times(0,T)$.
This contradicts \eqref{eq:contradiction2} and the proof is complete.
\end{proof}
Now, we proceed with the proof of Theorem \ref{cor:main}.
\begin{proof}[Proof of Theorem \ref{cor:main}]
Fix $T\in(0,\T)$. 
From triangle inequality, it follows that
\begin{align*}
	\int_0^T\int_0^1\left|u^\e(r,t)-\omega^0(r,t)\right|r^{n-1}\,dr\,dt\leq&\int_0^T\int_0^1\left|u^\e(r,t)-\omega^\e(r,t)\right|r^{n-1}\,dr\,dt\\
	&+\int_0^T\int_0^1\left|\omega^\e(r,t)-\omega^0(r,t)\right|r^{n-1}\,dr\,dt.
\end{align*}
The first term of the right hand side of the previous inequality tends to $0$ as $\e\to0$ for \eqref{eq:main}.
For the other one, we use \eqref{eq:omega^e-omega0}:
\begin{equation*}
	\int_0^T\int_0^1\left|\omega^\e(r,t)-\omega^0(r,t)\right|r^{n-1}\,dr\,dt=\frac2n\int_0^T\!\!\left[\rho(t)^n-\rho^o(t)^n\right]\,dt
	\leq 2T\sup_{t\in[0,T]}\left|\rho(t)-\rho^o(t)\right|.
\end{equation*}
Therefore, using \eqref{eq:main} and \eqref{eq:rho-rho0} we obtain \eqref{eq:corollary}.
\end{proof}


\begin{thebibliography}{99}

\bibitem{Alf-Hil-Mat}
M. Alfaro, D. Hilhorst and H. Matano.
The singular limit of the Allen--Cahn equation and the FitzHugh--Nagumo system.
{\it J. Differential Equations}, {\bf 245} (2008), 505--565.

\bibitem{Allen-Cahn}
S. Allen and J. Cahn. 
A microscopic theory for antiphase boundary motion and its application to antiphase domain coarsening.
{\it Acta Metall.}, {\bf 27} (1979), 1085--1095.

\bibitem{Bron-Kohn}
L. Bronsard and R. Kohn.
On the slowness of phase boundary motion in one space dimension.
{\it Comm. Pure Appl. Math.}, {\bf 43} (1990), 983--997.

\bibitem{Bron-Kohn2}
L. Bronsard and R. Kohn.
Motion by mean curvature as the singular limit of Ginzburg--Landau dynamics.
{\it J. Differential Equations}, {\bf 90} (1991), 211--237.

\bibitem{Carr-Pego}
J. Carr and R. L. Pego.
Metastable patterns in solutions of $u_t=\varepsilon^2u_{xx}-f(u)$.
{\it Comm. Pure Appl. Math.}, {\bf 42} (1989), 523--576.

\bibitem{Carr-Pego2}
J. Carr and R. L. Pego.
Invariant manifolds for metastable patterns in $u_t=\varepsilon^2u_{xx}-f(u)$.
{\it Proc. Roy. Soc. Edinburgh Sect. A}, {\bf 116} (1990), 133--160.

\bibitem{Cat}
C. Cattaneo.
Sulla conduzione del calore.
{\it Atti del Semin. Mat. e Fis. Univ. Modena}, {\bf 3} (1948), 83--101.

\bibitem{Cazenave}
T. Cazenave and A. Haraux.
{\it An Introduction to Semilinear Evolution Equations},
(Clarendon Press, Oxford, 1998).

\bibitem{Ch-Gi-Go}
Y. G. Chen, Y. Giga and S. Goto.
Uniqueness and existence of viscosity solutions of generalized mean curvature flow equations.
{\it J. Differential Geom.} {\bf 33} (1991), 749--786.

\bibitem{Chen2}
X. Chen.
Generation and propagation of interfaces for reaction-diffusion equations.
{\it J. Differential Equations}, {\bf 96} (1992), 116--141.

\bibitem{Chen}
X. Chen.
Generation, propagation, and annihilation of metastable patterns.
{\it J. Differential Equations}, {\bf 206} (2004), 399--437.

\bibitem{demot-sch}
P. de Mottoni and M. Schatzman.
Geometrical evolution of developed interfaces.
{\it Trans. Amer. Math. Soc. }, {\bf 347} (1995), 1533--1589.

\bibitem{DunbOthm86}
S. R. Dunbar and H. G. Othmer.
On a nonlinear hyperbolic equation describing transmission lines, cell movement, and branching random walks.
In:  Othmer H.G. (ed.),
\textit{Nonlinear oscillations in biology and chemistry},
Lecture Notes in Biomath. 66, Springer-Verlag Berlin, 1986.

\bibitem{Ev-So-Sou}
L. C. Evans, H. M. Soner and P. E. Souganidis.
Phase transitions and generalized motion by mean curvature.
{\it Comm. Pure Appl. Math.}, {\bf 45} (1992), 1097--1123.

\bibitem{Ev-Sp}
L. C. Evans and J. Spruck. 
Motion of level set by mean curvature I.
{\it J. Differential Geom.}, {\bf 33} (1991), 635--681.

\bibitem{FolinoJHDE}
R. Folino.
Slow motion for a hyperbolic variation of Allen--Cahn equation in one space dimension.
{\it J. Hyperbolic Differ. Equ.}, {\bf 14} (2017), 1--26.

\bibitem{FolinoDIE}
R. Folino.
Slow motion for one-dimensional nonlinear damped hyperbolic Allen--Cahn systems.
{\it Differential Integral Equations}, to appear.

\bibitem{FolinoHyp2016}
R. Folino. 
Metastability for hyperbolic variations of Allen--Cahn equation. 
\emph{Proceedings of the XVI International Conference on Hyperbolic Problems: Theory, Numerics, Applications (Aachen, 2016)},
to appear.


\bibitem{FLM-CMS}
R. Folino, C. Lattanzio and C. Mascia.
Metastable dynamics for hyperbolic variations of the Allen--Cahn equation.
{\it Commun. Math. Sci.}, {\bf 15} (2017), 2055--2085.

\bibitem{Fusco-Hale}
G. Fusco and J. Hale.
Slow-motion manifolds, dormant instability, and singular perturbations.
{\it J. Dynamics Differential Equations}, {\bf 1} (1989), 75--94.

\bibitem{Giusti}
E. Giusti.
{\it Minimal surfaces and functions of bounded variations},
(Birkh\"auser, Basel, 1984).

\bibitem{HerPav}
L. Herrera and D. Pavon.
Hyperbolic theories of dissipation: Why and when do we need them?
{\it Phys. A}, {\bf 307} (2002), 121--130.

\bibitem{Hil-Nara}
D. Hilhorst and M. Nara.
Singular limit of a damped wave equation with a bistable nonlinearity. 
{\it SIAM J. Math. Anal.}, {\bf 46} (2014), 1701--1730.

\bibitem{Hillen}
T. Hillen.
Qualitative analysis of semilinear Cattaneo equations.
{\it Math. Models and Methods in Appl. Sci.}, {\bf 8} (1998), 507--519.

\bibitem{Holmes}
E. E. Holmes.
Are diffusion models too simple? A comparison with telegraph models of invasion.
{\it American Naturalist}, {\bf 142} (1993), 779--795.

\bibitem{JP89a}
D. D. Joseph and L. Preziosi.
Heat waves.
{\it Rev. Modern Phys.}, {\bf 61} (1989), 41--73.

\bibitem{JP89b}
D. D. Joseph and L. Preziosi.
Addendum to the paper: ``Heat waves'' [Rev. Modern Phys. 61 (1989) no. 1, 41--73].
{\it Rev. Modern Phys.}, {\bf 62} (1990), 375--391.

\bibitem{MenFedHor}
V. Mendez, S. Fedotov and W. Horsthemke.
{\it Reaction-Transport Systems: Mesoscopic Foundations, Fronts, and Spatial Instabilities},
(Springer-Verlag, Berlin, 2010).

\bibitem{Pazy}
A. Pazy.
{\it Semigroups of Linear Operators and Applications to Partial Differential Equations},
(Springer, New York, 1983).

\bibitem{RubSteKel}
J. Rubinstein, P. Sternberg and J. Keller.
Fast reaction, slow diffusion, and curve shortening.
{\it SIAM J. Appl. Math.}, {\bf 49} (1989), 116--133.

\bibitem{Sternberg}
P. Sternberg.
The effect of a singular perturbation on nonconvex variational problems.
{\it Arch. Rat. Mech. Anal.}, {\bf 101} (1988), 209--260.


\end{thebibliography}
\end{document}